\newtheorem{theorem}{Theorem}
\newtheorem{proposition}{Proposition}
\newtheorem{corollary}{Corollary}
\newtheorem{lemma}{Lemma}
\newcommand{\R}{{\mathbb R}}
\newcommand{\C}{{\mathbb C}}
\newcommand{\Z}{{\mathbb Z}}
\newcommand{\set}[2]{ \left\{ #1 \ \left| \ #2 \right. \right\}}
\newcommand{\ang}[1]{\left< #1 \right>}
\newcommand{\dH}{d {\mathcal H}}
\newcommand{\li}[1]{{}^{#1}\Sigma}
\newcommand{\ri}[1]{\Sigma^{#1}}
\newcommand{\lx}[1]{{}^{#1}}
\newcommand{\gen}{\operatorname{gen}}
\newcommand{\nr}{{\mathcal N}_{\mathcal R}}
\newcommand{\jac}[1]{\varphi^{#1}}
\newcommand{\en}{n_1}
\newcommand{\de}{d_1}
\newcommand{\wt}{W}
\newcommand{\tup}[2]{{\underline{#1}}_{|#2}}
\title{Local curvature of maximally nondegenerate Radon-like transforms}
\author{Philip T. Gressman\footnote{Partially supported by NSF grant DMS-2054602.}}
\date{\today}
\begin{document}
\maketitle

\begin{abstract}
This paper gives a complete geometric characterization in all dimensions and codimensions of those Radon-like transforms which, up to endpoints, satisfy the largest possible range of local $L^p \rightarrow L^q$ inequalities permitted by quadratic-type scaling. The necessary and sufficient curvature-type criterion is phrased in terms of an associated Newton-like diagram. In the case of averages over families of polynomial graphs, the curvature condition implies sharp endpoint estimates as well. The proof relies on the recently-developed multilinear Radon-Brascamp-Lieb testing criterion \cite{testingcond} and a refined version of differential inequalities for polynomials first appearing in work on the Oberlin affine curvature condition \cite{gressman2019}.
\end{abstract}

\tableofcontents

\section{Introduction}
\subsection{Formulation and main theorem}

The problem of establishing optimal or near-optimal $L^p$-improving inequalities for Radon-like transforms is one with a rich history going back to the early 1970s with work of Strichartz \cite{strichartz1970} and Littman \cite{littman1971}. Until the 1990s, most work on this problem relied heavily on Fourier and oscillatory integral methods; a key achievement of this approach was the identification of the Phong-Stein rotational curvature condition \cite{ps1986I} (which was itself informed by and equivalent to a nondegeneracy condition for Fourier Integral Operators formulated by Guillemin and Sternberg \cite{gs1977}*{Chapter VI, Section 6}). By considering the special case of convolution operators, it is clear that nonvanishing of the Phong-Stein rotational curvature sharply characterizes those Radon-like transforms which exhibit a maximal degree of smoothing in the scale of $L^2$ Sobolev spaces even in higher codimension (see Seeger and Wainger \cite{sw2003}). In this sense, it has been understood for essentially four decades what makes a Radon-like transform a ``best possible'' one in the sense of $L^2$ Sobolev improvement. Such methods have in some important situations (most notably, for averages over hypersurfaces) also yielded optimal $L^p$-improving inequalities for Radon-like operators, but the approach tends not to generalize, especially when averaging over submanifolds of intermediate dimension and codimension, which are geometrically unlike the endpoint cases of curves and hypersurfaces.

This paper provides an answer to the basic question of what geometric property is shared by those Radon-like transforms which exhibit ``best possible'' regularity improvement in the scale of $L^p$ spaces. While there have been several explorations of this class of operators over the past two decades, most notably by Ricci \cite{ricci1997} and D.~Oberlin \cite{oberlin2008} (see also \cite{gressman2019II}), there has until now been no coherent way to describe the local geometry of such objects. Previous approaches have also generally been limited to special combinations of dimension and codimension. Theorem \ref{characterthm}, by contrast, provides a sharp characterization of the so-called model operator geometry in all relevant dimensions and codimensions in terms of a local quantity computed using only second-order derivatives of the family of submanifolds. The condition involves computing a nontraditional sort of Newton diagram, which makes its study somewhat challenging. For this reason, there are also a number of results (e.g., Lemma \ref{newtonlemma}) dedicated to understanding basic properties like stability of nondegeneracy.

The formulation to be used here is as follows.
Let $\de ,n$, and $k$ be positive integers such that $n > k$. Suppose that $U$ is an open subset of $\R^n \times \R^{\de}$, that $\phi(x,t)$ is a smooth function of $(x,t) \in U$ with values in $\R^k$ and that $\gamma_t (x) := (t, \phi(x,t))$. Let $\en  := \de  + k$ and $d := n - k$ and consider the Radon-like transform
\begin{equation}
T f(x) := \int_{\R^{\de }} f(\gamma_t(x)) \eta(x,\gamma_t(x)) dt, \label{theop}
\end{equation}
which is well-defined \textit{a priori} for all nonnegative Borel-measurable functions $f$ on $\R^{\en}$. Here $\eta(x,y)$ serves as a continuous cutoff function which will be restricted so that the support of $\eta(x,\gamma_t(x))$ is compact and contains some distinguished point $(x_*,t_*) \in U$. It will also be assumed that the Jacobian matrix $D_x \phi$ (i.e., the matrix of first partial derivatives of $\phi$ with respect to the $x$ variables) has full rank $k$ for every $(x,t)$ belonging to the support of $\eta(x,\gamma_t(x))$. Although the particular form of $\gamma_t(x)$ as the graph of $\phi(x,t)$ is not one commonly adopted elsewhere in the literature, it is no serious limitation; any smooth mapping $\tilde \gamma_t(x)$ defined locally on some neighborhood of $(x_*,t_*) \in \R^n \times \R^{\de}$ and having values in $\R^{\en}$ can locally be written as $(t,\tilde \phi(x,t))$ after suitable changes of coordinates and a change of variables in $t$ so long as $t \mapsto \tilde \gamma_t(x)$ parametrizes a $\de$-dimensional submanifold of $\R^{\en}$ for fixed $x$. 

When studying the $L^p \rightarrow L^q$ mapping properties of such Radon-like transforms, a particular pair of exponents arise via Knapp-type examples as the best-possible $p$ and $q$ for any specific values of $n,k$, and $\de $, namely:
\begin{equation}
p_b := \frac{k d}{n \de } + 1 \text{ and } q_b := \frac{\en  d }{k \de } + 1. \label{bestexponents}
\end{equation}
For reference, the H\"{o}lder dual exponents are exactly
\begin{equation} p'_b = \frac{n \de  }{k d} + 1 \text{ and } q'_b = \frac{k \de }{\en d} +1. \label{bestdual} \end{equation}
In the special case when $n = \en$ and $k = 1$, $p_b = (n+1)/n$ and $q_b = n+1$; it has long been understood that the operator \eqref{theop} maps $L^{(n+1)/n}(\R^n)$ to $L^{n+1}(\R^n)$ precisely when the family of submanifolds indexed by $x \in \R^n$ and parametrized by $\gamma_t(x)$ for $t \in \R^{\de}$ for each fixed $x$ exhibits nonzero rotational curvature in the sense of Phong and Stein. However, when $k > 1$, nonvanishing rotational curvature is sufficient but not generally necessary.

The key local nondegeneracy condition which governs boundedness of \eqref{theop} at (and also near) the exponents \eqref{bestexponents} can be described in terms of a trilinear curvature functional.
Since it is assumed that Jacobian matrix $D_x \phi$ (arranged so that rows correspond to the coordinates of $\phi$ and columns to the coordinates of $x$)
 is rank $k$ at $(x,t)$, its kernel must always be $d$-dimensional; at any point $(x,t)$, let $z_1,\ldots,z_d$ be any orthonormal vectors in $\R^n$ which span the kernel of $D_x \phi$ there. For $i \in \{1,\ldots,\de\}$, $i' \in \{1,\ldots,k\}$, and $i'' \in \{1,\ldots,d\}$, let
 \[ Q_{ii'i''} := \sum_{\ell=1}^{n} z_{i''}^\ell \frac{\partial^2 \phi^{i'}}{\partial t^i \partial x^\ell } (x,t) \]
(upper indices as in $p^{i'}$ and $z_{i''}^\ell$ will be used represent the coordinates in the standard bases, e.g, $\phi := (\phi^1,\ldots,\phi^k)$ and $z_{i'} := (z^1_{i'},\ldots,z^{n}_{i'})$); to these coefficients we associate a trilinear functional $Q : \R^{\de } \times \R^k \times \R^d \rightarrow \R$ by means of the formula
\begin{equation} Q(u,v,w) := \sum_{i=1}^{\de } \sum_{i'=1}^k \sum_{i''=1}^d Q_{ii'i''} u^i v^{i'} w^{i''} \label{qdef}
\end{equation}
for all $u \in \R^{\de }$, $v \in \R^k$, and $w \in \R^d$. 
There is a key geometric object taking the form of a Newton-type diagram associated to $Q$ which, as it turns out, contains the essential information about whether \eqref{theop} satisfies a best-possible $L^{p_b} \rightarrow L^{q_b}$ inequality. (Note that the broader observation that $Q$ plays \textit{some} role in quantifying boundedness is not new; indeed, nonvanishing rotational curvature corresponds exactly to the situation when $Q(\cdot,v,\cdot)$ is a nondegenerate bilinear form on $\R^d$ for all $v \neq 0$.) To define it, a simple auxiliary definition is required.
Given a multiindex $\beta \in \Z_{\geq 0}^k$ and a sequence $\mathcal I := (i_1,\ldots,i_s)$ of integers belonging to $\{1,\ldots,k\}$, it will be said that $\beta$ counts $\mathcal I$ when for each $\ell \in \{1,\ldots,k\}$, there are exactly $\beta_\ell$ values of the index $j \in \{1,\ldots,s\}$ such that $i_j = \ell$; in other words, for each $\ell$, $\beta_\ell$ is simply the number of elements of the sequence $\mathcal I$ which equal $\ell$.

Given $Q$ as in \eqref{qdef},  let $N(Q)$ denote the convex hull in $[0,\infty)^{\de  + k + d}$ of the collection of all triples $(\alpha,\beta,\gamma) \in \Z_{\geq 0}^{\de } \times \Z_{\geq 0}^{k} \times \Z_{\geq 0}^d$ with $|\alpha| = |\beta| = |\gamma| \leq \min\{d,k\}$ (where $\alpha,\beta$, and $\gamma$ are regarded as multiindices) for which either $(\alpha,\beta,\gamma) = (0,0,0)$ or for which there exist $\mathcal I := (i_1,\ldots,i_s) \subset \{1,\ldots,k\}^s$ and $\mathcal J := (j_1,\ldots,j_s) \subset \{1,\ldots,d\}^s$ such that $\beta$ counts $\mathcal I$, $\gamma$ counts $\mathcal J$, and
\begin{equation} \left. \partial_\tau^{\alpha} \right|_{\tau=0} \det \begin{bmatrix} Q(\tau,e_{i_1}, e_{j_1}') & \cdots & Q(\tau,e_{i_1}, e_{j_s}') \\ \vdots & \ddots & \vdots \\ Q(\tau,e_{i_s}, e_{j_1}') & \cdots & Q(\tau,e_{i_s},e_{j_s'}) \end{bmatrix} \neq 0, \label{qdetdef} \end{equation}
where $\{e_i\}_{i=1}^k$ is the standard basis of $\R^k$,$\{e_j'\}_{j=1}^d$ is the standard basis of $\R^d$, and $\tau \in \R^{\de}$.
Then let
\begin{equation}
\begin{split}
\nr(Q) := \bigcap \Big\{ & N(Q') \ | \  Q'(u,v,q) = Q(O_1 u, O_2 v, O_3 w)  \text{ for orthogonal} \\ & \text{matrices } O_1, O_2, O_3 \text{ and all } u \in \R^{\de }, v \in \R^k, w \in \R^{d} \Big\}
\end{split} \label{nr0def}
\end{equation}
(i.e., $\nr(Q)$ is the intersection of all such $N(Q')$).
The functional $Q$ will be called nondegenerate when the point
\begin{equation} \Big(\overbrace{\frac{dk}{\de n},\ldots,\frac{dk}{\de n}}^{\de  \text{ copies}},\overbrace{\frac{d}{n},\ldots,\frac{d}{n}}^{k \text{ copies}},\overbrace{\frac{k}{n},\ldots,\frac{k}{n}}^{d \text{ copies}} \Big) \label{diagonal} \end{equation}
belongs to $\nr(Q)$. Any $Q$ for which \eqref{diagonal} does not belong to $\nr(Q)$ is called degenerate. The main result of this paper is as follows.
\begin{theorem}
Consider the transform $T$ given by \eqref{theop}. Let $(x_*,y_*) \in \R^n \times \R^{\en }$ have the property that $y_* = \gamma_{t_*}(x_*)$ for some $(x_*,t_*) \in U$, and suppose that the Jacobian matrix $D_x \phi$ is rank $k$ at $(x_*,t_*)$. Let $Q$ be the trilinear functional given by \eqref{qdef} at the point $(x_*,t_*)$.  Let $\Delta \subset [0,1]^2$ be the closed triangle with vertices $(0,0), (1,1)$ and $(1/p_b,1/q_b)$. \label{characterthm} 
\begin{enumerate}
\item If $Q$ is nondegenerate, and $\phi$ is a polynomial in $x$ and $t$, then there exists an $\eta$ of compact support which is nonvanishing at $(x_*,y_*)$ such that \eqref{theop} maps $L^{p_b}$ to $L^{q_b}$. By interpolation, $T$ maps $L^p(\R^{\en})$ to $L^q(\R^n)$ for all points $(1/p,1/q)$ belonging to the triangle $\Delta$.
\item If $Q$ is nondegenerate and $\phi$ is merely a smooth function of $x$ and $t$, then there exists an $\eta$ of compact support which is nonvanishing at $(x_*,y_*)$ such that \eqref{theop} maps $L^p(\R^{\en})$ to $L^q(\R^{n})$ for all pairs $(1/p,1/q)$ belonging to the interior of the triangle $\Delta$ (note that $T$ also trivially maps $L^p$ to itself for all $p \in [1,\infty]$).
\item If $Q$ is degenerate at and $\eta(x_*,y_*) \neq 0$, then \eqref{theop} fails to be bounded from $L^{p}(\R^{\en})$ to $L^{q}(\R^n)$ for all pairs $(1/p,1/q)$ belonging to some neighborhood of $(1/p_b,1/q_b)$. This neighborhood may be taken to depend only on $Q$.
\end{enumerate}
In short, for smooth $\phi$, nondegeneracy of $Q$ is necessary and sufficient for $L^{p} \rightarrow L^{q}$ boundedness for some set of pairs $(1/p,1/q)$ having $(1/p_b,1/q_b)$ in its closure. 
\end{theorem}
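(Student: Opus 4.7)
The plan is to handle the three parts in a natural order: first the necessary direction (3), then the polynomial endpoint (1), and finally deduce (2) from (1) by approximation and interpolation. Throughout, the main engine will be matching the combinatorial content of $\nr(Q)$ with the analytic content of the testing criterion from \cite{testingcond}.

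For part (3), the strategy is to extract Knapp-type counterexamples directly from the failure of \eqref{diagonal} to lie in $\nr(Q)$. If that point is missing from $\nr(Q)$, then by definition there exist orthogonal $O_1, O_2, O_3$ such that \eqref{diagonal} lies outside the convex body $N(Q')$ for $Q'(u,v,w) := Q(O_1 u, O_2 v, O_3 w)$. Separating hyperplane then supplies a nontrivial weight vector $(\lambda,\mu,\nu) \in \R^{\de} \times \R^k \times \R^d$ under which every vertex defined by the determinant condition \eqref{qdetdef} pairs against the diagonal in a definite sign. Rescale coordinates anisotropically using this weight so that the second-order piece of $\phi$ in the tilted orthonormal bases becomes the dominant term, producing a tube-on-tube family $(f_\delta, g_\delta)$ of indicator functions for which the ratio $\langle Tf_\delta, g_\delta\rangle / (\|f_\delta\|_{p_b} \|g_\delta\|_{q_b'})$ diverges as $\delta \to 0$. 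Because all of the needed inequalities persist under $C^2$-small perturbations of $Q$, the exponent neighborhood where boundedness fails depends only on $Q$ itself.

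For part (1), one invokes the multilinear Radon-Brascamp-Lieb testing criterion \cite{testingcond}: the desired $L^{p_b} \to L^{q_b}$ bound follows from the integrability of a fixed power of a certain Jacobian built from the $Q_{ii'i''}$, integrated against a weight determined by simultaneous intersection patterns of the fibers of $\gamma_t$. The relevant Jacobian is precisely a sum, indexed over choices of $s = \min\{d,k\}$ fibers and $s$ directions, of determinants of the form \eqref{qdetdef} for suitable $\mathcal I, \mathcal J$. The nondegeneracy hypothesis --- that the diagonal point \eqref{diagonal} lies in $\nr(Q)$, i.e.\ in every $N(Q')$ --- is exactly what is needed to guarantee that after \emph{any} orthonormal reorganization of the kernel basis $z_1,\ldots,z_d$ (and of the $\R^k$- and $\R^{\de}$-factors), the leading monomial of this Jacobian in local coordinates carries the full diagonal exponent. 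For polynomial $\phi$, the refined differential inequalities from \cite{gressman2019} then upgrade the nonvanishing of a suitable determinant coefficient to a pointwise lower bound on the Jacobian on a set of essentially full measure, which in turn makes the Brascamp-Lieb testing integral finite. The finiteness of this testing integral, together with the form of $(p_b, q_b)$ forced by \eqref{bestexponents}, yields the endpoint inequality.

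Part (2) will follow from (1) by a Taylor freeze, rescaling and complex interpolation. Center coordinates at $(x_*, t_*)$ and let $P_N$ denote the degree-$N$ Taylor polynomial of $\phi$ for $N$ sufficiently large that the associated trilinear form, and indeed the entire Newton object $\nr(\cdot)$, is unchanged at the base point; nondegeneracy is a finite-order condition on $Q$, so $P_N$ inherits it. Part (1) then yields an $L^{p_b} \to L^{q_b}$ bound for the polynomial model $T_{P_N}$. For the remaining smooth case, rescale by $\delta$ around $(x_*, t_*)$ so that $\phi - P_N$ is $C^2$-small of order $\delta$; the resulting perturbative piece admits only trivial $L^p \to L^p$ control but this suffices when combined with the fixed polynomial endpoint via complex interpolation against the trivial $L^1 \to L^1$ and $L^\infty \to L^\infty$ bounds for $T$ itself. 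This produces inequalities at every $(1/p, 1/q)$ in the open triangle $\Delta$, losing only the endpoint.

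The main obstacle will be part (1), specifically the translation step between $\nr(Q)$ and the Brascamp-Lieb testing integral. The combinatorial object $\nr(Q)$ is defined via an intersection over orthogonal group actions on the three coordinate factors, while the testing criterion of \cite{testingcond} demands a uniform lower bound on a specific Jacobian arising from $s$-fold products of $T$. Making this identification precise --- and ensuring that the polynomial differential inequalities from \cite{gressman2019} apply exactly to the determinants appearing in \eqref{qdetdef} --- is the analytic heart of the argument and is where the technical effort will concentrate.
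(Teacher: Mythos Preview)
Your outlines for parts (1) and (3) are broadly aligned with the paper's approach: part (3) does proceed via a separating hyperplane argument (the paper packages this as the Hilbert--Mumford-type criterion in Lemma~\ref{newtonlemma}, part 2) followed by Knapp-type examples, and part (1) does reduce to the testing integral from \cite{testingcond} and uses the differential inequalities from \cite{gressman2019} to produce the required sublevel estimate. Your description of part (1) is somewhat imprecise about \emph{how} the differential inequalities interface with the testing integral---the paper constructs adapted vector fields (Theorem~\ref{mainineq}) and uses them to build a basis in which the testing quantity is controlled by $\mathcal{Q}$ via the identity \eqref{compare}---but the skeleton is right.

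Part (2), however, has a real gap. Your proposal is to fix a single degree-$N$ Taylor polynomial $P_N$, rescale by $\delta$ so that $\phi - P_N$ is $C^2$-small, and then interpolate the polynomial endpoint against ``trivial $L^p \to L^p$ control'' of the perturbative piece. This does not close. The difficulty is that closeness of $\phi$ and $P_N$ in $C^2$ (or any $C^m$) does \emph{not} make $T_\phi - T_{P_N}$ small as an operator on general $L^p$ functions; such closeness only buys you something when applied to functions that are themselves smooth at the matching scale. A single fixed $N$ cannot handle all scales simultaneously, and you give no mechanism for passing from the rescaled operators back to $T$ itself.

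The paper's route is genuinely different and relies on an ingredient you do not mention: an $L^2$ Sobolev smoothing estimate coming from the H\"ormander condition via \cite{cnsw1999}. Concretely, the paper performs a Littlewood--Paley decomposition $T = \sum_j T R_j$ and, for each scale $j$, approximates $\phi$ by a polynomial $\phi_j$ of degree $\approx 2^{\epsilon j}$ chosen via Jackson's theorem so that $|\gamma_t - \gamma_t^j| \leq 2^{-j}$. This gives $\|T R_j\|_{p_b \to q_b} \lesssim 2^{C\epsilon j}$ (with explicit polynomial degree dependence tracked through Theorem~\ref{mainineq}). That growth is then beaten by the decay $\|T R_j\|_{2 \to 2} \lesssim 2^{-\epsilon' j}$ from Proposition~\ref{cancelprop}, and interpolation between the two makes $\sum_j T R_j$ converge at every interior point of $\Delta$. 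The CNSW Sobolev input is not optional here---without it there is nothing to sum against---and it is exactly what your sketch is missing.
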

While it seems likely that nondegeneracy of $Q$ is both necessary and sufficient for full $L^{p_b} \rightarrow L^{q_b}$ boundedness of \eqref{theop} even in the smooth case, the robust characterization of boundedness of \eqref{theop} that appears in \cite{testingcond} relies very heavily on the algebraic properties of polynomials. Consequently, resolving the endpoint question in the smooth case of Theorem \ref{characterthm} will likely require a substantially different approach than the one used here.

\subsection{Summary and outline}

The proof of Theorem \ref{characterthm} includes three major ingredients. The first is the recent characterization, appearing in \cite{testingcond}, of boundedness of certain multilinear Radon-like transforms on an important scaling line which, crucially, passes through $(1/p_b,1/q_b)$.  The key result from \cite{testingcond}, which will be used as a black box here, is recorded for convenience in Section \ref{importsec}. This is one of two points in the argument at which there are important distinctions between polynomial and nonpolynomial mappings.

Although the current paper does directly not rely on any external results from the field of Geometric Invariant Theory, the overall strategy of the proof of Theorem \ref{characterthm} was heavily influenced by ideas originating from that area and there are a number of existing results and tools which could have been used here. Readers familiar with GIT will recognize the connection of \eqref{hilbertmumford} to the Hilbert-Mumford criterion. Most of the GIT-inspired work is contained in Section \ref{linalgsec} and the connections are strongest in Section \ref{nondegensec}, the centerpiece of which is Lemma \ref{newtonlemma} concerning quantitative characterizations of nondegeneracy.

The second major component of the proof of Theorem \ref{characterthm} is a modification of certain tools first appearing in \cite{gressman2019}, developed to study D.~Oberlin's affine Hausdorff measure and related curvature condition \cite{oberlin2003}. 
These tools are the subject of Theorem \ref{mainineq}, which appears and is proved in Section \ref{gdineqsec}. The resulting inequalities can, in some sense, be understood as a geometric generalization of the more common notion of polynomial type functions (see \cite{ps1994II}) which bound derivatives of ``nice'' functions in terms of simple scale factors and the supremum of the functions themselves.
Theorem \ref{mainineq} is an update of the more general differential inequalities from \cite{gressman2019} and features two main innovations: the dependence of various constants on degrees of various polynomials involved is made explicit (a necessary component of the passage to smooth functions), and it is posed in the category of Nash functions, which are in some sense the most general functions for which a quantitative version of Theorem \ref{mainineq} can hold. These results and their proofs are recorded in Section \ref{gdineqsec}. The proof that nondegeneracy implies boundedness for polynomial mappings is then completed in Section \ref{suffsec}.

Necessity of nondegeneracy is established in Section \ref{nesssec}. The strategy there is essentially a very careful Knapp-type analysis of testing on suitable characteristic functions. In particular, it is established in Section \ref{tomodelsec} that every Radon-like transform which satisfies some $L^p \rightarrow L^q$ which is nearly best-possible is closely related to a simpler operator for which its $\phi$ is a bilinear function of $x$ and $t$. Section \ref{modelsec} then establishes that degeneracy in the model case precludes boundedness near the best-possible exponents.

The third major component of the proof of Theorem \ref{characterthm} is contained in Section \ref{npasec} and concerns the passage from polynomial $\phi$ to smooth $\phi$, which is accomplished via Jackson's theorem and approximation. The same sort of idea appears in earlier work of Bourgain and Guth \cite{bg2011} and Zahl \cite{zahl2012}, though the precise contexts are rather different. In the case of the present paper, the approximation is accomplished via a simple Littlewood-Paley decomposition of $T$ which allows one to make precise the rough idea that when $T$ acts on functions of scale $2^{-j}$, one is essentially free to perturb the mapping $\gamma_t(x)$ within an error of $2^{-j}$ as well. Summing the Littlewood-Paley pieces at the endpoint $L^{p_b} \rightarrow L^{q_b}$ is essentially impossible because polynomial approximation is accompanied by operator norms which diverge to infinity as the scales become increasingly fine. However, it is possible to sum the pieces with only infinitesimal loss at nearby pairs of exponents by virtue of a general $L^2$ Sobolev inequality appearing in the seminal work of Christ, Nagel, Stein, and Wainger \cite{cnsw1999}.

\section{Notation and results from \cite{testingcond}}
\subsection{Notational conventions}
As mentioned in the introduction, superscripts are generally reserved for coordinates of vectors in the standard bases, e.g., $x := (x^1,\ldots,x^n)$ when $x \in \R^n$. Throughout the proofs that follow, standard multiindex notation is also used extensively: given $\alpha := (\alpha^1,\ldots,\alpha^n) \in \Z_{\geq 0}^n$, one defines $|\alpha| := \alpha^1 + \cdots + \alpha^n$, $\alpha! := \alpha^1! \cdots \alpha^n!$, $x^\alpha := (x^1)^{\alpha^1} \cdots (x^n)^{\alpha^n}$ for any $x := (x^1,\ldots,x^n) \in \R^n$, and \[\partial^\alpha f(x) := \frac{\partial^{|\alpha|}f}{ \partial (x^1)^{\alpha_1} \cdots (x^n)^{\alpha^n}}(x) \]
when $x \in \R^n$.

A significant portion of the labor to come also involves working with highly multilinear objects. To vastly simplify the notation, the following nonstandard summation convention will be used. When $F$ is any real-valued quantity that depends on a $k$-tuple of objects, we make the definition that
\begin{equation} \sum_{i = 1}^{n} F(\tup{\omega_i}{k}) := \sum_{i_1=1}^n \cdots \sum_{i_k=1}^n F(\omega_{i_1},\ldots,\omega_{i_k}). \label{tupdef} \end{equation}
In other words, when there is an underlined expression involving a variable of summation, the summation variable should be replaced by a $k$-tuple of summations (or whatever length of tuple is indicated to the right of the underline) and the underlined expression should be understood as a stand-in for a sum over a $k$-tuple where each entry of the tuple depends on its own summation index.

The notation $A \lesssim B$ will used to indicate the existence of a finite positive constant $C$ such that $A \leq C B$ uniformly over the parameters of $A$ and $B$ except as noted; likewise $A \approx B$ will indicate that $A \lesssim B$ and $B \lesssim A$.

\subsection{Testing conditions for Radon-like operators}
\label{importsec}
This section reviews key results from \cite{testingcond} which will be used frequently.

Suppose $\Omega \subset \R^{n} \times \R^{\en }$ is an open set whose points have the form $(x,y)$ for $x \in \R^n$ and $y \in \R^{\en }$. Let $\pi : \Omega \rightarrow \R^{k}$ for some $k \leq \min\{n,\en \}$ and suppose that $\pi$ is smooth.  The symbol $D_x \pi$ will denote the Jacobian matrix of $\pi$ with respect to $x$ only, i.e., $D_x \pi |_{(x,y)} := {\partial \pi}/{\partial x} |_{(x,y)}$ with rows of $D_x \pi$ corresponding to entries of $\pi$ and columns corresponding to the directions of differentiation, and similarly for $D_y \pi$. The object $d_x \pi$ is an alternating $k$-linear functional on $\R^n$ given by
\begin{equation}
d_x \pi |_{(x,y)} (v_1,\ldots,v_k) := \det \left[ D_x \pi |_{(x,y)} v_1, \ldots, D_x \pi |_{(x,y)} v_k \right],
\end{equation}
and analogously for $d_y \pi$, which is an alternating $k$-linear functional on $\R^{\en }$. Given any collection $\omega$ of vectors $\omega_1,\ldots,\omega_n \in \R^n$, 
\begin{equation} \begin{split} ||d_x \pi(x,y)||_\omega & := \left[ \frac{1}{k!} \sum_{i=1}^n \left| d_x \pi (\tup{\omega_i}{k}) \right|^2 \right]^\frac{1}{2} \\ & = \left[ \frac{1}{k!} \sum_{i_1=1}^n \cdots \sum_{i_k=1}^n \left| d_x \pi|_{(x,y)} ( \omega_{i_1},\ldots,\omega_{i_k}) \right|^2 \right]^\frac{1}{2} \end{split} \label{typicalsum} \end{equation}
and likewise set
\[ ||d_y \pi(x,y)||_\omega := \left[ \frac{1}{k!} \sum_{i=1}^{\en} \left|  d_y \pi |_{(x,y)} (\tup{\omega_i}{k}) \right|^2 \right]^\frac{1}{2} \]
for any $\en$-tuple of vectors $\omega_1,\ldots,\omega_{\en}$ in $\R^{\en}$. The notation $||d_x \pi(x,y)||$ and $||d_y \pi(x,y)||$ indicates that $\omega$ should be taken to be the tuple of standard basis vectors.

Any triple $(\Omega,\pi,\Sigma)$ is called a smooth incidence relation on $\R^{n} \times \R^{\en}$ of codimension $k$ when $\Omega \subset \R^{n} \times \R^{\en }$ is open, $\pi : \Omega \rightarrow \R^{k}$ is smooth, and
\[ \Sigma = \set{ (x,y) \in \Omega}{ \pi(x,y) = 0, ||d_x \pi(x,y)||, ||d_y \pi(x,y)|| > 0}. \]
The notation $\li{x}$ and $\ri{y}$ indicates slices of $\Sigma$ with fixed $x$ and $y$, respectively:
\begin{align*}
\li{x} & := \set{y \in \R^{\en }}{ (x,y) \in \Sigma} \text{ and }
\ri{y}  := \set{x \in \R^{n\vphantom{'}}}{(x,y) \in \Sigma},
\end{align*}
and on each slice $\li{x}$ and $\ri{y}$, $\sigma$ denotes what is called coarea measure, given by the formulas
\[ \int_{\li{x}} \! f d \sigma := \int_{\li{x}} \! \! f(y) \frac{\dH^{\de }(y)}{||d_y \pi (x,y)||} \text{ and } \int_{\ri{y}} \! f d \sigma := \int_{\ri{y}} \! \! f(x) \frac{\dH^{d}(x)}{||d_x \pi (x,y)||}, \]
where $\de  = \en  -k$, $d = n-k$, and $\dH^{s}$ is the usual $s$-dimensional Hausdorff measure.  

The main result needed from \cite{testingcond} is the following, which is the linear case of \cite{testingcond}*{Theorem 4}.
\begin{theorem}[cf. Theorem 4 of \cite{testingcond}] \label{fromtest}
Let $(\Omega,\pi,\Sigma)$ be a smooth incidence relation on $\R^n \times \R^{\en }$ and suppose that $\pi : \R^n \times \R^{\en } \rightarrow \R^k$ is polynomial. The Radon-like transform
\[ T f(x) := \int_{\li{x}} f(y) \eta(x,y) d \sigma(y) \]
for continuous $\eta$ satisfies an $L^p \rightarrow L^{pn/k}$ inequality whenever there exists a finite constant $C$ such that
\begin{equation} \int_{\li{x}} \frac{|\eta(x,y)|^{p'}d \sigma(y)}{||d_x \pi(x,y)||_\omega^{p'-1}}   \leq C \label{boundthisnotweak} \end{equation}
for all $x \in \R^n$, all $n$-tuples $\{\omega_i\}_{i=1}^n$ with $|\det (\omega_1,\ldots,\omega_n)| = 1$, and all $\epsilon > 0$.
The norm bound for $T$ is at most some constant depending on $n,\en$, and $k$ times some powers (depending on $n,\en,$ and $k$) of $C$ and the product of degrees of the polynomials $\pi^i$, $i \in \{1,\ldots,k\}.$
\end{theorem}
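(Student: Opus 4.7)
The plan is to reduce the $L^p \to L^{pn/k}$ estimate for $T$ to a Radon-Brascamp-Lieb type slicing inequality, and then to exploit the polynomial structure of $\pi$ to extract the claimed degree-dependent constants. The reduction is via Hölder's inequality applied pointwise on each slice: for any frame $\omega = (\omega_1, \ldots, \omega_n)$ with $|\det(\omega_1, \ldots, \omega_n)| = 1$,
\[
|Tf(x)| \leq \left(\int_{\li{x}} \frac{|\eta(x,y)|^{p'}}{\|d_x\pi(x,y)\|_\omega^{p'-1}} d\sigma(y)\right)^{1/p'} \left(\int_{\li{x}} |f(y)|^p \|d_x\pi(x,y)\|_\omega d\sigma(y)\right)^{1/p},
\]
since $(p'-1)p/p' = 1$. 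The first factor is at most $C^{1/p'}$ by the testing hypothesis \eqref{boundthisnotweak}. Raising to the $q = pn/k$ power and integrating in $x$, the claim reduces to a slicing inequality
\[
\int_{\R^n} \left(\int_{\li{x}} g(y) \|d_x\pi(x,y)\|_{\omega(x)} d\sigma(y)\right)^{n/k} dx \lesssim \left(\int_{\R^{\en}} g(y) dy\right)^{n/k}
\]
for all nonnegative $g = |f|^p$ and some suitable $x$-dependent choice of frame $\omega(x)$ (admissible because the testing condition is uniform in $\omega$).

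The second step is to recognize the displayed inequality as a multilinear Radon-Brascamp-Lieb inequality on the critical scaling line. Via the coarea formula, the weighted slicing measure $\|d_x\pi\|_\omega d\sigma$ can be rewritten as a pullback of Lebesgue measure under an explicit polynomial map, and the exponent $n/k$ is the one forced by scaling of the ambient geometry. A natural route of proof is then a Guth-style polynomial partitioning argument combined with induction on scales: a multilinear broad/narrow split handles the broad part by a Kakeya-type multilinear input, while the narrow part is handled inductively on cells cut out by a polynomial of controlled degree. The polynomial hypothesis on $\pi$ enters crucially through the semialgebraic geometry needed to bound cell complexities and through a careful tally of degrees, yielding a final bound polynomial in $\prod_i \deg \pi^i$.

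The main obstacle is the adaptive choice of the frame $\omega(x)$. The testing hypothesis is uniform over all unit-determinant frames, but the Hölder step above gives useful information only when $\omega$ is essentially matched to the singular-value directions of $D_x\pi|_{(x,y)}$, and these vary with $y$ as $y$ ranges over $\li{x}$. The resolution is to choose $\omega(x)$ by a John-ellipsoid/singular-value analysis of $D_x\pi$ at a well-chosen base point of each slice, and then to partition $\li{x}$ into semialgebraic pieces on which the chosen frame remains within a uniform constant of optimal. Controlling the number and algebraic complexity of these pieces is where the polynomial degree of $\pi$ enters quantitatively, and careful bookkeeping through the partition step is what produces the polynomial dependence on $\prod_i \deg \pi^i$ in the final operator norm.
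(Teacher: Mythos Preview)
This theorem is not proved in the present paper: it is explicitly recorded in Section~\ref{importsec} as the linear case of Theorem~4 of \cite{testingcond} and is used here as a black box. There is therefore no proof in this paper to compare your proposal against.

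That said, a few comments on your sketch. Your H\"older step is correct and is indeed the standard opening move: with weight $w = \|d_x\pi\|_\omega$ one has $(p'-1)/p' = 1/p$, so the exponents balance and the testing condition absorbs the first factor. The reduction to the slicing inequality
\[
\int_{\R^n} \left(\int_{\li{x}} g(y)\,\|d_x\pi(x,y)\|_{\omega}\,d\sigma(y)\right)^{n/k} dx \lesssim \left(\int_{\R^{\en}} g\right)^{n/k}
\]
is the right target, and you correctly flag the central difficulty that the optimal frame depends on both $x$ and $y$, not just $x$. However, the remainder of your outline---``Guth-style polynomial partitioning,'' ``broad/narrow split,'' ``John-ellipsoid analysis at a well-chosen base point,'' ``semialgebraic pieces on which the frame remains near-optimal''---is a list of plausible ingredients rather than an argument. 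In particular, the frame-adaptation problem is not resolved by choosing a single basepoint per slice: the singular directions of $D_x\pi$ can rotate substantially across $\li{x}$, and controlling the number of semialgebraic pieces with bounded frame-distortion in a way that interacts correctly with the $n/k$ exponent is precisely the hard content of \cite{testingcond}. Your sketch does not indicate how the induction closes or why the loss is polynomial in $\prod_i \deg \pi^i$ rather than, say, exponential. If you intend to reprove the cited result rather than invoke it, you would need to supply that mechanism; otherwise the honest move is simply to cite \cite{testingcond} as the paper does.
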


A few small but useful reductions are in order when working specifically within the context of \eqref{theop}. The first is that one can fix the defining function $\pi$ once and for all in terms of $\phi$ so that $\pi := (\pi^1,\ldots,\pi^k)$ for
\begin{equation}
\pi^j(x,y) := -y^{\de +j} + \phi^j (x,(y^1,\ldots,y^{\de })), \qquad j \in \{1,\ldots,k\}, \label{definingfn}
\end{equation}
and any pair $(x,y) \in \R^n \times \R^{\en}$ such that $(x,(y^1,\ldots,y^{\de})) \in U$. The second is that the coarea measure is, for this definition of $\pi$, simply equal to Lebesgue measure in $y^1,\ldots,y^{\de}$:
\begin{proposition}
Let $\pi$ be the defining function \eqref{definingfn}. \label{coareaflat}
The coarea measure $d\sigma$ on the submanifolds $\li{x}$ is exactly Lebesgue measure $dt$ when $\li{x}$ is parametrized by $\gamma_t(x)$ (as defined before \eqref{theop})
for $t \in \R^{\de }$, i.e.,
\begin{equation} \int_{\li{x}} f d \sigma = \int_{\lx{x}U} f(\gamma_t(x)) \, dt  = \label{intdef} \int_{\lx{x}U} f(t,\phi(x,t)) \, dt\end{equation}
for all nonnegative Borel-measurable functions $f$ on $\R^{\en }$,
where $\lx{x}{U}$ is the open subset of those $t \in \R^{\de }$ on which $D_x \phi(x,t)$ is full rank.
\end{proposition}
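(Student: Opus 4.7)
The plan is to work directly from the definitions. The coarea measure on $\li{x}$ is $d\sigma = \dH^{\de}/||d_y \pi(x,y)||$, and the slice $\li{x}$ is precisely the graph of $t \mapsto \phi(x,t)$ over the set $\lx{x}U$ on which $D_x \phi$ has rank $k$. This identification of the parameter set is immediate, since for the specific $\pi$ in \eqref{definingfn} one has $D_x \pi = D_x \phi$, so $||d_x \pi|| > 0$ is equivalent to $D_x \phi$ being full rank. Consequently the proposition reduces to the claim that the Jacobian factor arising from the area formula for this graph parametrization exactly cancels $||d_y \pi||$.

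To see the cancellation, I would first read off $D_y \pi$ from \eqref{definingfn}. Writing $y = (y',y'')$ with $y' = (y^1,\ldots,y^{\de})$ and $y'' = (y^{\de+1},\ldots,y^{\de+k})$, one has $\partial \pi^j/\partial y^i = \partial \phi^j/\partial t^i$ for $i \leq \de$ and $\partial \pi^j/\partial y^{\de+i} = -\delta_{ji}$, so $D_y \pi$ is the $k \times \en$ block matrix $[\,D_t\phi \mid -I_k\,]$. Because $d_y\pi$ is alternating, the sum in the definition of $||d_y\pi||^2$ collapses (after absorbing the $1/k!$) to the sum of squared $k \times k$ minors of $D_y \pi$, which by the Cauchy--Binet formula equals $\det(D_y\pi (D_y\pi)^T)$. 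Thus
\[ ||d_y\pi(x,\gamma_t(x))||^2 = \det\bigl(I_k + D_t\phi(x,t)(D_t\phi(x,t))^T\bigr). \]

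Second, the parametrization $t \mapsto \gamma_t(x) = (t,\phi(x,t))$ has derivative the $\en \times \de$ matrix with top block $I_\de$ and bottom block $D_t\phi$, so the standard area formula gives $\dH^{\de}(\gamma_t(x)) = \sqrt{\det(I_\de + (D_t\phi)^T D_t\phi)}\,dt$. Applying Sylvester's determinant identity $\det(I + AB) = \det(I + BA)$ with $A = D_t\phi$ (which is $k \times \de$) and $B = (D_t\phi)^T$ shows that this factor coincides with $||d_y\pi(x,\gamma_t(x))||$, so the two Jacobian factors cancel in the formula for $d\sigma$ and yield \eqref{intdef}. There is no serious obstacle here: the content of the proposition is just the Cauchy--Binet/Sylvester identity matching the two Jacobians, and the only point requiring attention is the bookkeeping of indices between the $y$-coordinates of $\R^{\en}$ and the graph parametrization.
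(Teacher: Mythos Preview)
Your proof is correct and follows essentially the same route as the paper: both reduce the claim to the identity $\det(I_k + P P^T) = \det(I_{\de} + P^T P)$ with $P = D_t\phi$, after identifying $||d_y\pi||^2$ with $\det(D_y\pi(D_y\pi)^T)$ and the graph Jacobian with $\det(I_{\de}+P^TP)$. The only cosmetic difference is that the paper proves this determinant identity by hand via the Singular Value Decomposition (writing $P = O_1 D O_2$ and computing both sides as $\prod_j(1+\sigma_j^2)$), whereas you invoke Sylvester's identity directly; your version is slightly cleaner but the argument is the same.
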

\begin{proof}
This is a minor variation of Proposition 5 from \cite{gressman2021}. 
By definition of the coarea measure and the incidence relation $\Sigma$, it suffices to show that $dt = \dH^{\de }(y) / ||d_y \pi(x,y)||$ at every point $y \in \li{x}$ for every $x$. This is because the map $t \mapsto \gamma_t(x)$ clearly parametrizes the level set $\pi(x,\cdot) = 0$ for each $x$. Hausdorff measure $\dH^{\de }$ on the graph of $\gamma_t$ is always equal to
\[ \sqrt{  \det \left( D_t \gamma_t(x) \right)^T \left(D_t \gamma_t(x) \right) } dt \]
when $D_t \gamma_t(x)$ is regarded as a $(\de  + k) \times \de $ matrix. Since $||d_y \pi(x,y)|| =  \sqrt{\det (D_y \pi(x,y)) (D_y \pi(x,y))^T}$ (by virtue of Proposition 1 of \cite{testingcond}), one only needs to show that
\begin{equation}  \det \left( D_t \gamma_t(x) \right)^T  \left(D_t \gamma_t(x) \right)  = \det (D_y \pi(x,y)) (D_y \pi(x,y))^T \label{matcheddet} \end{equation}
when $(y^1,\ldots,y^{\de}) = (t^1,\ldots,t^{\de})$.
Let $P$ be the $k \times \de $ matrix given by 
\[ P := \begin{bmatrix} \frac{\partial \phi^1}{\partial t^1}(x,t) & \cdots & \frac{\partial \phi^1}{\partial t^{\de }}(x,t) \\
\vdots & \ddots & \vdots \\
\frac{\partial \phi^k}{\partial t^1}(x,t) & \cdots & \frac{\partial \phi^k}{\partial t^{\de }}(x,t)
\end{bmatrix}.
\]
This matrix $P$ appears as a submatrix (on the left) of $D_y \pi(x,y)$ and a submatrix (on the bottom) of $D_t \gamma_t(x)$. The remaining entries are the negative of a $k \times k$ identity block (for $D_y \pi(x,y)$) and a $\de  \times \de $ identity block (for $D_t \gamma_t(x)$). Consequently
\[ \det (D_y \pi) (D_y \pi)^T = \det ( P P^T + I_{k \times k}) \]
and
\[\det \left( D_t \gamma_t(x) \right)^T \left(D_t \gamma_t(x) \right) = \det (I_{\de  \times \de } + P^T P). \]
By the Singular Value Decomposition, there exists a $k \times \de $ diagonal matrix $D$ and orthogonal matrices $O_1, O_2$ of size $k \times k$ and $\de  \times \de $, respectively, such that $P = O_1 D O_2$. Then
\[ P P^T + I_{k \times k} = O_1 \left( D D^T + I_{k \times k} \right) O_1 \]
and
\[ I_{\de  \times \de } + P^T P = O_2 \left( D^T D + I_{\de  \times \de } \right) O_2, \]
so
\[ \det (D_y \pi ) (D_y \pi)^T = \det \left( D D^T + I_{k \times k} \right) \]
and
\[ \det \left( D_t \gamma_t(x) \right)^T  \left(D_t \gamma_t(x) \right)  = \det (I_{\de  \times \de } + D^T D). \]
Now the nonzero diagonal entries of $D D^T$ and $D^T D$ must simply equal the square of the corresponding nonzero diagonal entries of $D$, so both $\det (I_{\de  \times \de } + D^T D)$ and $\det (D D^T + I_{k \times k})$ equal the product $\prod_j (1 + \sigma_j^2)$, where $\sigma_j$ is the $j$-th diagonal entry of $D$. Thus \eqref{matcheddet} holds.

To conclude, a remark on the set $\lx{x}{U}$ is needed. Observe that the definition of $\pi$ guarantees that
\[ D_x \pi(x,y) = D_x \phi(x,t) \]
when  $(y^1,\ldots,y^{\de}) = (t^1,\ldots,t^{\de})$. The definition of $\pi$ also trivially guarantees that $D_y \pi(x,y)$ is always full rank, so the point $(x,y)$ will belong to $\li{x}$ when $D_x \phi(x,t)$ is full rank. Hence the set of $y \in \li{x}$ corresponds exactly to those $t \in \lx{x}{U}$.
\end{proof}
As it will assumed that $D_x \phi(x,t)$ is full rank at all points of interest, Proposition \ref{coareaflat} guarantees that the definition \eqref{theop} agrees with the definition of Radon-like transforms as it appeared in Theorem \ref{fromtest}.

\section{Linear and multilinear algebra}
\label{linalgsec}
\subsection{Basis replacement results}
\label{basissec}

A fundamental requirement of the condition \eqref{boundthisnotweak} from Theorem \ref{fromtest} is that one must prove a uniform bound over all bases $\{\omega_i\}_{i=1}^n$ of $\R^n$ with volume $1$. It is perhaps not too difficult to imagine that proving a uniform bound over such a large class of parameters is tedious and can easily obscure the important features of the problem. To that end, it is useful to observe that the expression \eqref{typicalsum}, which is a sum over basis elements of some squared quantity, exhibits a large symmetry group which allows one to change the basis $\{\omega_i\}_{i=1}^n$ without changing the value of $||d_x \pi(x,y)||_\omega$ itself. This section deals with a number of what can ultimately be understood as ``basis replacement results'' once the proof of Theorem \ref{characterthm} begins in full force. These basis replacement propositions all deal in one form or another with the question of writing a sum of squares of linear or multilinear functionals (like $||d_x \pi(x,y)||_\omega^2$) in terms of some more desirable collection of functionals or over a more desirable basis. The first such result is as follows; it allows one to realign basis vectors within some scale of subspaces and is effectively a version of the classical Cholesky decomposition.
\begin{proposition}
Let $\{V_j\}_{j=1}^\ell$ be a decreasing sequence of nontrivial subspaces of a real Hilbert space $H$. Suppose $\{v_i\}_{i=1}^m$ is a basis of $V_1$. Then there exists a basis $\{u_i\}_{i=1}^{m}$ of $V_1$ such that the span of $u_{m-\dim V_j + 1},\ldots,u_m$ equals $V_j$ for each $j \in \{1,\ldots,\ell\}$ and
\label{sbfprop}
\begin{equation}
\sum_{i=1}^m \ang{x,v_i} \ang{v_i,y}= \sum_{i=1}^m \ang{x,u_i} \ang{u_i,y} \qquad \forall x,y \in H. \label{sbf}
\end{equation}
In other words, the vectors $u_1,\ldots,u_m$ can be chosen so that $u_{m-k} \in V_j$ for each $k \in \{0,\ldots,\dim V_j -1\}$ and $j \in \{1,\ldots,\ell\}$.
\end{proposition}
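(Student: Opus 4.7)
The identity \eqref{sbf} is symmetric in $v_i$ and $u_i$ and, because every $v_i$ lies in $V_1$, depends only on the orthogonal projections of $x$ and $y$ onto $V_1$. Fixing an orthonormal basis $e_1,\ldots,e_m$ of $V_1$ and letting $V$, $U$ denote the $m \times m$ matrices whose $i$-th columns are the coordinates of $v_i$, $u_i$ in this basis, the required identity \eqref{sbf} is equivalent to the single matrix equation $UU^T = VV^T$.

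The plan is to choose the basis $\{e_k\}$ adapted to the flag, meaning that $e_{m-d_j+1},\ldots,e_m$ is an orthonormal basis of $V_j$ for every $j$, where $d_j := \dim V_j$; such a basis is constructed by picking an orthonormal basis of $V_\ell$, extending to an orthonormal basis of $V_{\ell-1}$, and so on out to $V_1$. In this basis the requirement that $u_{m-d_j+1},\ldots,u_m$ span $V_j$ becomes the requirement that the $i$-th column of $U$ be supported in rows $m-d_{j(i)}+1,\ldots,m$, where $j(i)$ denotes the largest $j$ with $d_j \geq m-i+1$. Since $j(i)$ is chosen exactly so that $m-d_{j(i)} \leq i-1$, this staircase zero pattern is automatic whenever $U$ is lower triangular.

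It therefore suffices to produce a lower triangular $U$ with $UU^T = VV^T$. Because $\{v_i\}_{i=1}^m$ is a basis of $V_1$, the matrix $V$ is invertible and $VV^T$ is positive definite; such a $U$ is then delivered by the Cholesky factorization of $VV^T$, or, equivalently, by taking a QR decomposition $V^T = QR$ with $Q$ orthogonal and $R$ upper triangular and setting $U := VQ = R^T$. The invertibility of $R$ ensures that the columns $u_1,\ldots,u_m$ of $U$ form a basis of $V_1$, and $UU^T = VQQ^TV^T = VV^T$ by construction.

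No serious obstacle is anticipated; the content of the proposition is essentially Cholesky factorization adapted to a flag, and the only real care needed is in aligning the row/column indices so that ``lower triangular'' coincides with the staircase dictated by the flag. If convenient, one may first collapse repeated subspaces, arranging strict inclusions $V_1 \supsetneq \cdots \supsetneq V_\ell$, since a repeated $V_j$ imposes no additional constraint on $U$.
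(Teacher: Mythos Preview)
Your proof is correct and is essentially the same argument as the paper's, only phrased in matrix language. The paper constructs the same flag-adapted orthonormal basis $\{e_i\}$, then applies Gram--Schmidt to $e_1,\ldots,e_m$ in the inner product $B(x,y):=\sum_i\langle x,v_i\rangle\langle v_i,y\rangle$ and takes $u_i$ to be the Riesz representative of $B(\tilde e_i,\cdot)$; unwinding this in coordinates gives exactly the lower-triangular Cholesky factor of $VV^T$, which is your $U=R^T$ from the QR decomposition $V^T=QR$.
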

\begin{proof}
Note that the diagonal case $y = x$ of \eqref{sbf} is the case of real interest, but polarization identities show that the bilinear case is entirely equivalent (see Proposition \ref{linprop} below). 

Let $\{e_i\}_{i=1}^m$ be an orthonormal basis of $V_1$ such that for each $j \in \{1,\ldots,\ell\}$, the span of $\{e_{m-i+1}\}_{i=1}^{\dim V_j}$ is $V_j$ (i.e., the final $\dim V_j$ vectors in the basis belong to $V_j$). Such a basis can always be constructed in reverse by finding an orthonormal basis of $V_\ell$ and augmenting it with a maximal collection of linearly independent vectors in $V_{j-1} \setminus V_{j}$ for each $j$ from $\ell$ down to $2$, and then applying the Gram-Schmidt process in the order of construction (which is the opposite of the ordering by index). Now consider the bilinear form
\[ B(x,y) := \sum_{i=1}^m \ang{x,v_i} \ang{v_i,y} \qquad \forall x,y \in H. \]
This $B$ is symmetric and and positive definite on $V_1$ (because no nonzero vector $x$ can belong to the span of the $v_i$ while being orthogonal to each $v_i$), so $(x,y) \mapsto B(x,y)$ is an inner product on $V_1$. Applying the Gram-Schmidt process in the new inner product to the sequence of vectors $e_1,\ldots,e_n$, it follows that there exist $\tilde e_1,\ldots,\tilde e_n$ which are orthonormal with respect to $B(\cdot,\cdot)$ and have the property that $\tilde e_j$ belongs to the span of $e_1,\ldots,e_j$ for each $j$. Orthonormality in $B( \cdot , \cdot)$ implies that
\begin{equation} B(x,y) = \sum_{i=1}^n B(x,\tilde e_i) B(\tilde e_i, y) \qquad \forall x,y \in V_1. \label{sbf0} \end{equation}
By the Riesz Representation Theorem in the original inner product $\ang{\cdot,\cdot}$, there exist $u_1,\ldots,u_n \in V_1$ such that $\ang{u_i,x} = B(\tilde e_i,x)$ for all $x \in V_1$. By \eqref{sbf0} and the definition of the $u_i$'s, the identity \eqref{sbf} must hold for all $x,y \in V_1$. Additionally, if either $x$ or $y$ belongs to $(V_1)^{\perp}$, both sides of \eqref{sbf} are identically zero. Thus \eqref{sbf} holds for all $x,y \in H$ by simply writing $x=x_0+x_1$ and $y = y_0 + y_1$ for $x_0,y_0 \in V_1$ and $x_1,y_1 \in (V_1)^\perp$ and using bilinearity. Orthonormality of the $\tilde e_i$ in the $B$ inner product means that $\delta_{ii'} = B(\tilde e_i, \tilde e_{i'}) = \ang{u_i,\tilde e_{i'}}$ (where $\delta_{ii'}$ is the Kronecker delta), implying that $u_i$ is orthogonal (in the usual sense) to the span of $\tilde e_1,\ldots,\tilde e_{i-1}$, which is also equal to the span of $e_{1},\ldots,e_{i-1}$. Therefore $u_i$ must itself belong to the span of $e_i,\ldots,e_n$ for each $i \in \{1,\ldots,n\}$. This means that the final $\dim V_j$ vectors in $\{u_i\}_{i=1}^m$ belong to $V_j$ for each $j \in \{1,\ldots,\ell\}$. But $u_1,\ldots,u_m$ must be linearly independent by virtue of \eqref{sbf} (as otherwise there would be a nonzero $x \in V_1$ orthogonal to the span of $u_1,\ldots,u_m$ for which the right-hand side must vanish on the diagonal even though the left-hand side is known not to vanish on the diagonal). Thus $u_{m-\dim V_j+1},\ldots,u_m$ must span $V_j$ for each $j \in \{1,\ldots,\ell\}$.
\end{proof}
The next proposition demonstrates that it is also always possible to replace a given basis with one whose vectors are mutually orthogonal (in analogy with the Singular Value Decomposition).
\begin{proposition}
Suppose $\{v_i\}_{i=1}^m$ are linearly independent vectors in a real Hilbert space $H$. There exist $\{v'_i\}_{i=1}^m$ which are mutually orthogonal, have the same span as $\{v_i\}_{i=1}^m$, and satisfy \label{sumsqprop}
\begin{equation} \sum_{i=1}^m  \ang{x,v_i} \ang{v_i,y} = \sum_{i=1}^m \ang{x,v'_i} \ang{v'_i,y} \ \forall x,y \in H. \label{sumsq} \end{equation}
\end{proposition}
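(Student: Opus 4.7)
The plan is to reduce the identity to a finite-dimensional spectral decomposition of a positive self-adjoint operator. First, as in the proof of Proposition \ref{sbfprop}, a polarization argument shows that it is enough to establish the diagonal case $x = y$, namely that $\sum_i \ang{x,v_i}^2 = \sum_i \ang{x,v'_i}^2$ for every $x \in H$. Second, since both sides of \eqref{sumsq} vanish whenever either argument lies in $V^\perp$ with $V := \operatorname{span}\{v_1,\ldots,v_m\}$, the entire problem can be restricted to the finite-dimensional subspace $V$.

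On $V$, define the operator $A : V \to V$ by
\[ A x := \sum_{i=1}^m \ang{x,v_i} v_i. \]
This $A$ is self-adjoint and satisfies $\ang{x,Ay} = \sum_i \ang{x,v_i}\ang{v_i,y}$ for all $x,y \in V$. It is also positive definite on $V$: if $\ang{x,Ax} = \sum_i \ang{x,v_i}^2 = 0$, then $x$ is orthogonal to each $v_i$ and hence to all of $V$, forcing $x=0$.

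Apply the finite-dimensional spectral theorem to $A$ to obtain an orthonormal basis $e_1,\ldots,e_m$ of $V$ with $A e_j = \lambda_j e_j$ for positive eigenvalues $\lambda_j$. Set $v'_j := \sqrt{\lambda_j}\, e_j$. These vectors are mutually orthogonal and span $V$, and
\[ \sum_{j=1}^m \ang{x,v'_j}\ang{v'_j,y} = \sum_{j=1}^m \lambda_j \ang{x,e_j}\ang{e_j,y} = \ang{x,Ay} = \sum_{i=1}^m \ang{x,v_i}\ang{v_i,y} \]
for all $x,y \in V$, and therefore also for all $x,y \in H$ since both sides are unchanged upon replacing $x,y$ by their orthogonal projections onto $V$.

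There is no real obstacle here; the proposition is essentially the spectral theorem (or equivalently the Singular Value Decomposition) packaged in a form suitable for the later basis-replacement arguments. An alternative route that avoids introducing the operator $A$ is to diagonalize the positive-definite Gram matrix $G_{ij} := \ang{v_i,v_j}$ as $G = O^T D O$ for an orthogonal $O$ and diagonal $D$, then define $v'_j := \sum_i O_{ji} v_i$; orthogonality of $O$ makes the identity \eqref{sumsq} immediate from a direct expansion, while $\ang{v'_j,v'_k} = (O G O^T)_{jk} = D_{jk}$ yields mutual orthogonality.
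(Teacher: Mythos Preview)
Your proof is correct. The alternative route you sketch at the end---diagonalizing the Gram matrix $G_{ij}=\ang{v_i,v_j}$ by an orthogonal $O$ and setting $v'_j=\sum_i O_{ji}v_i$---is exactly the paper's proof; your primary argument via the spectral decomposition of the operator $Ax=\sum_i\ang{x,v_i}v_i$ is the dual viewpoint of the same construction (the Gram matrix and $A$ share eigenvalues, and the eigenvectors correspond under the map $e_j\mapsto$ coordinate vector of $e_j$ in the $v_i$-basis), so the two approaches are essentially identical.
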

\begin{proof}
Let $A$ be the $m \times m$ matrix such that $A_{ii'} := \ang{v_i,v_{i'}}$ for each $i,i' \in \{1,\ldots,m\}$. There exists an orthogonal matrix $O$ of eigenvectors of $A$ such that $\sum_{j,j'} O_{ij} A_{jj'} O_{i'j'} = D_{ii'}$ for some diagonal matrix $D$. Let $v'_i := \sum_{j} O_{ij} v_{j}$ for this matrix $O$. The vectors $v'_i$ are mutually orthogonal because $\ang{v'_i,v'_{i'}} = \sum_{jj'} O_{ij} \ang{v_j,v_{j'}} O_{i'j'} = 0$ when $i \neq i'$. Now
\[ \sum_{i=1}^m \ang{x,v'_i} \ang{v'_i,y} = \sum_{i,j,j'=1}^m O_{ij} \ang{x,v_{j}} O_{ij'} \ang{v_{j'},y} = \sum_{j=1}^m \ang{x,v_j} \ang{v_j,y} \]
for any $x,y \in H$, which one sees from the middle expression by first carrying out the sum over $i$ and using that $O$ is orthogonal.
\end{proof}

The usefulness of Propositions \ref{sbfprop} and \ref{sumsqprop} as basis change results in later sections is a consequence of the identity \eqref{normsqsum} of the following proposition.
\begin{proposition}
Let $\{v_i\}_{i=1}^m$ and $\{w_i\}_{i=1}^m$ be sets of linearly-independent vectors in some finite-dimensional real Hilbert space $H$. The following are equivalent: \label{linprop}
\begin{enumerate}
\item The identity 
\begin{equation} \sum_{i=1}^m \ang{x,v_i} \ang{v_i,y} = \sum_{i=1}^m \ang{x,w_i} \ang{w_i,x} \label{bilin} \end{equation}
holds for all $x, y \in H$.
\item For every integer $k \geq 1$, every $k$-linear functional $L$ satisfies
\begin{equation} \sum_{i=1}^m |L(\tup{v_i}{k})|^2 = \sum_{i=1}^m |L(\tup{w_i}{k})|^2. \label{normsqsum} \end{equation}
\item There exists an $m \times m$ orthogonal matrix $O$ such that 
\begin{equation} v_{i} = \sum_{j} O_{ij} w_j  \text{ for all } i \in \{1,\ldots,m\}. \label{bssize} \end{equation} 
\end{enumerate}
\end{proposition}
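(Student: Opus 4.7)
The plan is to prove the three conditions equivalent by the cycle $(3) \Rightarrow (2) \Rightarrow (1) \Rightarrow (3)$, with most of the real content appearing in the last implication.

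For $(3) \Rightarrow (2)$, I would simply expand $L(v_{i_1},\ldots,v_{i_k}) = \sum_{j_1,\ldots,j_k} O_{i_1 j_1} \cdots O_{i_k j_k} L(w_{j_1},\ldots,w_{j_k})$ by multilinearity; after squaring and summing over $i_1,\ldots,i_k$, the orthogonality relations $\sum_i O_{ij} O_{ij'} = \delta_{jj'}$ applied once per tuple index collapse everything to $\sum_{j_1,\ldots,j_k} |L(w_{j_1},\ldots,w_{j_k})|^2$, which is \eqref{normsqsum}. For $(2) \Rightarrow (1)$, specialize to $k = 1$ with $L(v) := \ang{x,v}$ for arbitrary $x \in H$; this yields $\sum_i \ang{x,v_i}^2 = \sum_i \ang{x,w_i}^2$, i.e., the diagonal case $y = x$ of \eqref{bilin}, after which a standard polarization identity applied to the two symmetric bilinear forms in \eqref{bilin} promotes the diagonal agreement to the full identity.

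The heart of the matter is $(1) \Rightarrow (3)$, which I would approach by repackaging (1) as an operator identity. Define $V, W : \R^m \rightarrow H$ by $V e_i := v_i$ and $W e_i := w_i$; a direct calculation shows $\ang{V V^* x, y} = \sum_i \ang{x, v_i} \ang{v_i, y}$ and similarly for $W$, so \eqref{bilin} is exactly the statement $V V^* = W W^*$ as self-adjoint operators on $H$. The common image of these operators must coincide with both $\mathrm{span}\{v_i\}$ and $\mathrm{span}\{w_i\}$, making the two spans equal to a single $m$-dimensional subspace $V_1$ on which both $V$ and $W$ act as isomorphisms from $\R^m$. Setting $S := V^{-1} W$ so that $W = VS$, the identity $VV^* = V S S^* V^*$ can be reduced by cancelling $V^*$ on the right (using surjectivity of $V^* : H \rightarrow \R^m$) and then $V$ on the left (using injectivity) to obtain $S S^* = I$, i.e., $S$ is orthogonal. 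Componentwise, $W = VS$ reads $w_i = \sum_j S_{ji} v_j$, which, since $S^{-1} = S^T$, inverts to $v_i = \sum_j S_{ij} w_j$, giving \eqref{bssize} with $O := S$. The one point that requires care is the cancellation step: because $H$ may have dimension strictly larger than $m$, injectivity of $V$ and surjectivity of $V^*$ (both of which follow from linear independence of $\{v_i\}$) are precisely what makes the cancellation legitimate.
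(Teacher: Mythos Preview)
Your proof is correct, but you run the cycle in the opposite direction from the paper and the substantive implication is handled quite differently. The paper proves $(1)\Rightarrow(2)\Rightarrow(3)\Rightarrow(1)$: it obtains \eqref{normsqsum} from \eqref{bilin} by induction on $k$ (freezing all but one argument and using the $k=1$ case on that slot), and then deduces \eqref{bssize} from \eqref{normsqsum} by testing with the linear functionals $L(u)=\sum_i t^i\ang{u,u_i}$ built from a dual basis $\{u_i\}$ to $\{w_i\}$, which directly exhibits $O_{ij}=\ang{v_i,u_j}$ as orthogonal. Your route instead handles $(1)\Rightarrow(3)$ in one shot via the operator identity $VV^*=WW^*$ and the cancellation argument giving $SS^*=I$; this is cleaner and more conceptual, and it yields equality of the spans essentially for free. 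The paper's argument, by contrast, is more hands-on and keeps everything at the level of explicit test functionals, which fits the later computations in Section~\ref{nondegensec} where \eqref{normsqsum} is applied directly to multilinear determinantal expressions. Both the inductive proof of $(1)\Rightarrow(2)$ and your expansion proof of $(3)\Rightarrow(2)$ are standard; the real difference is in how the orthogonal matrix is extracted.
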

\begin{proof}
The proof of \eqref{normsqsum} from \eqref{bilin} follows by induction on $k$ with $k=1$ being the restriction of \eqref{bilin} to the diagonal.  Then for each fixed $i_1,\ldots,i_{k-1}$, one has
\[ \sum_{i_k=1}^m |L(v_{i_1},\ldots,v_{i_k})|^2 = \sum_{i_k=1}^m |L(v_{i_1},\ldots,v_{i_{k-1}},w_{i_k})|^2 \]
simply because each map $u \mapsto L(v_{i_1},\ldots,v_{i_{k-1}},u)$ is a linear functional of $u$, so there is some $x \in H$ such that $L(v_{i_1},\ldots,v_{i_{k-1}},z) = \ang{x,z}$ for all $z \in H$. But also by induction
\[ \sum_{i = 1}^m |L(\tup{v_i}{k-1},w_{i_k})|^2 = \sum_{i=1}^m |L(\tup{w_i}{k-1} , w_{i_k})|^2 \]
for each $i_k$ because $(u_1,\ldots,u_{k-1}) \mapsto L(u_1,\ldots,u_{k-1},w_{i_k})$ is a $(k-1)$-linear functional. Summing over $i_k \in \{1,\ldots,m\}$ completes the proof.

To see why \eqref{bssize} follows from \eqref{normsqsum}, let $\{u_i\}_{i=1}^m$ be vectors in the span of the $w_i$'s such that $\ang{u_i,w_{i'}} = \delta_{ii'}$ for each $i,i' \in \{1,\ldots,m\}$ (which are possible to construct because the $w_i$ are linearly independent). Now let $L(u) := \sum_{i=1}^m t^i \ang{u,u_i}$ for arbitrary $t^1,\ldots,t^m \in \R$. By \eqref{normsqsum},
\[ |t|^2 = \sum_{i=1}^m |L(w_i)|^2 = \sum_{i''=1}^m |L(v_{i''})|^2 = \sum_{i,i',i''=1}^m t^i t^{i'} \ang{v_{i''},u_{i'}} \ang{v_{i''},  u_{i}} \]
for all $t = (t^1,\ldots,t^m) \in \R^m$. Taking partial derivatives of both sides implies that the matrix $O$ such that $O_{ij} := \ang{v_i,u_j}$ satisfies $O^T O = I$, which means that $O$ must be orthogonal.  Similar to earlier observations, \eqref{normsqsum} forces the span of the $v_i$ to equal the span of the $w_i$, since otherwise there would be a vector $u$ orthogonal to all $v_i$ but not all $w_i$ or vice-versa, which violates \eqref{normsqsum} by taking $L$ to be the inner product with this $u$. Therefore, one may always write $v_i = \sum_{j} c_{ij} w_j$ for constants $c_{ij}$; taking inner products of both sides with $u_{j'}$ gives that $O_{ij'} = \ang{v_i,u_{j'}} = c_{ij'}$ for each $i,j'$.

Finally, \eqref{bssize} implies \eqref{bilin} by the same argument that was used to establish \eqref{sumsq} in Proposition \ref{sumsqprop}.
\end{proof}

\subsection{Nondegeneracy examined}
\label{nondegensec}

This section contains proofs of a number of quantitative consequences of the nondegeneracy condition for the curvature functional $Q$ as defined by \eqref{qdef}. The first result, however, is of a qualitative nature and demonstrates the relationship between nondegeneracy and the H\"{o}rmander condition, which will be a small but important ingredient of the passage from polynomial to $C^\infty$ mappings in Section \ref{npasec}.
It should not be a surprise that nondegeneracy of $Q$ corresponds to a strong local curvature criterion.
\begin{proposition}
Let $U \subset \R^n \times \R^{\de}$ be an open set and suppose that $\pi_1 : U \rightarrow \R^{\en}$ and $\pi_2 : U \rightarrow \R^n$ are given by $\pi_1(x,t) := (t, \phi(x,t))$ and $\pi_2(x,t) := x$, where $\phi : U \rightarrow \R^k$ is some smooth function on $U$ such that the Jacobian matrix $D_x \phi$ is everywhere full rank (and here, as in the introduction, let $\de := \en - k$ and $d := n - k$). Suppose that $\{X_i\}_{i=1}^d$ are smooth vector fields on $U$ which span the kernel of $D_{x,t} \pi_1$ (i.e., the Jacobian of $\pi_1$ with respect to both sets of variables $x$ and $t$) at every point and that $\{T_j\}_{j=1}^{\de}$ are smooth vector fields that span the kernel of $D_{x,t} \pi_2$ at every point. If $Q$ as defined by \eqref{qdef} is nondegenerate, then the vector fields \label{hormander}
\begin{equation} X_1,\ldots,X_d,T_1,\ldots,T_{\de},[X_1,T_1],\ldots,[X_1,T_{\de}],\ldots,[X_d,T_1],\ldots,[X_d,T_{\de}] \label{vflist} \end{equation}
span the tangent space of $U$ at $(x,t)$.  Conversely, if the vector fields \eqref{vflist} span the tangent space of $U$ at $(x,t)$, this does not imply nondegeneracy of $Q$.
\end{proposition}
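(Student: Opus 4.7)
The plan is to work entirely in coordinates: identify the vector fields, compute the bracket $[X_i,T_j]$, and then show that the projection of that bracket via $D_x\phi$ to $\R^k$ is exactly read off of the coefficients $Q_{ji'i}$. Since $\pi_1(x,t)=(t,\phi(x,t))$ has Jacobian whose kernel is $\{(z,0):z\in\ker D_x\phi\}$, I take $X_i=\sum_\ell z_i^\ell\,\partial_{x^\ell}$ for a smoothly-varying orthonormal basis $\{z_i\}_{i=1}^d$ of $\ker D_x\phi$; similarly $T_j=\partial_{t^j}$. A direct computation gives
\[
 [X_i,T_j] = -\sum_{\ell=1}^n \bigl(\partial_{t^j} z_i^{\ell}\bigr)\,\partial_{x^\ell}.
\]
Because $X_1,\dots,X_d,T_1,\dots,T_\de$ already span $\ker D_x\phi\times \R^\de$ at $(x,t)$, the list \eqref{vflist} spans the full tangent space precisely when the vectors $D_x\phi\,[X_i,T_j]\in\R^k$ span $\R^k$ as $(i,j)$ varies.

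To translate this into a statement about $Q$, I would differentiate the defining identity $\sum_\ell(\partial\phi^{i'}/\partial x^\ell)z_i^\ell=0$ in $t^j$; using \eqref{qdef} the $i'$-th component of $D_x\phi\,[X_i,T_j]$ turns out to be exactly $Q_{j i' i}$. Thus the bracket-spanning condition is equivalent to the algebraic requirement that for every nonzero $v\in\R^k$ there exist $u\in\R^\de$ and $w\in\R^d$ with $Q(u,v,w)\neq 0$, i.e.\ the middle slot of $Q$ has no left null vector.

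For the forward implication I would argue by contrapositive. Suppose some unit $v_0\in\R^k$ satisfies $Q(\,\cdot\,,v_0,\,\cdot\,)\equiv 0$. Pick $O_2\in O(k)$ with $O_2 e_1=v_0$, let $O_1,O_3$ be identities, and set $Q'(u,v,w):=Q(u,O_2 v,w)$ as in \eqref{nr0def}. Then $Q'(\,\cdot\,,e_1,\,\cdot\,)\equiv 0$, so in every matrix in \eqref{qdetdef} whose index sequence $\mathcal I$ meets $\{1\}$ some row is identically zero, forcing its determinant to vanish identically in $\tau$. Hence every nonzero element of $N(Q')$ has $\beta_1=0$, and taking convex hulls preserves this. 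But the diagonal \eqref{diagonal} has first $\beta$-coordinate $d/n>0$, so it cannot lie in $N(Q')$, contradicting the inclusion $\nr(Q)\subset N(Q')$.

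For the converse, take any situation with $k=d=1$ and $\de\geq 2$. Then $Q$ is a vector in $\R^\de$, $\min\{d,k\}=1$, and the only determinants appearing in \eqref{qdetdef} are the scalars $Q_{i11}$. For any nonzero $Q$ an appropriate $O_1\in O(\de)$ rotates it onto a single coordinate axis, so that $N(Q')$ reduces to the segment from the origin to a single vertex $(e_i,1,1)$; no convex combination of these two points can match the diagonal, whose $\de$ $\alpha$-entries all equal $1/(2\de)$ while its last two entries equal $1/2$. A concrete example is $\phi(x,t):=x^1 t^1+x^2 t^2$ with $n=\de=2$, $k=1$, $d=1$, evaluated at any $(x,t)$ with $t\neq 0$: one verifies directly that $Q\neq 0$ (so the equivalence above gives bracket spanning) while $Q$ is degenerate. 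The main obstacle in the whole argument is the orthogonal invariance that \eqref{nr0def} builds into $\nr(Q)$; it is precisely what makes nondegeneracy strictly stronger than bracket spanning, and keeping careful track of that invariance on both sides of the argument is where the real work lies, while the coordinate identities linking $[X_i,T_j]$ to $Q_{ji'i}$ are routine.
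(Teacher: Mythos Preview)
Your forward implication is essentially the paper's argument, carried out with the convenient specific choices $T_j=\partial_{t^j}$ and $X_i=\sum_\ell z_i^\ell\partial_{x^\ell}$ for a smooth orthonormal frame of $\ker D_x\phi$. The paper instead keeps $X_i,T_j$ arbitrary and reaches the same identity by differentiating $D_x\phi\,X_i\equiv 0$ along $T_j$ and rewriting $T_jX_i$ in terms of $[T_j,X_i]$ modulo the $T$-span; both routes land on ``not spanning $\Rightarrow$ some nonzero $v$ with $Q(\cdot,v,\cdot)\equiv 0$'' and then deduce degeneracy exactly as you do. One small point worth making explicit: since the proposition is stated for \emph{given} $X_i,T_j$, you should note that the span of the list \eqref{vflist} is independent of the choice of spanning families (the brackets transform by the change-of-basis matrices modulo the span of the $X$'s and $T$'s), so your computation with a particular frame suffices.

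Your counterexample is genuinely different from the paper's and equally valid. The paper takes $n=3$, $d=2$, $\de=k=1$ and $\phi(x,t)=x^2+x^1t$, so that $Q$ has a null $w$-slot (the $\partial_{x^3}$ direction), and degeneracy follows by the same ``$\gamma$-coordinate forced to zero'' argument applied in the $w$-variable. You instead take $d=k=1$, $\de=2$ and exploit the $O(\de)$-invariance in \eqref{nr0def}: rotating the vector $Q\in\R^{\de}$ onto a coordinate axis collapses $N(Q')$ to a single segment that cannot contain the diagonal point, even though $Q\neq 0$ (hence bracket spanning holds by your equivalence). Both examples illustrate that nondegeneracy is strictly stronger than the first-bracket H\"ormander condition, but they do so through different slots of $Q$; your version has the mild advantage of showing that degeneracy can arise purely from the orthogonal intersection in \eqref{nr0def} rather than from an outright null vector in the $v$- or $w$-slot.
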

\begin{proof}
By virtue of the definitions of $\pi_1$ and $\pi_2$, it must be the case that the span of $T_1,\ldots,T_{\de}$ is everywhere equal to the span of $\{\partial/\partial t^j\}_{j=1}^{\de}$. Similarly, each $X_i$ must belong to the span of $\{\partial/ \partial x^i\}_{i=1}^n$ and must satisfy
\begin{equation} X_i \phi(x,t) \equiv 0. \label{vanishcond} \end{equation}
Because $X_i$ annihilates $t$, one can regard $X_i$ as a vector in $\R^n$ (i.e., it has no nonzero components in the $t$ directions of $\R^n \times \R^{\de}$) and \eqref{vanishcond} becomes equivalent to the assertion that
\[ D_x \phi|_{(x,t)} X_i|_{(x,t)} \equiv 0, \]
where $D_x \phi$ is the usual Jacobian matrix of $\phi$ with respect to $x$. Differentiating this expression with respect to $T_j$ implies that
\[ T_j D_x \phi|_{(x,t)} X_i|_{(x,t)} \equiv 0 \]
for any $i \in \{1,\ldots,d\}$ and $j \in \{1,\ldots,\de\}$. Writing \textit{both} $X_i$ and $T_j$ in coordinates on $U$ allows one to apply the differential operator $T_j$ directly to the $\R^n$-valued function $X_j$; by definition of the commutator combined with the fact that $T_1,\ldots,T_{\de}$ span all directions in the $t$ variables, $T_j X_i = [T_j, X_i] + \sum_{j'=1}^{\de} c^{j'}_j(x,t) T_{j'}$ for some smooth functions $c^{j'}_j$. Therefore, by the product rule,
\begin{equation} (T_j D_x \phi|_{(x,t)} ) X_i |_{(x,t)} + D_x \phi |_{(x,t)} \left[ [T_j, X_i] + \sum_{j'=1}^{\de} c^{j'}_j(x,t) T_{j'} \right] \equiv 0 \label{twoderiv} \end{equation}
for all $i \in \{1,\ldots,d\}$ and all $j \in \{1,\ldots,\de\}$, where $T_j D_x \phi$ is understood to be the $T_j$ derivative of the matrix $D_x \phi$ expressed in standard coordinates.

Suppose the vector fields \eqref{vflist} fail to span the tangent space of $U$ at $(x,t)$. It still must be the case that the collection  $\{X_i\}_{i=1}^d \cup \{T_j\}_{j=1}^{\de}$ is linearly independent simply because the spaces spanned by $\{X_i\}_{j=1}^d$ and $\{T_j\}_{j=1}^{\de}$ are known to be transverse (and neither collection is itself linearly dependent). Moreover, as the $\{T_j\}_{j=1}^{\de}$ span the same space as $\{\partial/\partial t^j\}_{j=1}^{\de}$, failure of the collection \eqref{vflist} to span means that there must be some direction $X_*$ in the span of $\{\partial/\partial x^i\}_{i=1}^n$ not expressible as a linear combination of $X_1,\ldots,X_d$ and the vector fields $[T_j, X_i] + \sum_{j'=1}^{\de} c^{j'}_j(x,t) T_{j'}$ (which also belong to the span of the $\partial/\partial x^i$ because they equal $T_j X_i$). The missing direction $X_*$ cannot belong to the kernel of $D_x \phi$ because $X_1,\ldots,X_d$ span the kernel. Thus $D_x \phi|_{(x,t)} X_* \neq 0$. If any linear combination $X'$ of the vector fields $X_1,\ldots,X_d$ and the vector fields $[T_j, X_i] + \sum_{j'=1}^{\de} c^{j'}_j(x,t) T_{j'}$ had the property that $D_x \phi|_{(x,t)} X' = D_x \phi|_{(x,t)} X_*$, then $X' - X_*$ would belong to the kernel of $D_x \phi|_{(x,t)}$, which would then imply that $X_*$ itself must belong to the span of the vector fields \eqref{vflist} at the point $(x,t)$. Therefore, dimension of the image via $D_x \phi|_{(x,t)}$ of the span of $X_1,\ldots,X_d$ and the $[T_j, X_i] + \sum_{j'=1}^{\de} c^{j'}_j(x,t) T_{j'}$ cannot be full and therefore the span of just the vectors $D_x \phi|_{(x,t)} \left( [T_j, X_i] + \sum_{j'=1}^{\de} c^{j'}_j(x,t) T_{j'} \right)$ at $(x,t)$ cannot have full dimension. This implies the existence of some nonzero $v \in \R^k$ such that $v \cdot D_x \phi|_{(x,t)} \left( [T_j, X_i] + \sum_{j'=1}^{\de} c^{j'}_j(x,t) T_{j'} \right) = 0$ for all $i$ and $j$. By \eqref{twoderiv}, it must be the case that
\[ v \cdot (T_j D_x \phi|_{(x,t)})  X_i|_{(x,t)} = 0 \]
for all $i$ and $j$. In terms of the definition \eqref{qdef}, this means that $Q$ as defined at the point $(x,t)$ admits a nonzero vector $v \in \R^k$ such that $Q(u,v,w) = 0$ for all $u$ and $w$.
Now choose a coordinate system on $\R^k$ for which the vector $v$ points in the first coordinate direction. In these coordinates, any triple $(\alpha,\beta,\gamma)$ of multiindices satisfying \eqref{qdetdef} would necessarily satisfy $\beta_1 = 0$ (since if $\beta_1 > 0$, at least one of the rows of the matrix in \eqref{qdetdef} would be identically zero as a function of $\tau$). This implies that the convex hull of all such $(\alpha,\beta,\gamma)$ would also belong to the hyperplane $\beta_1 = 0$, and hence that \eqref{diagonal} would not be contained in that convex hull. Thus $Q$ would necessarily be degenerate.

To see that the spanning condition on \eqref{vflist} is insufficient to imply nondegeneracy of $Q$, simply consider the case $\phi((x^1,x^2,x^3),t) := x^2 + x^1 t$ in a neighborhood of the origin. Clearly $D_x \phi$ is never zero and therefore always full rank. In this example, $d = 3-1 = 2$ and $\de = 1$. The vector fields $X_1$ and $X_2$ can be taken to equal $\partial/\partial x^1 - t \partial/\partial x^2$ and $\partial/\partial x^3$, respectively, and $T_1$ can simply equal $\partial/\partial t$.
The vector fields \eqref{vflist} span at the origin because $[X_1,T_1]$ points in the direction $\partial/\partial x^2$ there (which does not lie in the span of $X_1, X_2$, and $T_1$), but at $(x,t) = (0,0)$, there is a nonzero $w$ such that $Q(u,v,w) = 0$ for all $u, v$. This $w$ corresponds to differentiation of $\phi$ in the $x^3$-direction (i.e., $\partial^2 \phi / \partial x^3 \partial t \equiv 0$). As above, this forces $Q$ to be degenerate.
\end{proof}

It is now time to turn to the main goal of this section, which is to examine the property of nondegeneracy in a general way. For convenience,
suppose $Q : \R^{\de } \times \R^k \times \R^d \rightarrow \R$ is any trilinear form (i.e., not necessarily derived from \eqref{qdef} and, at this point, not depending on any smooth parameters like $x$).  A key component of the proof of Theorem \ref{characterthm} is to develop appropriate qualitative and quantitative measures of nondegeneracy of $Q$. As indicated in the introduction, this involves studying determinants like \eqref{qdetdef}.
For any integer $s \geq 1$, let 
\begin{equation} Q_s(t,v_1,\ldots,v_s,w_1,\ldots,w_s) := \det \begin{bmatrix} Q(t , v_1 , w_1) & \cdots & Q(t,v_1, w_{s}) \\  \vdots & \ddots & \vdots \\ Q(t , v_{s} , w_{1}) & \cdots & Q(t,v_{s}, w_{s}) \end{bmatrix} \label{qsdef} \end{equation}
for any vectors $t \in \R^{\de }$, $v_1,\ldots,v_s \in \R^k$ and $w_1,\ldots,w_s \in \R^d$.
Given any bases $\{u_i\}_{i=1}^{\de }$, $\{v_i\}_{i=1}^{k}$ and $\{w_i\}_{i=1}^d$, let 
\begin{equation} \begin{split} \left( \mathcal{Q} [\{u_i\}_{i=1}^{\de }, \{v_i\}_{i=1}^{k},\{w_i\}_{i=1}^d] \right)^2 & := \\  
1 + \sum_{s=1}^{\min\{d,k\}} \sum_{i=1}^{\de }& \sum_{i'=1}^k \sum_{i''=1}^d \left| ( \tup{u_i}{s} \cdot \nabla_t^s) Q_s(t, \tup{v_{i'}}{s}, \tup{w_{i''}}{s}) \right|^2. \end{split} \label{qsumdef}
\end{equation}
Here the notation $\tup{u_i}{s} \cdot \nabla_t^s$ is meant to expand to become the order $s$ differential operator $(u_{i_1} \cdot \nabla_t) \cdots (u_{i_s} \cdot \nabla_t)$, where $\nabla_t$ is the usual gradient in $t$.
Note that there is a simple reason why the the sum over $s$ ends at $\max \{d,k\}$: any larger values of $s$ would necessarily involve determinants of matrices with repeated rows or columns and therefore would be identically zero. Also note that the point $t$ at which the derivative in $t$ is evaluated does not matter because it is an order $s$ derivative of a homogeneous polynomial of degree $s$ in the variable $t$. In fact, this means that for each fixed $s$ in the sum on the right-hand side of \eqref{qsumdef}, the expanded sum over $i_1,\ldots,i_s$ is simply a norm (squared) of the polynomial $Q_s(t,v_{i'_1},\ldots,v_{i'_s},w_{i''_1},\ldots,w_{i''_s})$, which means that it could be replaced by any other comparable norm without fundamentally changing the magnitude of the quantity. For example, if one defines
\begin{equation}
B_u := \set{ t \in \R^{\de }}{ t = \sum_{i=1}^{\de } c_i u_i \text{ for some } \sum_{i=1}^{\de } c_i^2 \leq 1} \label{balldef}
\end{equation}
and 
\begin{equation} \begin{split} \left( \mathcal{Q}_{\sup} [\{u_i\}_{i=1}^{\de }, \{v_i\}_{i=1}^{k},\{w_i\}_{i=1}^d] \right)^2 & := \\  
1 + \sum_{s=1}^{\min\{d,k\}} & \sum_{i'=1}^k \sum_{i''=1}^d \sup_{t \in B_u} \left| Q_s(t, \tup{v_{i'}}{s}, \tup{w_{i''}}{s}) \right|^2, \end{split} \label{qsumaltdef}
\end{equation}
then
\begin{equation}
 \mathcal{Q}_{\sup} [\{u_i\}_{i=1}^{\de }, \{v_i\}_{i=1}^{k},\{w_i\}_{i=1}^d] \approx \mathcal{Q} [\{u_i\}_{i=1}^{\de }, \{v_i\}_{i=1}^{k},\{w_i\}_{i=1}^d] \label{qcomp}
 \end{equation}
 for implicit constants that depend only on $\de ,d$, and $k$. This will become relevant in Section \ref{nesssec} while proving necessity of nondegeneracy.

It is useful to sort terms in the expansion \eqref{qsumdef} in a symmetric way using the notion of multiindices that count $(i_1,\ldots,i_s)$, $(i'_1,\ldots,i'_s)$, and $(i''_1,\ldots,i''_s)$.  The right-hand side of \eqref{qsumdef} is symmetric under permutations of $(i_1,\ldots,i_s)$, $(i'_1,\ldots,i'_s)$, and $(i''_1,\ldots,i''_s)$ because permutations of $u_1,\ldots,u_s$ preserve $\tup{u_i}{s} \cdot \nabla^s_t$ and permutations of $v_1,\ldots,v_s$ or $w_1,\ldots,w_s$ in \eqref{qsdef} change only the sign of $Q_s$. 
Thus for every $\alpha \in \Z_{\geq 0}^{\de}$, $\beta \in \Z_{\geq 0}^k$, and $\gamma \in \Z_{\geq 0}^d$ with $|\alpha| = |\beta| = |\gamma| = s$, there are exactly $(s!)^3 / (\alpha! \beta! \gamma!)$ equal terms on the right-hand side of \eqref{qsumdef} for which $\alpha$ counts $(i_1,\ldots,i_s)$, $\beta$ counts $(i'_1,\ldots,i'_s)$, and $\gamma$ counts $(i''_1,\ldots,i''_s)$.
If one then defines 
$(u \cdot \nabla_t)^{\alpha} Q_s (t, v_{\beta}, w_{\gamma})$ to equal
\[ (u_{i_1} \cdot \nabla_t) \cdots (u_{i_s} \cdot \nabla_t) Q_s(t,v_{i'_1},\ldots,v_{i'_s},w_{i''_1},\ldots,w_{i''_s}) \] for some  $i_1,\ldots,i_s$, $i'_1,\ldots,i'_s$ and $i''_1,\ldots,i''_s$ such that $\alpha$ counts $(i_1,\ldots,i_s)$, $\beta$ counts $(i'_1,\ldots,i'_s)$, and $\gamma$ counts $(i''_1,\ldots,i''_s)$, it follows that
\begin{equation} \begin{split} \left( \mathcal{Q} [\{u_i\}_{i=1}^{\de }, \{v_i\}_{i=1}^{k},\{w_i\}_{i=1}^d] \right)^2 & = \\  
1 + \sum_{s=1}^{\min\{d,k\}} \sum_{|\alpha| = |\beta| = |\gamma| = s} & \frac{(s!)^3}{\alpha! \beta! \gamma!} \left| (u \cdot \nabla_t)^{\alpha} Q_s(t, v_\beta, w_\gamma ) \right|^2. \end{split} \label{qsumdef2}
\end{equation}

Using this formulation, it is possible to give a slightly more intrinsic definition of $\nr(Q)$ that agrees exactly with \eqref{nr0def}. To that end, let $\{\tilde u_i\}_{i=1}^{\de }, \{\tilde v_i\}_{i=1}^{k}$, and $\{\tilde w_i\}_{i=1}^d$ be orthonormal bases of $\R^{\de }, \R^k$, and $\R^d$, respectively. Let
\begin{equation}
\begin{split}
N_0  (Q,\{\tilde u_i\}_{i=1}^{\de }, & \{\tilde v_i\}_{i=1}^{k}, \{\tilde w_i\}_{i=1}^d) \\ := \Big\{ (\alpha,\beta,\gamma) &  \in \Z_{\geq 0}^{\de } \times \Z_{\geq 0}^{k} \times \Z_{\geq 0}^d \ | \   |\alpha| = |\beta| = |\gamma| \leq \min\{d,k\}  \\ & \text{ and either } |\alpha| = 0 \text{ or }  (\tilde u \cdot \nabla_t)^\alpha Q_{|\alpha|}(t,\tilde v_\beta, \tilde w_\gamma) \neq 0  \Big\}
\end{split} \label{N0def}
\end{equation}
and let
\begin{equation} \nr(Q) := \! \! \! \mathop{\bigcap_{ \{\tilde u_i\}_{i=1}^{\de },  \{\tilde v_i\}_{i=1}^{k}, \{\tilde w_i\}_{i=1}^d}}_{\text{orthonormal}} \! \! \! \! \! \! \! \! \! \! \! \! \operatorname{conv. hull} \left( N_0  (Q,\{\tilde u_i\}_{i=1}^{\de }, \{\tilde v_i\}_{i=1}^{k}, \{\tilde w_i\}_{i=1}^d) \right). \label{nrdef}
\end{equation}
Note that the convex hull is taken in $[0,\infty)^{\de  + k + d}$. This is equivalent to \eqref{nr0def} simply because every orthonormal basis can be mapped to the standard basis by an orthogonal matrix, and likewise the action of any orthogonal matrix on the standard basis is to map it to an orthonormal basis. The relevance of $\nr(Q)$ is demonstrated by the following lemma.
\begin{lemma}
Given $Q$, $\mathcal{Q}$ and $\nr(Q)$ as defined above, the following are true: \label{newtonlemma}
\begin{enumerate}
\item For any $s \geq 0$, the point
\begin{equation} \Big(\overbrace{\frac{s}{\de },\ldots,\frac{s}{\de }}^{\de  \text{ copies}},\overbrace{\frac{s}{k},\ldots,\frac{s}{k}}^{k \text{ copies}},\overbrace{\frac{s}{d},\ldots,\frac{s}{d}}^{d \text{ copies}} \Big)\label{spoint} \end{equation} belongs to $\nr(Q)$ if and only if there exists $c > 0$ such that every $Q'$ sufficiently close to $Q$ in the standard topology generates a functional $\mathcal{Q}'$ (replacing $Q_s$ by $Q_s'$ on the right-hand side of \eqref{qsumdef}) satisfying
\begin{equation} \begin{split} \mathcal{Q}'[\{u_i\}_{i=1}^{\de }, & \{v_i\}_{i=1}^k,\{w_i\}_{i=1}^d] \\ & \geq c |\det \{u_i\}_{i=1}^{\de }|^{\frac{s}{\de }} |\det  \{v_i\}_{i=1}^{k}|^{\frac{s}{k}} |\det \{w_i\}_{i=1}^d|^{\frac{s}{d}} \end{split} \label{qnondegen} \end{equation}
for any bases $\{u_i\}_{i=1}^{\de }$, $\{v_i\}_{i=1}^{k}$, and $\{w_i\}_{i=1}^d$.

\item For any $s \geq 0$, the point \eqref{spoint} fails to belong to $\nr(Q)$ if and only if there exist orthonormal bases $\{u_i\}_{i=1}^{\de }$, $\{v_i\}_{i=1}^{k}$, and $\{w_i\}_{i=1}^d$ diagonalizing certain symmetric matrices $D_1, D_2,$ and $D_3$, respectively, such that the trace of $D_1$ is positive, the traces of $D_2$ and $D_3$ are zero, and
\begin{equation} \frac{\mathcal{Q}[\{e^{\tau D_1 } u_i\}_{i=1}^{\de }, \{e^{\tau D_2} v_i\}_{i=1}^k,\{e^{\tau D_3} w_i\}_{i=1}^d]}{|\det \{e^{\tau D_1 } u_i\}_{i=1}^{\de }|^{\frac{s}{\de }} |\det  \{e^{\tau D_2} v_i\}_{i=1}^{k}|^{\frac{s}{k}} |\det \{e^{\tau D_3} w_i\}_{i=1}^d|^{\frac{s}{d}}} \rightarrow 0 \label{hilbertmumford} \end{equation}
 as $\tau \rightarrow \infty$. Note specifically that $|\det \{e^{\tau D_1 } u_i\}_{i=1}^{\de }| \rightarrow \infty$ as $\tau \rightarrow \infty$ and $|\det  \{e^{\tau D_2} v_i\}_{i=1}^{k}| = |\det \{v_i\}_{i=1}^k| = 1 = |\det \{e^{\tau D_3} w_i\}_{i=1}^d| = |\det \{w_i\}_{i=1}^d|$ for all $\tau$.
\end{enumerate}
\end{lemma}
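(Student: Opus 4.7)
My plan is to prove both parts via a common reduction followed by two complementary arguments: weighted AM--GM plus compactness for Part 1's quantitative bound, and a Hilbert--Mumford-style one-parameter degeneration for Part 2. As a first step, Propositions \ref{sbfprop}--\ref{linprop} reduce matters to the case that $\{u_i\}, \{v_i\}, \{w_i\}$ are orthogonal bases $u_i = r_i \tilde u_i$, $v_j = \tilde r_j \tilde v_j$, $w_l = \hat r_l \tilde w_l$ with $\{\tilde u_i\},\{\tilde v_j\},\{\tilde w_l\}$ orthonormal. In these coordinates, each summand in \eqref{qsumdef2} factors as a positive coefficient $c_{\alpha\beta\gamma}(\tilde u, \tilde v, \tilde w) := \frac{(s!)^3}{\alpha!\beta!\gamma!} |(\tilde u\cdot\nabla_t)^\alpha Q_s(t,\tilde v_\beta, \tilde w_\gamma)|^2$ times a monomial $\prod_i r_i^{2\alpha_i} \prod_j \tilde r_j^{2\beta_j} \prod_l \hat r_l^{2\gamma_l}$, whose exponent vector ranges over $N_0(Q, \tilde u, \tilde v, \tilde w)$; the right-hand side of \eqref{qnondegen}, squared, is the monomial whose exponent vector is the diagonal point \eqref{spoint}.

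For Part 1 $(\Rightarrow)$, if \eqref{spoint} lies in $\nr(Q)$ then for every orthonormal triple it lies in the convex hull of $N_0$, and Carath\'eodory produces a decomposition of the diagonal point as $\sum_m \lambda_m (\alpha^m, \beta^m, \gamma^m)$ with $(\alpha^m,\beta^m,\gamma^m) \in N_0$. Weighted AM--GM then yields
\[ \prod_i r_i^{2s/\de} \prod_j \tilde r_j^{2s/k} \prod_l \hat r_l^{2s/d} \leq \sum_m \lambda_m \prod_i r_i^{2\alpha^m_i} \prod_j \tilde r_j^{2\beta^m_j} \prod_l \hat r_l^{2\gamma^m_l}, \]
so the squared right-hand side of \eqref{qnondegen} is bounded by $(\mathcal{Q})^2 / \min_m c_{\alpha^m\beta^m\gamma^m}$. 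Compactness of the orthogonal groups, continuity of the $c_{\alpha\beta\gamma}$ in $(\tilde u,\tilde v,\tilde w)$ and in $Q'$ close to $Q$, and stability of Carath\'eodory decompositions under small perturbations then yield a uniform positive $c$.

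For Part 2 $(\Rightarrow)$, failure of \eqref{spoint} to lie in $\nr(Q)$ provides orthonormal bases for which \eqref{spoint} is outside the convex hull of $N_0(Q, \tilde u, \tilde v, \tilde w)$, and the separating hyperplane theorem produces a linear functional $L(\alpha,\beta,\gamma) := \sum_i \lambda_i \alpha_i + \sum_j \mu_j \beta_j + \sum_l \nu_l \gamma_l$ whose value at \eqref{spoint} strictly exceeds its supremum over $N_0$. Because every point of $N_0$ satisfies $|\alpha|=|\beta|=|\gamma|$, $L$ is determined modulo the functionals $|\alpha|-|\beta|$ and $|\alpha|-|\gamma|$; these two degrees of freedom permit the normalization $\sum_j \mu_j = 0 = \sum_l \nu_l$ without altering $L$ on the affine hull of $N_0$. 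The strict separation applied at $(0,0,0)\in N_0$ then forces the value at \eqref{spoint} to equal $(s/\de) \sum_i \lambda_i > 0$, so $\operatorname{tr} D_1 > 0$. Taking $D_1, D_2, D_3$ diagonal in the chosen orthonormal bases with eigenvalues $\lambda_i, \mu_j, \nu_l$, a direct scaling computation shows that the $(\alpha,\beta,\gamma)$ summand of $(\mathcal{Q})^2$ acquires a factor $e^{2\tau L(\alpha,\beta,\gamma)}$ under $e^{\tau D_i}$-scaling while the squared denominator in \eqref{hilbertmumford} scales as $e^{2\tau L(\text{diag})}$; the separation inequality makes every exponent strictly negative, proving \eqref{hilbertmumford}.

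The converse of Part 2 reverses this computation: given $D_i$ satisfying \eqref{hilbertmumford}, their eigenvalues define a linear functional $L$ that separates \eqref{spoint} from $N_0(Q', \tilde u, \tilde v, \tilde w)$, so \eqref{spoint} is not in $\nr(Q)$. Part 1 $(\Leftarrow)$ follows by contrapositive from Part 2, since the degeneration exhibits bases on which the ratio of the two sides of \eqref{qnondegen} tends to $0$, contradicting any uniform $c > 0$. The main obstacle is ensuring that the Carath\'eodory decomposition in Part 1 $(\Rightarrow)$ can be selected stably as $(\tilde u, \tilde v, \tilde w, Q')$ varies over a compact set: one must control which vertices of $N_0$ participate in expressing \eqref{spoint} as a convex combination, a combinatorial stability question that I expect to resolve by restricting $Q'$ to a neighborhood of $Q$ on which the collection of vertices admitting a strictly positive $c_{\alpha\beta\gamma}$ does not shrink, followed by a covering argument on the orthogonal group.
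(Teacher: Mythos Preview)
Your proposal is correct and follows essentially the same route as the paper: reduction to orthogonal bases via the basis-replacement propositions, weighted AM--GM plus compactness of the orthogonal groups for the forward direction of Part~1, the separating-hyperplane construction with the same trace normalization (exploiting $|\alpha|=|\beta|=|\gamma|$) for the forward direction of Part~2, and mutual exclusion of \eqref{qnondegen} and \eqref{hilbertmumford} for the converses. The paper resolves your stated obstacle just as you anticipate---for each orthonormal triple it fixes one convex decomposition (an ``ensemble''), uses continuity to extend that ensemble to an open neighborhood of basis triples, extracts a finite subcover by compactness of the product of orthogonal groups, and then observes that the \emph{sum} over this finite list of ensembles of the minimum nonzero coefficient is a continuous positive function on a compact set, hence uniformly bounded below and Lipschitz-stable under perturbation of $Q$ to nearby $Q'$.
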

\begin{proof} 
By Propositions \ref{sumsqprop} and \ref{linprop} applied to the right-hand side of \eqref{qsumdef}, one may always replace $\{u_i\}_{i=1}^{\de }, \{v_i\}_{i=1}^{k}$, and $\{w_i\}_{i=1}^d$ by orthogonal (not necessarily normalized) bases without changing its value, and moreover the identity \eqref{bssize} guarantees that the values of $|\det \{u_i\}_{i=1}^{d_1}|, |\det \{v_i\}_{i=1}^{k}|$, and $|\det \{w_i\}_{i=1}^d|$ are also preserved during the orthogonalization process. If one takes $u \in \R^{\de }$ so that $u := (||u_1||,\ldots,||u_{\de }||)$ and likewise for $v$ and $w$ and lets $\{\tilde u_i\}_{i=1}^{\de }, \{\tilde v_i\}_{i=1}^{k}$, and $\{\tilde w_i\}_{i=1}^d$ be the orthonormal bases obtained by normalizing $\{u_i\}_{i=1}^{\de }, \{v_i\}_{i=1}^{k}$, and $\{w_i\}_{i=1}^d$, then by \eqref{qsumdef2},
\begin{equation} \begin{split} \left( \mathcal{Q} [\{u_i\}_{i=1}^{\de }, \{v_i\}_{i=1}^k,\{w_i\}_{i=1}^d] \right)^2 & = \\  
1 +  \mathop{\sum_{1 \leq |\alpha| = |\beta| = }}_{|\gamma| \leq \min\{d,k\}}  \frac{|\alpha|! |\beta|! |\gamma|!}{\alpha! \beta! \gamma!} &  \left| u^\alpha v^\beta w^\gamma (\tilde u \cdot \nabla_t)^{\alpha} Q_{|\alpha|}(t, \tilde v_\beta, \tilde w_\gamma ) \right|^2. \end{split} \label{qsumdef3}
\end{equation}
It thus suffices to study the right-hand side \eqref{qsumdef3} for arbitrary orthonormal bases $\{\tilde u_i\}_{i=1}^{\de }, \{\tilde v_i\}_{i=1}^{k}$, $\{\tilde w_i\}_{i=1}^d$ and arbitrary $u \in \R_{> 0}^{\de }, v \in \R^{k}_{> 0}, w \in \R_{> 0}^d$. 

Let $o_s$ denote the point \eqref{spoint} and suppose $o_s \in \nr(Q)$ for some $s \geq 0$. Then for any orthonormal $\{\tilde u_i\}_{i=1}^{\de }, \{\tilde v_i\}_{i=1}^{k}$, and $\{\tilde w_i\}_{i=1}^d$, there is a finite list of multiindices $(\alpha_1,\beta_1,\gamma_1), \ldots, (\alpha_N, \beta_N,\gamma_N)$, each satisfying $|\alpha_i| = |\beta_i| = |\gamma_i| \leq \min \{d,k\}$, and nonnegative $\theta_1,\ldots,\theta_N$ such that \begin{equation} \sum_{i=1}^N \theta_i = 1 \text{ and } \sum_{i=1}^N \theta_i (\alpha_i,\beta_i,\gamma_i) = o_s \label{weightedavg} \end{equation}
and
\begin{equation} \min_{i \, : \, |\alpha_i| \neq 0} |(\tilde u \cdot \nabla_t)^{\alpha_i} Q_{|\alpha_i|} (t,\tilde v_{\beta_i}, \tilde w_{\gamma_i})| > 0. \label{supportpt} \end{equation}
By continuity of $(\tilde u \cdot \nabla_t)^{\alpha_i} Q_{|\alpha_i|} (t,\tilde v_{\beta_i}, \tilde w_{\gamma_i})$ in the $\tilde u_i$, $\tilde v_i$, and $\tilde w_i$, the minimum \eqref{supportpt} must be nonzero over all orthonormal bases $\{\tilde u'_i\}_{i=1}^{\de }, \{\tilde v'_i\}_{i=1}^{k}$, and $\{\tilde w'_i\}_{i=1}^d$ sufficiently near to $\{\tilde u_i\}_{i=1}^{\de }, \{\tilde v_i\}_{i=1}^{k}$, and $\{\tilde w_i\}_{i=1}^d$. If one calls the collection $\{(\theta_i,\alpha_i,\beta_i,\gamma_i)\}_{i=1}^N$ satisfying \eqref{weightedavg} and \eqref{supportpt} for $\{\tilde u_i\}_{i=1}^{\de }, \{\tilde v_i\}_{i=1}^{k}$, and $\{\tilde w_i\}_{i=1}^d$ an ensemble, it follows by compactness of the orthogonal group that there always exists a finite collection of ensembles such that for any orthogonal bases $\{\tilde u_i'\}_{i=1}^{\de }, \{\tilde v_i'\}_{i=1}^{k}$, and $\{\tilde w_i'\}_{i=1}^d$, one of the ensembles $\{(\theta_i',\alpha_i',\beta_i',\gamma_i')\}_{i=1}^{N'}$ in the finite collection will satisfy \eqref{weightedavg} and \eqref{supportpt} for the given bases. In fact, the same compactness argument establishes that at least one of the ensembles in the collection will satisfy \eqref{supportpt} for each fixed choice of bases $\{\tilde u_i'\}_{i=1}^{\de }, \{\tilde v_i'\}_{i=1}^{k}$ even when $Q$ is replaced by a sufficiently small perturbation $Q'$: the left-hand side of \eqref{supportpt} is Lipschitz as a function of $Q$, and since the sum of $\min_{i \, : \, |\alpha_i| \neq 0} |(\tilde u \cdot \nabla_t)^{\alpha_i} Q_{|\alpha_i|} (t,\tilde v_{\beta_i}, \tilde w_{\gamma_i})|$ over our finite list of ensembles is bounded below by a positive quantity as $\{\tilde u_i\}_{i=1}^{\de }, \{\tilde v_i\}_{i=1}^{k}$, and $\{\tilde w_i\}_{i=1}^d$ range over all orthonormal bases, it remains everywhere strictly positive when $Q$ is replaced by any $Q'$ sufficiently near to it.

Moving now to non-normalized orthogonal bases, the reasoning above and the equality \eqref{qsumdef3} imply that there exists a positive constant $c$ such that all $Q'$ sufficiently near $Q$ and all orthogonal bases $\{u_i\}_{i=1}^{\de }, \{v_i\}_{i=1}^k,\{w_i\}_{i=1}^d$ admit an ensemble $\{(\theta_i,\alpha_i,\beta_i,\gamma_i)\}_{i=1}^N$ satisfying \eqref{weightedavg} and
\begin{equation*} 
\begin{split} \left( \mathcal{Q}' [\{u_i\}_{i=1}^{\de }, \{v_i\}_{i=1}^k,\{w_i\}_{i=1}^d] \right)^2 \geq 
c^2 \sum_{i=1}^N & \frac{(|\alpha_i|!)^3}{\alpha_i! \beta_i! \gamma_i!} \left| u^{\alpha_i} v^{\beta_i} w^{\gamma_i}  \right|^2. \end{split}
\end{equation*}
By the AM-GM inequality, then,
\begin{equation*} 
\begin{split} \mathcal{Q}' & [\{u_i\}_{i=1}^{\de }, \{v_i\}_{i=1}^{k},\{w_i\}_{i=1}^d]   \geq
c \prod_{i=1}^n |u^{\alpha_i} v^{\beta_i} w^{\gamma_i}|^{\theta_i} \\
& = c (||u_1|| \cdots ||u_{\de}||)^{\frac{s}{d_1}} (||v_1|| \cdots ||v_k||)^{\frac{s}{k}} (||w_1|| \cdots ||w_d||)^{\frac{s}{d}} \\
& = c |\det \{u_i\}_{i=1}^{\de }|^{\frac{s}{\de }} |\det  \{v_i\}_{i=1}^{k}|^{\frac{s}{k}} |\det \{w_i\}_{i=1}^d|^{\frac{s}{d}}.
\end{split}
\end{equation*}
This establishes the forward direction of the first main conclusion of the lemma.

If instead $o_s \not \in \nr(Q)$, there must be orthonormal bases $\{\tilde u_i\}_{i=1}^{\de }, \{\tilde v_i\}_{i=1}^k$, $\{\tilde w_i\}_{i=1}^d$ such that $o_s$ does not belong to $N_0(Q,\{\tilde u_i\}_{i=1}^{\de }, \{\tilde v_i\}_{i=1}^{k}, \{\tilde w_i\}_{i=1}^d)$ or its convex hull. By the Separating Hyperplane Theorem, there is an $x \in \R^{\de  + k + d}$ such that $x \cdot o_s > x \cdot (\alpha,\beta,\gamma)$ for all $(\alpha,\beta,\gamma) \in N_0(Q,\{\tilde u_i\}_{i=1}^{\de }, \{\tilde v_i\}_{i=1}^{k}, \{\tilde w_i\}_{i=1}^d)$. Because  $(0,0,0)$ belongs to $N_0(Q,\{\tilde u_i\}_{i=1}^{\de }, \{\tilde v_i\}_{i=1}^{k}, \{\tilde w_i\}_{i=1}^d)$, this means that the dot product $x \cdot o_s$ is strictly positive. Moreover, since $|\alpha| = |\beta| = |\gamma|$ for all triples $(\alpha,\beta,\gamma)$ in $N_0(Q,\{\tilde u_i\}_{i=1}^{\de }, \{\tilde v_i\}_{i=1}^{k}, \{\tilde w_i\}_{i=1}^d)$ and because the special point $o_s$ has the property that the sum of its first $\de $ entries equals the sum of the next $k$ and of the final $d$ entries, one may add any vector
\[ (\overbrace{a,\ldots,a}^{\de  \text{ copies}},\overbrace{b,\ldots,b}^{k \text{ copies}},\overbrace{c,\ldots,c}^{d \text{ copies}}) \] such that $a + b + c = 0$ 
to the vector $x$ without changing either $x \cdot o_s$ or $x \cdot (\alpha,\beta,\gamma)$ for any triple $(\alpha,\beta,\gamma) \in N_0(Q,\{\tilde u_i\}_{i=1}^{\de }, \{\tilde v_i\}_{i=1}^{k}, \{\tilde w_i\}_{i=1}^d)$
This allows one to assume without loss of generality that 
\[ x \cdot (\overbrace{0,\ldots,0}^{\de  \text{ copies}},\overbrace{1,\ldots,1}^{k \text{ copies}},\overbrace{0,\ldots,0}^{d \text{ copies}}) = x \cdot (\overbrace{0,\ldots,0}^{\de  \text{ copies}},\overbrace{0,\ldots,0}^{k \text{ copies}},\overbrace{1,\ldots,1}^{d \text{ copies}}) = 0. \]

Now let $D_1$ be the matrix with eigenvectors $\tilde u_1,\ldots,\tilde u_{\de }$ and associated eigenvalues $x^1,\ldots,x^{\de }$; similarly, let $D_2$ have eigenvectors $\tilde v_1,\ldots,\tilde v_k$ with eigenvalues $x^{\de +1},\ldots,x^{\de +k}$. Finally, let $D_3$ have eigenvectors $\tilde w_1,\ldots, \tilde w_d$ with eigenvalues $x^{\de + k + 1},\ldots,x^{\de +k+d}$. For any real $\tau$, the using the bases $\{e^{\tau D_1 } \tilde u_i\}_{i=1}^{\de }$, $\{e^{\tau D_2} \tilde v_i\}_{i=1}^{k}$, $\{e^{\tau D_3} \tilde w_i\}_{i=1}^{d}$ on the left-hand side of \eqref{qsumdef3} yields $u$, $v$, and $w$ equal to $(e^{\tau x^1},\ldots,e^{\tau x^{\de }})$, $(e^{\tau x^{\de  + 1}},\ldots,e^{\tau x^{\de  + k}})$, and $(e^{\tau x^{\de +k+1}},\ldots,e^{\tau x^{\de +k+d}})$, respectively, on the right-hand side. 

As $\tau \rightarrow \infty$, the asymptotic growth of \eqref{qsumdef3} is thus $o(e^{2 \tau x \cdot o_{s}})$ because every nonzero term of \eqref{qsumdef3} will be $O(e^{\tau x \cdot(\alpha,\beta,\gamma)})$ for some triple $(\alpha,\beta,\gamma)$ belonging to  $N_0(Q,\{\tilde u_i\}_{i=1}^{\de }, \{\tilde v_i\}_{i=1}^{k}, \{\tilde w_i\}_{i=1}^d)$. But
\[ |\det \{u_i\}_{i=1}^{\de }|^{\frac{s}{\de }} |\det \{v_i\}_{i=1}^k|^{\frac{s}{k}} |\det \{w_i\}_{i=1}^d|^{\frac{s}{d}} = e^{\tau x \cdot o_{s}}, \] so \eqref{hilbertmumford} must hold. By construction of $x$, $|\det  \{e^{\tau D_2} v_i\}_{i=1}^{k}| = |\det \{v_i\}_{i=1}^k| = 1$ and $|\det \{e^{\tau D_3} w_i\}_{i=1}^d| = |\det \{w_i\}_{i=1}^d| = 1$ for all $\tau$, forcing $|\det \{e^{\tau D_1 } u_i\}_{i=1}^{\de }| \rightarrow \infty$ because the numerator of \eqref{hilbertmumford} is always greater than or equal to $1$.

To conclude, observe that the properties \eqref{hilbertmumford} and \eqref{qnondegen} are mutually exclusive: if \eqref{hilbertmumford} is true, then \eqref{qnondegen} cannot hold even for $Q' = Q$. Thus \eqref{qnondegen} must be equivalent to the claim that $o_s \in \nr(Q)$. Likewise, if \eqref{qnondegen} does hold for some $c > 0$, then \eqref{hilbertmumford} cannot hold and consequently \eqref{hilbertmumford} is equivalent to the claim that $o_s \not \in \nr(Q)$.
\end{proof}

Readers familiar with Geometric Invariant Theory will recognize \eqref{hilbertmumford} as analogous to an instance of the Hilbert-Mumford Criterion. Theorem 2.1 of \cite{mfk1994} is an especially general expression of the criterion. Theorem 5.2 of \cite{birkes1971} is a somewhat more accessible reference from the standpoint of analysis, and Theorem 1.1 of \cite{bl2021} is even more so.

When applying Lemma \ref{newtonlemma} and \eqref{qnondegen} specifically to the trilinear functional \eqref{qdef}, a small technical issue arises from the need to specify a basis of $\ker D_x \phi$. 
\begin{corollary}
Suppose $Q$ is a nondegenerate trilinear functional defined by \eqref{qdef} at some point $(x_*,t_*)$ for some smooth $\R^k$-valued function $\phi(x,t)$. There exist positive constants $c,c'$ such that for any smooth $\R^k$-valued function $\tilde \phi(x,t)$ defined near $(x_*,t_*)$, if $||D_x \phi |_{(x_*,t_*)} - D_x \tilde \phi |_{(x,t)}|| < c'$ (where $||\cdot||$ is the Hilbert-Schmidt norm) and $|\partial^2_{t^i x^\ell} \phi |_{(x_*,t_*)} - \partial^2_{t^i x^\ell} \tilde \phi |_{(x,t)}| < c'$ for each $i \in \{1,\ldots,\de\}$ and $\ell \in \{1,\ldots,n\}$, then the functional $\tilde Q$ defined via \eqref{qdef} at the point $(x,t)$ using the map $\tilde \phi$ generates $\tilde{\mathcal{Q}}$ via \eqref{qsumdef} satisfies \label{stability}
\begin{equation} \begin{split} \tilde {\mathcal Q} [\{u_i\}_{i=1}^{\de }, & \{v_i\}_{i=1}^k,\{w_i\}_{i=1}^d] \\ & \geq c |\det \{u_i\}_{i=1}^{\de }|^{\frac{s}{\de }} |\det  \{v_i\}_{i=1}^{k}|^{\frac{s}{k}}  |\det \{w_i\}_{i=1}^d|^{\frac{s}{d}} \end{split} \label{qnondegen2} \end{equation}
for any bases $\{u_i\}_{i=1}^{\de }, \{v_i\}_{i=1}^k,\{w_i\}_{i=1}^d$.
\end{corollary}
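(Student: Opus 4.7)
The corollary asserts stability of nondegeneracy under joint perturbations of the Jacobian $D_x\phi$ and of the mixed second partials. My plan is to reduce directly to the quantitative perturbation result in Lemma \ref{newtonlemma}(1). Nondegeneracy of $Q$ at $(x_*,t_*)$ is exactly the statement that the point \eqref{diagonal} belongs to $\nr(Q)$; this point is the instance of \eqref{spoint} with $s = dk/n$. Lemma \ref{newtonlemma}(1) therefore provides a constant $c>0$ and a neighborhood $\mathcal{N}$ of $Q$ in the finite-dimensional space of trilinear forms $\R^{\de}\times\R^k\times\R^d \to \R$ such that every $Q' \in \mathcal{N}$ satisfies \eqref{qnondegen} with that $s$ and $c$. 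The entire task is to verify that the form $\tilde Q$ constructed from $\tilde\phi$ at $(x,t)$ lies in $\mathcal{N}$ whenever the Hilbert--Schmidt closeness hypothesis holds with a sufficiently small $c'$.

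The real subtlety is that $Q$ and $\tilde Q$ involve different ambient subspaces: $Q$ uses an orthonormal basis $z_1,\ldots,z_d$ of $\ker D_x\phi|_{(x_*,t_*)}$, while $\tilde Q$ requires an orthonormal basis of $\ker D_x\tilde\phi|_{(x,t)}$. Since the definition \eqref{nr0def} of $\nr(Q)$ is invariant under orthogonal changes of basis in $\R^d$, the conclusion of Lemma \ref{newtonlemma}(1) holds with the same $c$ regardless of which orthonormal basis of $\ker D_x\phi|_{(x_*,t_*)}$ is chosen, and a compactness argument over $O(d)$ lets me harmonize the perturbation neighborhoods into a single $\mathcal{N}$.

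Next I would build an orthonormal basis $\tilde z_1,\ldots,\tilde z_d$ of $\ker D_x\tilde\phi|_{(x,t)}$ that lies within $O(c')$ of the fixed basis $z_1,\ldots,z_d$. Provided $c'$ is small enough to preserve the rank of the Jacobian (so the kernel remains $d$-dimensional), the orthogonal projection onto $\ker D_x\tilde\phi|_{(x,t)}$ depends Lipschitz-continuously on the Jacobian in a Hilbert--Schmidt neighborhood of $D_x\phi|_{(x_*,t_*)}$; projecting each $z_{i''}$ and then applying Gram--Schmidt yields the desired $\tilde z_{i''}$. With $\|\tilde z_{i''}-z_{i''}\|$ and $|\partial^2_{t^i x^\ell}\tilde\phi|_{(x,t)} - \partial^2_{t^i x^\ell}\phi|_{(x_*,t_*)}|$ both $O(c')$, the polynomial (hence Lipschitz) dependence through \eqref{qdef} of the coefficients $\tilde Q_{ii'i''}$ on these inputs forces $\tilde Q$ into $\mathcal{N}$ once $c'$ is small enough. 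Lemma \ref{newtonlemma}(1) then delivers \eqref{qnondegen2} with the uniform constant $c$.

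The principal obstacle is the continuous selection of an orthonormal basis of the perturbed kernel aligned with $z_1,\ldots,z_d$; once this is handled, the remainder is a routine Lipschitz estimate combined with the already-established quantitative stability in Lemma \ref{newtonlemma}. The use of the Hilbert--Schmidt norm in the hypothesis is convenient precisely because it is the natural metric in which the orthogonal projection onto $\ker D_x\tilde\phi$ is Lipschitz near a regular value.
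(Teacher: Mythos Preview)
Your proposal is correct and follows essentially the same approach as the paper: invoke Lemma \ref{newtonlemma}(1) and isolate the one nontrivial step as producing an orthonormal basis of $\ker D_x\tilde\phi|_{(x,t)}$ that is Lipschitz-close to the fixed basis of $\ker D_x\phi|_{(x_*,t_*)}$. The paper accomplishes this basis selection via the Implicit Function Theorem in a dedicated proposition rather than projection-plus-Gram--Schmidt, but the logic is otherwise identical (and your compactness step over $O(d)$ is harmless but unnecessary once a single reference basis $z_1,\ldots,z_d$ is fixed).
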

\begin{proof}
The corollary is almost an immediate consequence of nondegeneracy of $Q$ and \eqref{qnondegen}, but there is one very minor outstanding issue: when the definition \eqref{qdef} is used, it must be possible to choose an orthonormal basis for the kernel of $D_x \tilde \phi$ which is close to whatever particular basis of the kernel of $D_x \phi$ was used to define $Q$ so that the coordinates of $Q$ and $\tilde Q$ are close to one another. This stability is guaranteed by the proposition immediately below.
\end{proof}
\begin{proposition}
Suppose $M$ is a real $k \times n$ matrix of full rank with $n > k$. Let $\{z_i\}_{i=1}^d$ be an orthonormal basis of the kernel of $M$. Then for all $M' \in \R^{k \times n}$ with $||M - M'|| < c$ for some positive $c$ depending on $M$, $M'$ is full rank and there is an orthonormal basis $\{z'_i\}_{i=1}^d$ of the kernel of $M'$ satisfying
\begin{equation} |z_i - z'_i| \leq C ||M - M'|| \label{lipvecs} \end{equation}
for some constant $C$ depending only on $M$. Here $||\cdot||$ indicates the Hilbert-Schmidt norm.
\end{proposition}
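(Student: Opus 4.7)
The plan is to realize the kernel of $M'$ as the graph of a small linear map over $\ker M$, produce a nearly orthonormal basis by orthogonally projecting the $z_i$'s, and then Gram-Schmidt clean up the result, tracking Lipschitz dependence on $M'$ throughout.

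First, I would deal with full-rankness. Since $M$ has full rank $k$, its smallest singular value $\sigma_k(M)$ is positive. By Weyl's inequality, $\sigma_k(M') \geq \sigma_k(M) - \|M - M'\|$, so taking $c < \sigma_k(M)$ ensures that every $M'$ with $\|M - M'\| < c$ is full rank and $\dim \ker M' = d$. This also makes $M'(M')^T$ uniformly invertible on the ball $\{\|M - M'\| < c\}$, with operator norm of its inverse bounded in terms of $M$ alone.

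Next I would show that the orthogonal projections $P$ and $P'$ onto $\ker M$ and $\ker M'$ respectively are Lipschitz in $M'$. Using the Moore-Penrose pseudoinverse for matrices of full row rank one has $P = I - M^T(MM^T)^{-1} M$ and $P' = I - (M')^T(M'(M')^T)^{-1} M'$. Each factor is smooth in $M'$ on the ball above, so $\|P - P'\| \leq C_1 \|M - M'\|$ with $C_1$ depending only on $M$. Set $\tilde z_i := P' z_i$; then $|\tilde z_i - z_i| = |(P' - P) z_i| \leq C_1 \|M - M'\|$, and the Gram matrix $G$ with entries $G_{ij} = \langle \tilde z_i, \tilde z_j\rangle$ satisfies $\|G - I\| \leq C_2 \|M - M'\|$. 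Consequently for $c$ small enough, $G$ is positive definite and $\{\tilde z_i\}_{i=1}^d$ is a basis of $\ker M'$.

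Finally I would orthonormalize by setting $z'_i := \sum_j (G^{-1/2})_{ij} \tilde z_j$. The map $G \mapsto G^{-1/2}$ is real-analytic in a neighborhood of $I$, so $\|G^{-1/2} - I\| \leq C_3 \|G - I\| \leq C_3 C_2 \|M - M'\|$, and then
\[
|z'_i - z_i| \leq |z'_i - \tilde z_i| + |\tilde z_i - z_i| \leq \|G^{-1/2} - I\| \max_j |\tilde z_j| + C_1 \|M - M'\| \leq C \|M - M'\|
\]
with $C$ depending only on $M$. Orthonormality of $\{z'_i\}$ is immediate from the definition of $G^{-1/2}$, and they lie in $\ker M'$ since each $\tilde z_j$ does.

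I do not anticipate any real obstacle here: the only mildly technical point is that one needs the pseudoinverse formulation (or an equivalent resolvent/projection argument) to get quantitative Lipschitz dependence of $P'$ on $M'$, but this is routine once $M'$ is known to stay uniformly full rank.
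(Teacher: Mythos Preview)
Your argument is correct and takes a genuinely different route from the paper. The paper first reduces via the Singular Value Decomposition to the case where $M$ is diagonal and the $z_i$ are the last $d$ standard basis vectors, then sets up the system $M' z'_i = 0$, $z'_i \cdot z'_j = \delta_{ij}$ together with a triangularity constraint ${z'}_i^j = 0$ for $j > k+i$, and applies the Implicit Function Theorem after verifying that the Jacobian of this system is block lower-triangular with invertible diagonal blocks at $(M,\{z_i\})$. Your approach instead writes down the kernel projection explicitly via the pseudoinverse $P' = I - (M')^T(M'(M')^T)^{-1}M'$, projects the $z_i$ onto $\ker M'$, and then corrects by symmetric (L\"owdin) orthogonalization $G^{-1/2}$. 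Your method is more constructive and yields an explicit formula for the $z'_i$, while the paper's IFT argument trades the matrix analysis for a single Jacobian computation and would adapt more readily to settings where no clean projection formula is available. One cosmetic point: the bound $|z'_i - \tilde z_i| \leq \|G^{-1/2} - I\| \max_j |\tilde z_j|$ is off by a dimensional factor (e.g.\ $\sqrt{d}$ via Cauchy--Schwarz), but this is harmlessly absorbed into the constant $C$.
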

\begin{proof}
Because the proposition is invariant under rotations of both $\R^k$ and $\R^n$, by the Singular Value Decomposition, it may be assumed that $M$ has its only nonzero entries along the diagonal and that those entries $\sigma_{ii}$ are real and positive. It may also be assumed that $\{z_i\}_{i=1}^d$ coincide with the final $d = n-k$ coordinate directions in $\R^n$. If $M_0$ is the leftmost $k \times k$ minor of $M$, then $|M_0 v| \geq  |v| \min_i \sigma_{ii}$ for all $v \in \R^k$, and consequently if $M_0'$ is any $k \times k$ matrix such that $||M_0 - M_0'|| < \min_i \sigma_{ii}/2$, then $|M_0' v| \geq |M_0 v| - ||M_0' - M_0|| \cdot |v| \geq |v| \min_{i} \sigma_{ii}/2$, which implies that $M_0'$ is full rank. Therefore for any positive $c < \min_i \sigma_{ii}/2$, if $||M - M'|| < c$, then the leftmost $k \times k$ minor of $M'$ will have full rank. 

Now consider the smooth map
\[ \begin{split} \Phi& (M',\{z_i'\}_{i=1}^d) :=  \begin{pmatrix} M' z'_1 \\  z'_1 \cdot z'_1 \\ M' z'_2 \\ z'_2 \cdot z'_1 \\ z'_2 \cdot z'_2 \\ \vdots \\ M' z'_d \\ z'_d \cdot z'_1 \\ \vdots \\ z'_d \cdot z'_d \end{pmatrix} \end{split} \]
for any $M' \in \R^{k \times n}$ and any $z'_1,\ldots,z'_d$ in $\R^n$. Suppose also that ${z'}^j_i$ is fixed to equal zero whenever $j > k + i$. The Jacobian matrix of $\Phi$ with respect to the collection of variables ${z'}^1_1,\ldots,{z'}^{k+1}_1,\ldots,{z'}^1_d,\ldots,{z'}^n_d$ has special block structure: it has nonzero blocks along the diagonal with sizes $(k+1) \times (k+1)$ through $n \times n$ along with some additional nonzero entries below these blocks (so the Jacobian is block lower-triangular). Enumerating these as blocks $1,\ldots,d$, block $\ell$ has the form
\[ \begin{bmatrix}
M'_{11} & M'_{12} & \cdots & M'_{1(k+\ell)} \\
M'_{21} & M'_{22} & \ddots & \vdots  \\
\vdots & \ddots & \ddots & \vdots  \\
M'_{k 1} & \cdots & \cdots & M'_{k(k+\ell)}  \\
{z'}_1^1 & {z'}^2_1 & \cdots  & {z'}^{k+\ell}_1 \\
\vdots & \vdots & \ddots &  \vdots \\
{z'}^1_{\ell-1} & {z'}^2_{\ell-1} & \cdots & {z'}^{k+\ell}_{\ell-1} \\
2 {z'}^1_{\ell} & 2{z'}^2_{\ell} & \cdots &  2 {z'}^{k+\ell}_{\ell}
\end{bmatrix}. \]
Evaluating this block specifically when $M'  = M$ and $\{z'_i\}_{i=1}^d = \{z_i\}_{i=1}^d$, each block is diagonal with entries $\sigma_{11},\ldots,\sigma_{kk}$ followed by $\ell-1$ diagonal entries of $1$ and a last diagonal entry of $2$. This implies that the Jacobian determinant of $\Phi$ at $M,\{z_i\}_{i=1}^d$ is nonzero. By the Implicit Function Theorem, then, as $M'$ varies near $M$, there is a unique solution $\{z_i'\}_{i=1}^d$ of the system
\[ M' z_i' = 0, i \in \{1,\ldots,d\}, \ z_i' \cdot z_{i'}' = \delta_{ii'}, 1 \leq i \leq i' \leq d, \ {z'}^j_i = 0 \text{ for } j > k+i,\]
which varies smoothly with $M'$ and agrees with $\{z_i\}_{i=1}^d$ when $M' = M$. This implies \eqref{lipvecs}.
\end{proof}

A final observation needed regarding nondegeneracy is to establish an identity expressing the quantity \eqref{qsumdef} not in terms of $Q$ directly but in terms of a dualized object $\Theta : \R^{\de } \times \R^d \rightarrow \R^k$.
Given any basis $\{v_i\}_{i=1}^k$ of $\R^k$, there always exists a dual basis $\{v^*_i\}_{i=1}^k$ with the property that $v_i \cdot v^*_{i'} = \delta_{ii'}$ for every $i,i' \in \{1,\ldots,k\}$. A particularly important consequence of this identity is that $\det \{v_i^*\}_{i=1}^k = (\det \{v_i\}_{i=1}^k)^{-1}$, which can be seen by recognizing that the matrix with $v_i$'s along its rows is the inverse of the matrix with $v^*_{i'}$ along its columns. Another important property of the dual basis is that every $x \in \R^k$ satisfies $x = \sum_{i=1}^k (x \cdot v_i^*) v_i$ (this is immediate when $x = v_i$ for some $i$ and then extends to all of $\R^k$ by linearity and the fact that $\{v_i\}_{i=1}^k$ is a basis). As a consequence, it follows that the matrix with columns
\[ v_{i_1},\ldots, v_{i_\ell}, y_1,\ldots,y_{k-\ell} \]
can be written as a product of matrices $V M$, where the columns of $V$ are exactly given by $v_1,\ldots,v_k$ and $M$ has key structural properties. The first is that columns one through $\ell$ of $M$ have all zero entries with the exceptions that row $i_j$ of column $j$ has value $1$ for each $j \in \{1,\ldots,\ell\}$. For any remaining column with index $i' + \ell$, it will have entries
$v_1^* \cdot y_{i'},\ldots,v_k^* \cdot y_{i'}$. As a consequence, if $\{i'_1,\ldots,i'_{k-\ell}\}$ are distinct indices in $\{1,\ldots,k\}$ such that $\{i_1,\ldots,i_\ell\} \cup \{ i'_1,\ldots,i'_{k-\ell}\}$, then
 \[ |\det (v_{i_1},\ldots,v_{i_\ell}, y_1,\ldots,y_{k-\ell})| = |\det \{v_i\}_{i=1}^k| | \det (v^*_{i'_j} \cdot y_{j'})_{j,j'=1}^{k-\ell}|. \]
 By virtue of this identity, if $Q$ denotes the functional $Q(x,y,z) := y \cdot \Theta(x,z)$, then it follows that
\begin{equation}
\begin{split} \sum_{s=0}^{\min\{d,k\}}  \sum_{i=1}^{\de } \sum_{i'=1}^{k} & \sum_{i''=1}^{d} |(\tup{u_i}{s} \cdot \nabla_t^s) \det ( \tup{v_{i'}}{k-s}, \tup{\Theta(t,w_{i''})}{s}) |^2 = \\
  |\det \{v_i\}_{i=1}^k|^2 & \left[ 1 + \sum_{s=1}^{\min \{d,k\}} \sum_{i=1}^{\de } \sum_{i'=1}^k \sum_{i''=1}^d | (\tup{u_i}{s} \cdot \nabla_t^s) Q_s (t,\tup{v_{i'}^*}{s}, \tup{w_{i''}}{s}) |^2 \right]  \\
  & =  |\det \{v_i\}_{i=1}^k|^2 \left( \mathcal{Q} [\{u_i\}_{i=1}^{\de }, \{v_i^*\}_{i=1}^k,\{w_i\}_{i=1}^d] \right)^2
 \end{split} \label{inversionthing}
 \end{equation}
for $\mathcal{Q}$ exactly as defined in \eqref{qsumdef}. This identity will be of use in Section \ref{collectineq}.

\section{Geometric differential inequalities}
\label{gdineqsec}

This section revisits a family of derivative estimates for ``nice'' functions which first appeared in \cite{gressman2019}. In that paper, the relevant inequalities applied to finite-dimensional families of real-analytic functions. For the $C^\infty$ case of Theorem \ref{characterthm}, the main strategy will be that of polynomial approximation of the mapping $\gamma_t(x)$; for that reason, it will be necessary to reestablish key inequalities from \cite{gressman2019}*{Section 5} to demonstrate that the various constants involved depend in a manageable way on the degrees of the polynomials involved. The full generality of the approach in \cite{gressman2019} will not be needed here, so all functions will simply have open domains in $\R^d$.

Suppose that $U_1 \supset U_2 \supset \cdots$ are open sets in $\R^d$ and that for each integer $N \geq 1$,  $\{X^{(N)}_i\}_{i=1}^d$ is a family of smooth vector fields on $U_N$.  Any $\alpha := ((N_1,\ldots,N_\ell),(i_1,\ldots,i_\ell))$ such that $N_1 < N_2 < \cdots < N_\ell$ and $i_1,\ldots,i_\ell \in \{1,\ldots,d\}$, will be called a generalized multiindex and $X^{\alpha}$ will be defined to equal the differential operator
\begin{equation} X^\alpha := X^{{(N_\ell)}}_{i_\ell} \cdots X^{{(N_1)}}_{i_1} \label{valid} \end{equation}
which acts on smooth functions defined on $U_{N_\ell}$.
The the order of this operator is $\ell$ (which will also be denoted $|\alpha|$) and $N_{\ell}$ will be called the generation of the operator $X^\alpha$.  We also consider $\alpha := (\emptyset,\emptyset)$ to be a generalized multiindex of order $0$ and generation $0$ and define $X^{\alpha}$ to be the identity operator. Systems of such vector fields are exactly the main objects constructed in Theorem \ref{mainineq} below.

The main case of interest will be to apply these vector fields $\{X^{(N)}_i\}_{i=1}^d$ to polynomials on $\R^d$. However, it is desirable to have a slightly broader class of functions available (if for no other reason than to recover the sharp $L^p$--$L^q$ inequality for spherical averages via Theorem \ref{characterthm}\footnote{Throughout the previous work \cite{testingcond}, Proposition 3 is the only place where B\'{e}zout's Theorem is needed, and as noted there, the result applies perfectly well to Nash functions when degrees of polynomials are replaced by complexities. So Theorem 4 of \cite{testingcond} holds in this suitably modified way for Nash functions. A consequence of the proof in Section \ref{suffsec} below will then be that  $L^{p_b}$--$L^{q_b}$ inequalities hold when $\phi$ is Nash.}). The leap from polynomials to slightly more general functions turns out to be a modest one, as the proof of Theorem \ref{mainineq} below already requires (in a seemingly unavoidable way) some such structure. To that end, the class of functions considered will be the Nash functions. An analysis-friendly definition will be given in Section \ref{nash}; To understand Theorem \ref{mainineq}, one needs only to first know the most very basic features. Nash functions are the algebraic real-analytic functions and consequently include polynomials; each Nash function $f$ (aside from the zero function) has an associated nonnegative integer known as its complexity; complexity will be denoted $c(f)$, and for polynomials, the complexity is never greater than the degree.
The main result of this section is the following.
\begin{theorem}
Suppose $U_0 \subset \R^d$ is open and $f : U_0 \rightarrow \R^m$ is real analytic and has a rank $d$ Jacobian matrix $D_x f$ at every $x \in U_0$. Suppose also that each component function $f^j$ is a Nash function on $U_0$ of complexity at most $D$ and that $\wt$ is some nonnegative locally integrable function on $U_0$.  \label{mainineq} Finally, suppose that $E_0 \subset U_0$ is a compact set such that $\sup_{x \in E_0} |f^j(x)| \leq 1$ for each $j$.
 Then for every integer $N \geq 1$, there exists an open set $U_N \subset U_{N-1}$, a compact set $E_N \subset E_{N-1} \cap U_N$, smooth vector fields $\{X^{(N)}_{i}\}_{i=1,\ldots,d}$ defined on $U_N$, and positive constants $c_{N,d}$ depending only on $d,N$ such that the following are true:
\begin{enumerate}
\item For each $N \geq 1$, $\wt(E_N) \geq c_{N,d} m^{-Nd} \wt(E)$, where $\wt $ applied to a set denotes the measure of the set with respect to $\wt \, dx$ (i.e., Lebesgue measure with density $\wt$).
\item For each $N', N$ with $1 \leq N' < N$ and each $x \in U_N$,
\begin{equation} \left. X^{(N)}_i \right|_{x} = \sum_{i'=1}^d c_{i}^{i'} (x) \left. X^{(N')}_{i'} \right|_x \label{basissize0} \end{equation}
for some smooth coefficients $c_{i}^{i'}$ of magnitude at most $2$.
\item For each $N \geq 1$ and each $x \in E_N$, 
\begin{equation} \wt(x) |\det (X^{(N)}_1,\ldots,X^{(N)}_d)| \Big|_x \geq c_{N,d} m^{-Nd} D^{-(2d+2)^N} \wt(E_0). \label{nondegenvecs} \end{equation}
 Here $\det (X^{(N)}_1,\ldots,X^{(N)}_d)$ indicates the determinant of the $d \times d$ matrix whose columns are given by the representations of $X^{(N)}_1,\ldots,X^{(N)}_d$ in the standard coordinates of $\R^d$.
\item For each $j \in \{1,\ldots,m\}$ and each generalized multiindex $\alpha$ of generation at most $N$,
\begin{equation} \sup_{x \in U_N} |X^\alpha f^j(x)| \leq 1. \label{diffineq} \end{equation}
\end{enumerate}
\end{theorem}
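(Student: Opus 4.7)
The plan is to prove Theorem \ref{mainineq} by induction on $N$, constructing the open sets $U_N$, compact sets $E_N$, and vector fields $\{X^{(N)}_i\}_{i=1}^d$ recursively. The key analytic input is a Markov-type inequality for Nash functions: if $g$ is Nash of complexity at most $c$ on $\Omega \subset \R^d$ with $\sup_\Omega |g| \leq 1$, then for any compact $E \subset \Omega$ and any threshold $\lambda$, the $\wt$-measure of the subset of $E$ on which some first-order partial derivative of $g$ exceeds $\lambda$ is bounded above by (a constant depending on $c$ and $d$) times (a decreasing function of $\lambda$) times $\wt(E)$. This is the quantitative Markov-\L{}ojasiewicz principle that underlies \cite{gressman2019}, and the present task is essentially to track its dependence on complexity explicitly.

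For the base case $N = 1$, one starts from the coordinate vector fields $\partial/\partial x^\ell$ and applies the Markov-Nash inequality simultaneously to the $m$ functions $f^j$. This produces a compact $E_1 \subset E_0$ and an open neighborhood $U_1 \supset E_1$ on which $|\partial f^j/\partial x^\ell| \leq M_1$ for some $M_1 = M_1(D, d)$, with $\wt(E_1) \geq c_{1,d} m^{-d} \wt(E_0)$ arising from the product of $md$ Markov losses. Setting $X^{(1)}_i := M_1^{-1} \partial/\partial x^i$ yields $(4)$ at $N = 1$, and $|\det X^{(1)}| = M_1^{-d}$ yields the $D$-dependent determinant bound in $(3)$.

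For the inductive step, the key structural remark is that every generalized multiindex of generation exactly $N$ arises by appending a pair $(N, i)$ to a generalized multiindex of generation at most $N-1$. Thus if $\mathcal{F}_{N-1}$ denotes the finite family $\{X^\alpha f^j\}$ over generalized multiindices $\alpha$ of generation $\leq N-1$ (each bounded by $1$ on $U_{N-1}$ and Nash of some controlled complexity by the inductive hypothesis), it suffices to arrange $|X^{(N)}_i g| \leq 1$ for every $g \in \mathcal{F}_{N-1}$ and every $i$. Applying Markov-Nash to the finite collection $\{X^{(N-1)}_{i'} g\}_{g, i'}$, one extracts a compact $E_N \subset E_{N-1}$ of $\wt$-measure at least $c_{N,d} m^{-Nd} \wt(E_0)$, an open neighborhood $U_N$, and a constant $M_N$ such that all these Nash derivatives are bounded by $M_N$ on $U_N$. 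Defining $X^{(N)}_i := M_N^{-1} X^{(N-1)}_i$ then establishes $(4)$ at generation $N$; iterating yields $X^{(N)}_i = (M_N \cdots M_{N'+1})^{-1} X^{(N')}_i$, so $(2)$ holds with each coefficient at most $1$; and $(3)$ propagates via $|\det X^{(N)}| = M_N^{-d} |\det X^{(N-1)}|$.

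The main obstacle, and the source of the specific exponent $(2d+2)^N$ in $(3)$, is careful control of Nash complexity under iterated differentiation. Each vector field $X^{(N-1)}_{i'}$ has coefficients which are themselves Nash functions whose complexity depends on $D$ and on the previously chosen $M_k$'s, so each differentiation inflates complexity by a summand or factor involving $D$ and $d$. Iterated $N$ times, complexities grow like $D^{C^N}$ for some constant $C = C(d)$, and since the Markov-Nash constant is polynomial in complexity, the precise value $C = 2d+2$ should emerge from this bookkeeping. Carrying this out so that the $M_N$'s accumulate to yield exactly the stated bound $D^{-(2d+2)^N}$ in $(3)$ is the delicate quantitative heart of the argument; the rest is a routine induction.
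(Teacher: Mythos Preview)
Your proposal has a fundamental gap: the ``Markov--Nash inequality'' you invoke does not exist in the form you need. You want, given $|f^j| \leq 1$ on $E_0$, a subset $E_1$ of large $\wt$-measure on which $|\partial f^j / \partial x^\ell| \leq M_1(D,d)$ for a constant depending only on complexity and dimension. But take $d = m = 1$, $f(x) = Kx$ on $E_0 = [-1/K, 1/K]$: here $|f| \leq 1$ on $E_0$, the complexity is $D = 1$, yet $|f'| = K$ everywhere. No universal $M_1(D,d)$ can work. The same obstruction kills the inductive step, and it also explains why your scheme cannot yield conclusion~(3): with $X^{(N)}_i = (M_1 \cdots M_N)^{-1} \partial/\partial x^i$, the quantity $|\det X^{(N)}|$ is a constant independent of $x$, so (3) would force $\wt(x) \gtrsim \wt(E_0)$ pointwise on $E_N$, which has no reason to hold for an arbitrary weight.

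The paper's mechanism is entirely different and does not involve any Markov-type bound. At each stage one selects indices $j_1,\ldots,j_d$ for which the Jacobian $\varphi^{\mathcal J} := \det \partial(f^{j_1},\ldots,f^{j_d})/\partial x$ is within a factor $2$ of maximal on a set $U_{\mathcal J}$ carrying at least $m^{-d}$ of the $\wt$-mass, and then \emph{defines} $X^{(1)}_i$ by Cramer's rule as a ratio of Jacobians so that $X^{(1)}_i f^{j_{i'}} = \delta_{ii'}/2$. This forces $|X^{(1)}_i f^j| = |\varphi^{\mathcal I}/(2\varphi^{\mathcal J})| \leq 1$ automatically on $U_{\mathcal J}$, with no Markov step. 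The price is that $|\det X^{(1)}| = (2^d|\varphi^{\mathcal J}|)^{-1}$ now varies with $x$; the bound~(3) then comes from a Chebyshev argument relating $|\varphi^{\mathcal J}(x)|/\wt(x)$ to $\int_{E_0} |\varphi^{\mathcal J}|$, and this last integral is controlled via change of variables and B\'ezout's theorem for Nash functions. The exponent $(2d+2)^N$ arises because $X^{(N)}_i g$ is a ratio of two Jacobians involving $d+1$ functions (Lemma~\ref{ratiolemma}), each of complexity $\leq D^{(2d+2)^{N-1}}$, giving complexity $\leq (D^{(d+1)(2d+2)^{N-1}})^2 = D^{(2d+2)^N}$. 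Your bookkeeping intuition about the exponent was on the right track, but the underlying construction is not a rescaling of coordinate fields.
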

With the exception of \eqref{nondegenvecs}, all conclusions of Theorem \ref{mainineq} will be immediate consequences of Lemma \ref{geocorr} below. This lemma has the pleasant property that it applies perfectly well to any smooth functions $f$, and so there is no need to appeal to any algebraic notions in Section \ref{vfc}. In the place of the constants in \eqref{nondegenvecs}, Lemma \ref{geocorr} will bound $|\det (X^{(N)}_1,\ldots,X^{(N)}_d)|$ from below in terms of a relevant integral expression. This expression is then estimated in Section \ref{nash} by lifting all the functions $X^{\alpha} f^j$ to some higher-dimensional space in such a manner that they become polynomials and B\'{e}zout's Theorem applies. Even if each $f^j$ is itself already a polynomial, such a lifting is necessary because the definition of the vector fields $X^{(N)}_i$ involves dividing by polynomials, which is simply not possible to handle in a straightforward way without enlarging the class of functions considered beyond polynomials.

\subsection{Vector field construction} 
\label{vfc}
As noted above, this section contains the proof of Lemma \ref{geocorr}, which differs from Theorem \ref{mainineq} primarily in the presentation of \eqref{nondegenvecs}, which is replaced by the somewhat more opaque inequality \eqref{volume}.
\begin{lemma}
Let $U_0 \subset \R^d$ be open and suppose that $f : U_0 \rightarrow \R^m$ is smooth and has Jacobian $D_x f$ which is everywhere rank $d$. Let $\wt$ be nonnegative and locally integrable on $U_0$ and let $E_0 \subset U_0$ be compact and satisfy $\sup_{x \in E_0} |f^j(x)| \leq 1$ for all $j=1,\ldots,m$. For each integer $N \geq 1$, there exists an open set $U_N \subset U_{N-1}$, a compact set $E_N \subset E_{N-1} \cap U_N$, smooth vector fields $\{X^{(N)}_{i}\}_{i=1}^d$ defined on $U_N$, and a positive constant $c_{N,d}$ depending only on $N$ and $d$ such that the following are true: \label{geocorr}
\begin{enumerate}
\item For each $N \geq 1$, \begin{equation} \wt(E_N) \geq c_{N,d} m^{-Nd} \wt(E_0). \label{setsize} \end{equation}
\item For each $N', N$ with $1 \leq N' < N$ and each $x \in U_N$,
\begin{equation} \left. X^{(N)}_i \right|_{x} = \sum_{i'=1}^d c_{i}^{i'}(x) \left. X^{(N')}_{i'} \right|_x \label{basissize} \end{equation}
for some smooth coefficients $c_{i}^{i'}$ of magnitude at most $2$.
\item For each $N \geq 1$, there exist generalized multiindices $\alpha_1,\ldots,\alpha_d$ of generation at most $N-1$ and $j_1,\ldots,j_d \in \{1,\ldots,m\}$ such that
\begin{align}
 \wt(x) \left. |\det (X_1^{(N)},\ldots,X_{d}^{(N)})| \right|_x &  \int_{E_{N-1}} \left| \det \frac{\partial ( X^{\alpha_1} f^{j_1}, \ldots, X^{\alpha_d} f^{j_d})}{\partial x} \right| dx \nonumber \\ & \geq \frac{c_{N,d}}{2^d} m^{-Nd} \wt(E_0)  \label{volume} \end{align}
for all $x \in E_N$. Likewise for these same $\alpha_{i'}$ and $j_{i'}$,
\begin{equation} X_i^{(N)} X^{\alpha_{i'}} f^{j_{i'}} = \frac{\delta_{ii'}}{2} \label{kdelta2} \end{equation}
at every point of $U_N$ and for every $i,i' \in \{1,\ldots,d\}$.
\item For each $N \geq 1$, each generalized multiindex $\alpha$ of generation at most $N$, and each $j \in \{1,\ldots,m\}$,
\begin{equation} \sup_{x \in U_N} |X^{\alpha} f^j (x) | \leq 1. \label{sizeest} \end{equation}
\end{enumerate}
\end{lemma}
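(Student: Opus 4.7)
The plan is to proceed by induction on $N$. The base case $N=0$ is immediate, taking $U_0, E_0$ as given with no vector fields to construct. For the inductive step, suppose $U_{N-1}$, $E_{N-1}$, and $\{X_i^{(N')}\}_{i=1}^d$ for $N' < N$ are in hand satisfying \eqref{setsize}--\eqref{sizeest} at their respective levels. I would introduce the finite family
\[ \mathcal{F}_{N-1} := \{X^\alpha f^j : \alpha \text{ of generation} \leq N-1, \ j \in \{1,\ldots,m\}\}, \]
whose cardinality is at most $M := m(d+1)^{N-1}$ (from $\sum_{\ell=0}^{N-1}\binom{N-1}{\ell}d^\ell$). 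Every element of $\mathcal{F}_{N-1}$ is bounded by $1$ on $U_{N-1}$ by the inductive \eqref{sizeest}.

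The construction of $X_i^{(N)}$ then proceeds via a pigeonhole over ordered $d$-tuples. For each $\mathbf{g} = (g_1,\ldots,g_d) \in \mathcal{F}_{N-1}^d$, set $J_\mathbf{g}(x) := \det(\partial g_i/\partial x^k)$ and
\[ A_\mathbf{g} := \{x \in E_{N-1} : |J_\mathbf{g}|(x) \geq |J_{\mathbf{g}'}|(x) \text{ for every tuple } \mathbf{g}'\}. \]
Because $\mathcal{F}_{N-1}$ contains $f^1,\ldots,f^m$ and $D_x f$ has rank $d$ on $E_{N-1}$, the pointwise maximum is strictly positive, so $|J_\mathbf{g}| > 0$ on $A_\mathbf{g}$. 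Pigeonholing over the $M^d$ tuples selects a distinguished $\mathbf{g} = (X^{\alpha_1}f^{j_1},\ldots,X^{\alpha_d}f^{j_d})$ with $\wt(A) \geq M^{-d}\wt(E_{N-1})$, writing $A := A_\mathbf{g}$. On $\{|J_\mathbf{g}|>0\}$, define $X_i^{(N)}$ as the unique smooth vector field satisfying $X_i^{(N)} g_{i'} = \delta_{ii'}/2$; this makes \eqref{kdelta2} automatic and yields $|\det(X_1^{(N)},\ldots,X_d^{(N)})| = 2^{-d}|J_\mathbf{g}|^{-1}$. The key size estimate comes from Cramer's rule: for any $g \in \mathcal{F}_{N-1}$,
\[ X_i^{(N)} g = \frac{1}{2}\cdot\frac{J_{(g_1,\ldots,g_{i-1},g,g_{i+1},\ldots,g_d)}}{J_\mathbf{g}}, \]
whose numerator is the Jacobian of another tuple in $\mathcal{F}_{N-1}^d$, hence dominated in absolute value by $|J_\mathbf{g}|$ on $A$; this forces $|X_i^{(N)}g| \leq 1/2$ there. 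Taking $U_N$ to be the open subset of $U_{N-1}$ where $|J_\mathbf{g}|>0$ and $|X_i^{(N)} g| < 1$ for every $g \in \mathcal{F}_{N-1}$ and $i$, one has $A \subset U_N$ and the derivative bound \eqref{sizeest}, since every generation-$N$ multiindex has the form $X_i^{(N)}X^{\alpha'}$ with $\alpha'$ of generation $\leq N-1$. The change-of-basis bound \eqref{basissize} follows by expanding $X_i^{(N)} = \sum_{i''}c_i^{i''}X_{i''}^{(N')}$ and applying both sides to the characterizing functions $g_{i''}^{(N')} \in \mathcal{F}_{N'-1} \subset \mathcal{F}_{N-1}$ from step $N'$; one obtains $c_i^{i''} = 2\,X_i^{(N)}g_{i''}^{(N')}$, with $|c_i^{i''}| \leq 2$ by the just-established \eqref{sizeest}.

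Finally, I would perform a second, continuous pigeonhole on sublevel sets of $|J_\mathbf{g}|$ to obtain \eqref{setsize} and the quantitative \eqref{volume}. With $K := \int_{E_{N-1}}|J_\mathbf{g}|\,dx$ and $\kappa = c_{N,d} := c_{N-1,d}(d+1)^{-(N-1)d}/2$, set
\[ E_N := \left\{x \in A : |J_\mathbf{g}|(x) \leq K\wt(x)/(\kappa\,m^{-Nd}\wt(E_0))\right\}. \]
By construction $\wt(x)|\det(X_1^{(N)},\ldots,X_d^{(N)})|\cdot K \geq 2^{-d}\kappa m^{-Nd}\wt(E_0)$ on $E_N$, which is \eqref{volume}. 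The complementary set $A \setminus E_N$ satisfies $\wt(x) < \kappa m^{-Nd}\wt(E_0)|J_\mathbf{g}|/K$, so integrating against Lebesgue measure yields $\wt(A \setminus E_N) \leq \kappa m^{-Nd}\wt(E_0)$. Since the first pigeonhole gives $\wt(A) \geq 2\kappa m^{-Nd}\wt(E_0)$ by the choice of $\kappa$, one obtains $\wt(E_N) \geq \kappa m^{-Nd}\wt(E_0) = c_{N,d}m^{-Nd}\wt(E_0)$, after which inner regularity of the Radon measure $\wt\,dx$ permits passing to a compact $E_N$ of almost-full measure. The main technical obstacle is the coordinated bookkeeping of these two pigeonholes: the discrete tuple-pigeonhole contributes exactly the $m^{-d}$ factor needed per step, and the sublevel-set pigeonhole must be calibrated so that $\kappa$ absorbs an additional factor of $2$ against the inductive constant. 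The geometric identity $|X_i^{(N)}g| \leq 1/2$ on $A$, which rests entirely on the ``best-tuple'' definition of $A$ and Cramer's rule, is precisely what permits this bookkeeping to close with constants depending only on $N$ and $d$.
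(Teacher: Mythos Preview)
Your argument is correct and follows essentially the same route as the paper: pigeonhole over $d$-tuples of the functions $X^\alpha f^j$ to select a dominant Jacobian, define the new vector fields via Cramer's rule with the $1/2$ normalization so that \eqref{kdelta2} and \eqref{sizeest} are automatic, then run a Chebyshev-type sublevel-set argument against the weight $\wt$ to extract $E_N$ and \eqref{volume}. The only organizational difference is that the paper isolates the $N=1$ case and then applies it inductively to the augmented map $f^{(N)}$, whereas you carry out the full inductive step directly; relatedly, the paper builds the factor of $2$ into the definition of the open set $U_{\mathcal J}$ (via the strict inequality $\max_{\mathcal I}|\varphi^{\mathcal I}| < 2|\varphi^{\mathcal J}|$) so that $|X_i^{(N)}g|\le 1$ holds on $U_N$ from the outset, while you first obtain $|X_i^{(N)}g|\le 1/2$ on the closed set $A$ and then pass to the open sublevel set $\{|X_i^{(N)}g|<1\}$. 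Both devices serve the same purpose and the bookkeeping of constants matches.
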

\begin{proof}
The case $N=1$ captures most of the difficulty.  Given $\mathcal J := (j_1,\ldots,j_{d}) \in \{1,\ldots,m\}^{d}$, let $\jac{\mathcal J}(x)$ be defined to equal the Jacobian determinant
\[ \det \frac{\partial (f^{j_1},\ldots,f^{j_d})}{\partial x} (x). \]
Because $D_x f$ has full rank at every point $x \in U_0$, for any $x$, there is at least one $\mathcal J$ such that $\jac{\mathcal J}(x)$ is nonzero.   For each $\mathcal J \in \{1,\ldots,m\}^d$, define
\[ U_{\mathcal J} := \set{ x \in U_0}{ \max_{ \mathcal I \in \{1,\ldots,m\}^{d}} \left|\jac{\mathcal I} (x) \right| < 2 \left|\jac{\mathcal J} (x) \right| }. \]
Each $U_{\mathcal J}$ is open because $f$ has continuous partial derivatives, and the union $\bigcup_{\mathcal J} U_{\mathcal J}$ equals $U_0$ because every $x \in U_0$ must belong the set $U_{\mathcal J}$ for the particular $\mathcal J$ which attains the nonzero maximum $\max_{\mathcal I \in \{1,\ldots,m\}^{d}} |\jac{\mathcal I} (x) |$ at $x$.

Given the set $E_0$, fix a choice of $\mathcal J$ once and for all such that $\wt(E_0 \cap U_{\mathcal J}) \geq m^{-d} \wt(E_0)$. Such a $\mathcal J$ must exist because the union of all $U_{\mathcal J}$ contains $E_0$ and the number of such sets which are nonempty never exceeds $m^d$. Let $U_1 := U_{\mathcal J}$ for the fixed $\mathcal J$. For each $i \in \{1,\ldots,d\}$, let $X_1^{(1)},\ldots,X_d^{(1)}$ be vector fields on $U_1$ defined by
\begin{equation} X_i^{(1)} \varphi := (-1)^{i-1} \frac{1}{2 \jac{\mathcal J}(x)} \det \frac{\partial (\varphi, f^{j_1},\ldots,\widehat{f^{j_i}},\ldots,f^{j_d})}{\partial x}(x). \label{vfdef} \end{equation}
(Here $\widehat{f^{j_i}}$ denotes omission.)
These vector fields are smooth on $U_{\mathcal J}$ because $\jac{\mathcal J}$ is necessarily nonzero on $U_1$. By definition of $U_\mathcal J$, $|X_i^{(1)} f^j | \leq 1$
at every point $x \in U_{\mathcal J}$ for any $i \in \{1,\ldots,d\}$ and any $j \in \{1,\ldots, m\}$ because the magnitude of $X_i^{(1)} f^j$ is equal to a ratio $| \jac{\mathcal I} / (2 \jac{\mathcal J})|$ with $\mathcal I := \{j,j_1,\ldots,\widehat{j_i},\ldots,j_d\}$. This gives \eqref{sizeest}. It is also immediate that $X_i^{(1)} f^{j_{i'}} = \delta_{ii'}/2$
for each $i,i' \in \{1,\ldots,d\}$ at every point of $U_{\mathcal J}$, which gives \eqref{kdelta2} when $\alpha_1,\ldots,\alpha_d$ are trivial and $j_1,\ldots,j_d$ are the elements of $\mathcal J$. An important consequence of \eqref{kdelta2} is that
\begin{equation}  \left| \jac {\mathcal J} \det (X_1^{(1)},\ldots,X_d^{(1)})   \right| \Big|_x = 2^{-d} \text{ for all } x \in U_1, \label{size1} \end{equation}
which holds because the Jacobian matrix $\partial (f^{j_1},\ldots,f^{j_d})/\partial x$ times the matrix with columns $X_1^{(1)},\ldots,X_1^{(d)}$ is the matrix whose $(i',i)$ entry is $X_{i}^{(1)} f^{j_{i'}}$, so taking determinants gives exactly \eqref{size1} by definition of $\varphi^{\mathcal{J}}$.

If $\wt(E_0) = 0$, let $E_1$ be the empty set, in which case \eqref{volume} holds vacuously.  Otherwise, assume that $\wt(E_0) > 0$ (so that $\wt(E_0 \cap U_{\mathcal J}) > 0$ as well) and let $E' \subset E_0 \cap U_{\mathcal J}$ be the set of points $x \in U_1$ for which $\wt(x) > 0$ and
\begin{equation} \frac{ \left| \jac{\mathcal J}(x) \right|}{\wt(x)} <  \frac{2}{\wt(E_0 \cap U_{\mathcal J} )} \int_{E_0 \cap U_{\mathcal J}} | \jac{\mathcal J}|. \label{size2} \end{equation}
If $E''$ is the subset of $E_0 \cap U_{\mathcal J}$ on which $\wt(x) > 0$ and \eqref{size2} fails, then by Chebyshev's inequality,
\[  \left( \frac{2}{\wt(E_0 \cap U_{\mathcal J} )} \int_{E_0 \cap U_{\mathcal J}} | \jac{\mathcal J}| \right) \int_{E''} \wt(x) dx \leq \int_{E''} \wt(x) \frac{ \left| \jac{\mathcal J}(x) \right|}{\wt(x)} dx,\]
i.e.,
\begin{equation}
\frac{2 \wt(E'')}{\wt(E_0 \cap U_{\mathcal J})} \int_{E_0 \cap U_{\mathcal J}} |\jac{\mathcal J}|  
\leq \int_{E''} |\jac{\mathcal J}|. \label{wanttodivide}
\end{equation}
Trivially one has that $\int_{E_0 \cap U_{\mathcal J}} |\jac{\mathcal J}| \geq \int_{E''} |\jac{\mathcal J}|$, and $\int_{E_0 \cap U_{\mathcal J}} |\jac{\mathcal J}|$ must be strictly positive because $\jac{\mathcal J}$ is never zero on $E_0 \cap U_{\mathcal J}$ and $E_0 \cap U_{\mathcal J}$ has positive Lebesgue measure (since its measure with weight $\wt$ is also positive). This allows one to divide both sides of \eqref{wanttodivide} by $\int_{E_0 \cap U_{\mathcal J}} |\jac{\mathcal J}|$ to conclude that
$\wt(E'') \leq \wt(E_0 \cap U_{\mathcal J})/2$. Moreover, because $E_0 \cap U_{\mathcal J}$ is a disjoint union of $E'$, $E''$, and a set where $\wt \equiv 0$, one now knows that $\wt(E') > 0$. This forces $\int_{E'} | \jac{\mathcal J}| > 0$ as well, 
so that
\[ \int_{E_0 \cap U_{\mathcal J}} |\jac{\mathcal J}| > \int_{E''} |\jac{\mathcal J}|, \]
i.e., the inequality observed a moment ago must actually be strict.
So in fact $\wt(E'') < \wt(E_0 \cap U_{\mathcal J})/2$ and $\wt(E') > \wt(E_0 \cap U_{\mathcal J})/2$. By inner regularity, $E'$ must contain a compact subset $E_1$ such that $\wt(E_1) \geq \wt(E_0 \cap U_{\mathcal J})/2$, which is itself greater than $m^{-d} \wt(E_0)/2$, giving \eqref{setsize}.  Combining \eqref{size1} and \eqref{size2} gives
\begin{align*} m^{-d} \wt(E_0) & \leq \wt(E_0 \cap U_{\mathcal J})  <  \frac{2 \wt(x)}{|\varphi^{\mathcal J}(x)|} \int_{E_0 \cap U_{\mathcal J}} | \jac{\mathcal J}| \\
= 2^{d+1}&  \wt(x) \left| \det (X^{(1)}_1,\ldots,X^{(1)}_d) \right| \Big|_x \int_{E_0 \cap U_{\mathcal J}} \left| \det \frac{\partial (f^{j_1},\ldots,f^{j_d})}{\partial x} \right| dx \end{align*}
at all points $x \in E_1$, which implies \eqref{volume} (with $\alpha_1,\ldots,\alpha_d)$ taken to be trivial).
This completes the proof of the lemma when $N=1$ (with $c_{1,d} := 1/2$) because \eqref{basissize} is vacuous in this case.

Assuming that the lemma holds up to some value of $N$, the conclusions for $N+1$ (with the exception of \eqref{basissize}) follow from applying the $N=1$ case just established to the new map $f^{(N)}$ on $U_N$ which has coordinates $X^{\alpha} f^j$ for all possible $j \in \{1,\ldots,d\}$ and all $\alpha$ of generation at most $N$. The number of such functions is exactly $m (d+1)^N$ (because at each generation, one may choose to differentiate by any one of the $d$ vector fields of that generation or one may choose not to differentiate at all).  Given $U_N$ and $E_N$ from the previous step, there must exist $U_{N+1} \subset U_N$ and a compact set $E_{N+1} \subset E_N \cap U_{N+1}$ such that
$\wt(E_{N+1}) \geq \wt(E_N) / (2 m^d (d+1)^{N d})$ and
\begin{align*} \wt(x) \left| \det(X^{(N+1)}_1,\ldots,X^{(N+1)}_d) \right| \Big|_x & \int_{E_{N}} \left| \det \frac{\partial (X^{\alpha_1} f^{j_1},\ldots,X^{\alpha_d} f^{j_d})}{\partial x} \right| dx \\ & \geq \frac{\wt(E_N)}{2^{d+1} m^d (d+1)^{Nd}}
\end{align*}
for all $x \in E_{N+1}$.  The identity
\[ \frac{1}{2 m^d(d+1)^{(N-1)d}} \cdot \frac{1}{2 m^d(d+1)^{(N-2)d}} \cdots \frac{1}{2 m^d} = \frac{1}{2^N m^{dN} (d+1)^{d N(N-1)/2}} \]
gives \eqref{setsize} and \eqref{volume} with $c_{N,d} := 2^{-N} (d+1)^{-d N(N-1)/2}$. Both \eqref{kdelta2} and \eqref{sizeest} follow immediately by induction as well.

 The only remaining conclusion to establish is the relationship \eqref{basissize} between the vector fields $\{X^{(N)}_i\}_{i=1}^d$ and $\{ X^{(N')}_{i} \}_{i=1}^d$ for $N' < N$. Because there exist $\alpha_1,\ldots,\alpha_d$ of generation at most $N'-1$ and $j_1,\ldots,j_d \in \{1,\ldots,d\}$ such that
\[ X^{(N')}_{i} X^{\alpha_{i'}} f^{j_{i'}} = \frac{1}{2} \delta_{ii'} \]
on $U_{N'}$, applying both sides of the identity \eqref{basissize} to $2 X^{\alpha_{i'}} f^{j_{i'}}$ gives that
$c_{i}^{i'} = 2 X^{(N)}_{i} X^{\alpha_{i'}} f^{j_{i'}}$ on $U_N$ for each $i,i' \in \{1,\ldots,d\}$. By \eqref{sizeest}, $|c_{i}^{i'}| \leq 2$ on $U_{N}$.
\end{proof}

\subsection{Nash functions and polynomial lifting}
\label{nash} 

To prove Theorem \ref{mainineq}, the only difficulty not resolved by Lemma \ref{geocorr} is the estimation of the integral appearing on the left-hand side of \eqref{volume}. The size of the integral of a Jacobian determinant is, in  broad terms, controlled by the maximal number of nondegenerate solutions of an underlying system of equations, and in many similar problems, this issue is resolved in one way or another by an application of B\'{e}zout's Theorem. In this case, there is additional difficulty caused by the fact that even when the component functions $f^j$ are polynomial, the derivatives $X^{\alpha} f^j$ need not be, so B\'{e}zout's theorem is not directly applicable when $N \geq 2$. As it turns out, however, there is a somewhat broader class of functions than polynomials, namely Nash functions, which has the very desirable property that it remains closed under differentiation by the vector fields $X^{(N)}_i$. And because \textit{some} extension beyond the class of polynomials is needed anyway, there is very little added complexity in working directly with Nash functions at every step.

Informally, a Nash function on an open subset $U \subset \R^d$ can be understood as the restriction to a suitably nice submanifold of $U \times \R^{M}$ of a function which is a polynomial on the larger space. To that end,
suppose that $U \subset \R^d$ is open. A real analytic map $\Phi : U \rightarrow \R^M$ will be called an algebraic lifting map when there exists a polynomial map $p : \R^d \times \R^M \rightarrow \R^M$ such that $p(x,\Phi(x)) \equiv 0$ for all $x \in U$ and $\det D_y p(x,y) |_{y = \Phi(x)}$ is not identically zero on any open subset of $U$. The map $p$ will be called the associated annihilating map.  A function $f$ on $U$ will be called a Nash function when it lifts via some algebraic lifting map $\Phi$ to a polynomial, which means that there exists a polynomial $F$ on $\R^d \times \R^M$ such that $f(x) = F(x,\Phi(x))$ for all $x \in U$. Rational functions are Nash on domains where the denominator is nonvanishing, as are smooth algebraic functions ($\sqrt{1+x^2}$ is Nash on the entire real line but $\sqrt[3]{x}$ is not).

This differs somewhat from the traditional definition of Nash functions, which are more commonly defined to be those real analytic functions $f : U \rightarrow \R$ which satisfy $q(x,f(x)) = 0$ on $U$ for some polynomial $q(x,z)$ of $d+1$ variables for which $D_z q$ is not identically zero. The first definition above will be more convenient than the classical one for our purposes, but both are entirely equivalent.
\begin{proposition}
Let $U \subset \R^d$ be open and connected and let $f : U \rightarrow \R$ be a real analytic function. Then $f$ lifts to a polynomial $F$ for some algebraic lifting map $\Phi : U \rightarrow \R^M$ with associated annihilating map $p(x,y)$ if and only if there exists a real polynomial $q(x,z)$ of $d+1$ variables with $D_z q$ not identically zero such that $q(x,f(x)) \equiv 0$ on $U$. \label{itsnash}
\end{proposition}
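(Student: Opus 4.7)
The plan is to prove the two implications separately, with the backward direction being largely formal and the forward direction relying on elimination theory.

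For the backward implication (existence of $q$ implies existence of a lift): given $q(x,z) \in \R[x,z]$ with $D_z q \not\equiv 0$ as a polynomial and $q(x,f(x)) \equiv 0$ on $U$, I would set $M=1$, $\Phi(x) := f(x)$, $F(x,y) := y$, and take $p(x,y)$ to be a suitable irreducible factor of $q$. The reason for passing to an irreducible factor is that $p$ itself must satisfy the nondegeneracy condition $\det D_y p(x,\Phi(x)) = \partial_z p(x,f(x)) \not\equiv 0$ on every open subset of $U$. First I would factor $q = q_1 \cdots q_k$ in $\R[x,z]$; since $\prod q_i(x,f(x)) \equiv 0$ on the connected open set $U$, the identity theorem for real analytic functions forces at least one factor $q_0 := q_i$ to vanish identically on $U$, and this $q_0$ cannot have $z$-degree zero (else a nonzero polynomial in $x$ alone would vanish on the open set $U$). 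The Euclidean algorithm in $\R(x)[z]$, applied to the coprime pair $q_0$ and $\partial_z q_0$ (coprime by irreducibility and positive $z$-degree), produces polynomials $a,b \in \R[x,z]$ and a nonzero $r \in \R[x]$ with $a q_0 + b\, \partial_z q_0 = r$; evaluating at $z = f(x)$ shows $\partial_z q_0(x,f(x))$ is nonvanishing wherever $r(x) \neq 0$, i.e., on a dense open subset of $U$.

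For the forward implication (existence of a lift implies existence of $q$): given $\Phi,p,F$ as in the hypothesis, the plan is to eliminate the variables $y_1,\ldots,y_M$ from the system $p_1 = \cdots = p_M = 0$, $z = F(x,y)$. Consider the ideal $J := (p_1,\ldots,p_M, z - F(x,y)) \subset \R[x,y,z]$ and its elimination ideal $J \cap \R[x,z]$. Any element of this elimination ideal vanishes on the tuple $(x,\Phi(x),f(x))$ for $x \in U$ and hence annihilates $f$. The task then reduces to showing that $J \cap \R[x,z]$ contains a nonzero polynomial. I would argue this by passing to the complex picture: the hypothesis that $\det D_y p(x,\Phi(x))$ does not vanish identically on any open subset means that the complexified graph of $\Phi$ lies on an irreducible component of $V_{\C}(p)$ of complex dimension $d$ which dominates the $x$-hyperplane; adjoining the graph of $F$ preserves the dimension; projecting the result to the $(x,z)$-hyperplane yields a subset of dimension at most $d$ in $\C^{d+1}$, hence contained in the zero set of some nonzero polynomial in $\R[x,z]$ (after standard Galois descent from $\C$ to $\R$). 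An alternative but equivalent route is commutative-algebraic: by the Jacobian criterion, the localized ring $\R(x) \otimes_{\R[x]} \R[x,y,z]/(p,z-F)$ is a finite-dimensional $\R(x)$-algebra, so $z$ satisfies a monic polynomial over $\R(x)$, and clearing denominators produces the desired $q$. The condition $D_z q \not\equiv 0$ is then automatic: otherwise $q$ would depend only on $x$ and $q(x) \equiv 0$ on the open set $U$ would force $q \equiv 0$ as a polynomial, a contradiction.

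The main obstacle I expect is in the forward direction: bridging between the analytic hypothesis (nonvanishing of $\det D_y p$ on open subsets of the real graph) and the algebraic conclusion about $V_{\C}(p)$ having an irreducible component of the correct dimension $d$ requires careful dimension bookkeeping and descent from $\C$ to $\R$. Once this algebraic foothold is in place, the remaining elimination step is standard.
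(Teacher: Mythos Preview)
Your proposal is correct and follows essentially the paper's strategy. For the backward direction, the paper takes a slightly shorter route than yours: rather than factoring $q$ into irreducibles and invoking a resultant, it simply observes that if $D_z q(x,f(x))\equiv 0$ then $D_z q$ itself is a lower-$z$-degree annihilator (with $D_z^2 q\not\equiv 0$), so one may inductively reduce until $D_z q(x,f(x))\not\equiv 0$ and then take $p=q$ directly. For the forward direction, the paper works with the prime ideal $I\subset\C[x,y,z]$ of \emph{all} polynomials vanishing on the analytic graph $\{(x,\Phi(x),f(x))\}$ rather than the ideal $J$ generated by $p$ and $z-F$; this makes $V(I)$ irreducible from the outset, and the dimension bound $\dim V(I)\le d$ follows from the Jacobian rank at a single point where $\det D_y p\neq 0$, after which a nontrivial element of $I\cap\C[x,z]$ exists by elimination and one takes its real or imaginary part.

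One caution: your ``alternative but equivalent'' commutative-algebraic route is not actually equivalent and has a gap as stated. The Jacobian criterion tells you only that $V(p)$ is smooth of dimension $d$ \emph{near the graph of $\Phi$}; it does not rule out other components of $V(p)$ with positive-dimensional generic fiber over $x$, and in that case $\R(x)\otimes_{\R[x]}\R[x,y,z]/(p,z-F)$ need not be finite over $\R(x)$. For instance, with $d=1$, $M=2$, $p_1=y_1(y_1-1)$, $p_2=y_1 y_2$, and $\Phi\equiv(1,0)$, the hypothesis $\det D_y p(x,\Phi(x))\neq 0$ holds, yet the quotient contains a copy of $\R(x)[y_2]$. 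Your primary route via the irreducible component (equivalently, the Zariski closure of the graph) avoids this and is the one to keep.
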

\begin{proof}
Supposing that such a $q(x,z)$ exists, it may be assumed without loss of generality that $D_z q (x,f(x))$ is not identically zero on $U$, as if this were the case, the polynomial $D_z q$ could by induction replace $q$ as the polynomial for which $q(x,f(x)) \equiv 0$ (note that one needs to verify in this case that $D_z^2 q(x,z)$ is not identically zero; if it happened to be identically zero, then $D_z q(x,f(x))$ being zero on $U$ would force $D_z q(x,z)$ to be identically zero \textit{everywhere}, which is known not to be the case).  Then the algebraic lifting map $\Phi(x) := f(x)$ (with $M = 1$) has associated annihilating map $p(x,y) := q(x,y)$ and $f$ lifts to the polynomial $y$ via $\Phi$ on $U$.

Conversely, given $f, \Phi$, $p$, and $F$, let $I$ be the ideal of all polynomials $g(x,y,z) \in  \C[x,y,z]$ (for $x = (x^1,\ldots,x^d)$, $y = (y^1,\ldots,y^M)$, and $z$ being one-dimensional) such that $g(x,\Phi(x),f(x)) = 0$ for all $x \in U$. The ideal $I$ is prime because if $g(x,\Phi(x),f(x)) h(x,\Phi(x),f(x)) = 0$ for all $x \in U$, then real analyticity of $\Phi$ and $f$ guarantee that either $g \in I$ or $h \in I$. The variety $V(I) \subset \C^{d+M+1}$ of points at which every $g \in I$ vanishes is consequently irreducible. Its dimension is at most $d$ because the polynomials $p^1(x,y),\ldots,p^M(x,y)$ and $-z + F(x,y)$ each belong to $I$ and there exists an $x_0 \in U$ such that at the point $(x_0,\Phi(x_0),f(x_0)) \in V(I)$, the Jacobian matrix $D_{y,z} (p^1(x,y),\ldots,p^M(x,y),-z+F(x,y))$ has determinant $-\det D_y p(x,y) \neq 0$ and consequently has rank $M+1$ (see \cite{cos}*{9.6, Definition 7 and Theorem 8} for the relationship between Jacobians and the dimension of a variety). As a consequence, there must be a nontrivial polynomial $G(x,z) \in I \cap \C [ x,z]$ \cite{cos}*{9.5, Corollary 4}. If $G \in \C [x]$, then because every $G \in I$ vanishes at all points $(x,\Phi(x),f(x))$ for $x \in U$, this would mean that $G(x) = 0$ on $U$, which then means that $G$ is simply the trivial polynomial. Thus $G(x,z)$ must vanish at all points $(x,f(x))$ for $x \in U$ and must depend nontrivially on $z$. Both the real and imaginary parts of $G(x,z)$ also vanish at all points $(x,f(x))$ for $x \in U$, and at least one must depend nontrivially on $z$, so taking $q$ to be either the real or imaginary part of $G$ gives a real polynomial with nontrivial $z$ dependence such that $q(x,f(x)) = 0$ for all $x \in U$.
\end{proof}

Given an open set $U \subset \R^d$ and a Nash function $f$ not identically zero, its complexity, denoted $c(f)$, will be defined to be the minimum of the product $\deg F \deg p^1 \cdots \deg p^M$ over all polynomials $F$ and algebraic liftings $\Phi : U \rightarrow \R^M$ with associated annihilating map $p(x,y)$ such that $f(x) = F(x,\Phi(x))$ and $p(x,\Phi(x)) = 0$ for all $x \in U$. (For definiteness, let the complexity of the zero function simply equal zero.) It is easy to see that $c(f) \leq \deg f$ when $f$ is a nontrivial polynomial: one can use the trivial lifting $\Phi(x) = x$ with associated annihilating map $p(x,y) = -x + y$ for $M = d$ to lift $f(x)$ to $f(y)$, for example. Using the lifting constructed in the proof of Proposition \ref{itsnash}, one can also see that this definition of complexity is never larger than that of Ramanakoraisina \cite{ramanakoraisina89}, who proved a B\'{e}zout-type theorem for Nash functions. The following result shows that B\'{e}zout's Theorem also holds when using the current notion of complexity. 
\begin{theorem}[B\'{e}zout's Theorem for Nash functions]
Suppose that $f_1,\ldots,f_d$ are Nash functions on some open set $U \subset \R^d$. Then for any real $a_1,\ldots,a_d$, the number of nondegenerate solutions $x \in U$ of the system $f_1(x) = a_1,\ldots,f_d(x) = a_d$ is no greater than $c(f_1) \cdots c(f_d)$. \label{NashBezout}
\end{theorem}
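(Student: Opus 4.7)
The plan is to lift the system $f_i(x) = a_i$ on $U$ to a polynomial system on a higher-dimensional complex affine space, apply the affine form of B\'{e}zout's theorem there, and transfer the bound back to $U$. For each $i$ I would fix an algebraic lifting $\Phi_i : U \to \R^{M_i}$ with associated annihilating map $p_i : \R^d \times \R^{M_i} \to \R^{M_i}$ and polynomial $F_i$ achieving $c(f_i) = \deg F_i \cdot \prod_{j=1}^{M_i} \deg p_i^j$. Setting $M := M_1 + \cdots + M_d$, $\Phi(x) := (\Phi_1(x), \ldots, \Phi_d(x))$, and writing $y = (y_1, \ldots, y_d) \in \R^M$ with $y_i \in \R^{M_i}$, the combined polynomial system on $\C^{d+M}$ takes the form
\[ p_i^j(x, y_i) = 0 \ \ (1 \leq i \leq d,\ 1 \leq j \leq M_i) \quad \text{and} \quad F_i(x, y_i) = a_i \ \ (1 \leq i \leq d). \]
This is $d + M$ polynomial equations in $d + M$ unknowns with product of degrees equal to $\prod_i c(f_i)$, so by the affine form of B\'{e}zout's theorem (which follows from the usual projective version because each isolated affine zero contributes multiplicity at least one to the projective intersection number) the number of isolated complex solutions is at most $c(f_1) \cdots c(f_d)$.

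The crux is to verify that each nondegenerate real solution $x_*$ of $f = a$ in $U$ lifts to an isolated complex solution $(x_*, \Phi(x_*))$ of the polynomial system. To this end, let $S_i := \{x \in U : \det D_{y_i} p_i(x, \Phi_i(x)) = 0\}$. By the algebraic lifting condition this is the zero set of a real analytic function not identically zero on any open subset of $U$, and so $S_i$ has Lebesgue measure zero. Assuming momentarily that $x_* \notin \bigcup_i S_i$, I would argue as follows: the complex implicit function theorem applied to $p_i = 0$ in a neighborhood of $(x_*, \Phi_i(x_*))$ shows that the complex variety $\{p_i = 0\}$ is locally a single smooth sheet, given as the graph of the complex analytic continuation $\tilde \Phi_i$ of $\Phi_i$. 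Any complex analytic curve $s \mapsto (x(s), y(s))$ in the lifted variety passing through $(x_*, \Phi(x_*))$ must therefore satisfy $y_i(s) = \tilde \Phi_i(x(s))$ for each $i$, which forces $\tilde f_i(x(s)) = F_i(x(s), \tilde \Phi_i(x(s))) = a_i$ for all $s$. The hypothesis that $\det D f(x_*) \neq 0$ extends to the complex Jacobian at $x_*$, so $x_*$ is an isolated zero of $\tilde f - a$ in $\C^d$, contradicting any such nontrivial curve.

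To remove the hypothesis $x_* \notin \bigcup_i S_i$, I would use a Sard-type perturbation. Since each $S_i$ has Lebesgue measure zero and the real analytic map $f$ is locally Lipschitz, the image $f(\bigcup_i S_i) \subset \R^d$ has Lebesgue measure zero as well. Hence for almost every small $\epsilon \in \R^d$, no solution of $f(x) = a + \epsilon$ lies in $\bigcup_i S_i$. Meanwhile, the implicit function theorem guarantees that each nondegenerate solution of $f = a$ deforms to a unique nearby nondegenerate solution of $f = a + \epsilon$ for $\epsilon$ sufficiently small; choosing $\epsilon$ generically small, the count of nondegenerate solutions of $f = a$ is bounded above by the count of isolated complex solutions of the lifted system with right-hand side $a + \epsilon$, and the preceding two paragraphs then give the bound $c(f_1) \cdots c(f_d)$. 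The principal obstacle in this plan is precisely that $D_{y_i} p_i$ can be singular at some point $(x_*, \Phi_i(x_*))$ of genuine interest; overcoming this is the sole purpose of the perturbation step.
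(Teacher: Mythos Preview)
Your proposal is correct and follows essentially the same approach as the paper: lift to a polynomial system on $\C^{d+M}$, handle the possible degeneracy of $D_{y_i} p_i$ at the lifted points by perturbing the right-hand side $a$, and then apply B\'{e}zout. The only notable differences are technical. First, where you argue isolation of the lifted point by locally writing $\{p_i=0\}$ as a graph via the complex implicit function theorem and then invoking nonsingularity of $D_x f$, the paper instead proves the explicit factorization $\det D_{x,y}\overline{F}|_{(x,\Phi(x))} = \det D_x f(x)\prod_j \det D_{y_j} p_j|_{(x,\Phi_j(x))}$ (its Proposition~\ref{jacprop}), which directly gives nondegeneracy of the lifted solution. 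Second, your perturbation step uses the measure-theoretic fact that the Lipschitz image $f(\bigcup_i S_i)$ is null, whereas the paper tracks each of the $N$ solutions via local inverses $\varphi_i$ and observes that the product $\prod_i \prod_j \det D_{y_j} p_j|_{(\varphi_i(a'),\Phi_j(\varphi_i(a')))}$ is a nonvanishing real analytic function of $a'$, hence nonzero for some nearby $a'$. Both variants achieve the same end.
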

The proof of Theorem \ref{NashBezout} is not particularly elaborate and is given below. The key computation in this regard is the following proposition.
\begin{proposition}
Suppose that $f(x) := (f_1(x),\ldots,f_d(x))$ is real analytic on $U$ and that each $f_i$ is a Nash function on $U$. Let $$\overline{F}(x,y_1,\ldots,y_d) := (F_1(x,y_1),\ldots,F_d(x,y_d),p_1(x,y_1),\ldots, p_d(x,y_d)),$$ where for each $j=1,\ldots,d$, $f_j(x) = F_j(x,\Phi_j(x))$ for the algebraic lifting $\Phi_j$ with associated annihilating map $p_j$. Then
\begin{equation} \begin{split} \left. \det D_{x,y_1,\ldots,y_d} \overline{F} \right|_{(x,\Phi_1(x),\ldots,\Phi_d(x))} 
= \det D_x f(x)  \prod_{j=1}^d \left. \det D_{y_j} p_j \right|_{(x,\Phi_j(x))} \end{split}
\label{jacobian} \end{equation}
for all $x \in U$. \label{jacprop}
\end{proposition}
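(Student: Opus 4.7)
The plan is to compute the Jacobian determinant by applying column operations that exploit the two defining identities $f_j(x) = F_j(x,\Phi_j(x))$ and $p_j(x,\Phi_j(x)) \equiv 0$. Differentiating these in $x$ at the point $(x,\Phi_j(x))$ yields the two chain-rule relations
\[ D_x f_j = D_x F_j + D_{y_j} F_j \cdot D_x \Phi_j \quad \text{and} \quad 0 = D_x p_j + D_{y_j} p_j \cdot D_x \Phi_j, \]
each of which identifies a particular linear combination of $x$- and $y_j$-columns in the Jacobian of $\overline{F}$.

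Evaluating $D_{x,y_1,\ldots,y_d} \overline{F}$ at $(x,\Phi_1(x),\ldots,\Phi_d(x))$, one should arrange the rows in the order $F_1,\ldots,F_d,p_1,\ldots,p_d$ and the columns in the order $x, y_1,\ldots,y_d$. Because $F_i$ depends only on $x$ and $y_i$ (and likewise $p_i$ depends only on $x$ and $y_i$), the block $(\partial F_i / \partial y_j)$ vanishes for $j \neq i$ and similarly for $p_i$. The key step is then the column operation: replace the $x$-column block by itself plus $\sum_{j} D_x \Phi_j$ times the $y_j$-column block. Since this amounts to adding multiples of some columns to others, it leaves the determinant unchanged.

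By the two chain-rule identities above, after this operation the rows labeled by $F_i$ have $x$-block equal to $D_x f_i$, and the rows labeled by $p_i$ have $x$-block equal to $0$. The transformed matrix is thus block upper-triangular: the top-left $d \times d$ block is $D_x f$, the bottom-left block is zero, and the bottom-right block is block diagonal with diagonal blocks $D_{y_j} p_j$. Expanding the determinant as the product of the diagonal blocks yields
\[ \det D_x f(x) \prod_{j=1}^d \det D_{y_j} p_j \big|_{(x,\Phi_j(x))}, \]
which is precisely \eqref{jacobian}.

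The main obstacle, such as it is, is purely bookkeeping: one has to verify carefully that the off-diagonal interaction between the $F_i$-rows and the $y_j$-columns for $j \neq i$ really vanishes (so that the column operation interacts with only the diagonal blocks), and that the same is true for $p_i$. Once this block-diagonal structure of the partials in $y$ is in hand, the rest of the argument is a mechanical application of the chain rule; no additional input from the Nash or algebraic structure of $\Phi_j$ or $p_j$ is needed, since the identity \eqref{jacobian} is a purely differential-geometric consequence of $p_j(x,\Phi_j(x)) \equiv 0$.
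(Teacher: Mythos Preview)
Your proposal is correct and follows essentially the same approach as the paper: both perform the column operation of adding $\sum_j D_x\Phi_j$ times the $y_j$-columns to the $x$-columns, invoke the chain rule so that the $p$-rows acquire zeros in the $x$-block and the $F$-rows acquire $D_x f$, and then read off the determinant from the resulting block upper-triangular structure together with the block-diagonal form of the $(p_i,y_j)$ sub-block. The paper phrases the final step as a Laplace expansion along the first $d$ rows rather than as a block-triangular determinant, but this is purely cosmetic.
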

\begin{proof}
To compute the Jacobian determinant $\det D_{x,y_1,\ldots,y_d} \overline{F}$, let $\overline{F}$ be regarded as a column vector and let the derivatives with respect to $x^1,\ldots,x^d$ and $y_1^1,\ldots,y_1^{M_1}$ through $y_d^{1},\ldots,y_d^{M_d}$ correspond to columns of the Jacobian.  Without loss of generality, each column $i \in \{1,\ldots,d\}$ may be replaced by 
\[ \left[ \frac{\partial}{\partial x^i} + \sum_{j=1}^{M_1} \frac{\partial (\Phi_1)^j}{\partial x^i} (x) \frac{\partial}{\partial y_1^j} + \cdots + \sum_{j=1}^{M_d} \frac{\partial (\Phi_d)^j}{\partial x^i} (x) \frac{\partial}{\partial y_d^j} \right] \overline{F}(x,y_1,\ldots,y_d) \]
because doing so is equivalent to applying a series of elementary column operations to the Jacobian matrix which add multiples of later columns to column $i$ (so none of these operations change the determinant). When the determinant is then evaluated at $(y_1,\ldots,y_d) = (\Phi_1(x),\ldots,\Phi_d(x))$, the chain rule guarantees that the first $d$ columns of the resulting Jacobian matrix must have the form
\[ \begin{bmatrix} \frac{\partial f_1}{\partial x^1}(x) & \cdots & \frac{\partial f_1}{\partial x^d}(x) \\
\vdots & \ddots & \vdots \\
\frac{\partial f_d}{\partial x^1}(x) & \cdots & \frac{\partial f_d}{\partial x^d}(x) \\ 0 & \cdots & 0 \\
\vdots & \cdots & \vdots \\
0 & \cdots & 0
\end{bmatrix}. \]
Expanding the determinant of the full Jacobian matrix in these first $d$ rows and using the fact that $p_i$ does not depend on $y_j$ when $i \neq j$ gives exactly \eqref{jacobian}.\end{proof}

\begin{proof}[Proof of Theorem \ref{NashBezout}.]
Suppose there exists some $a = (a_1,\ldots,a_d) \in \R^d$ and distinct $x_1,\ldots,x_N \in U$ with $N > c(f_1) \cdots c(f_d)$ which are nondegenerate solutions of the system $f_1(x_i) = a_1,\ldots,f_d(x_i) = a_d$ for all $i \in \{1,\ldots,N\}$; for convenience, this will be abbreviated $f(x_i) = a$. Every such $x_i$ must also have the property that $\overline{x}_i := (x_i,\Phi_1(x_i),\ldots,\Phi_d(x_i))$ is a solution of the system $\overline{F}(\overline{x}_i) = (a,0,\ldots,0) \in \R^{d} \times \R^{M_1 + \cdots + M_d}$. Moreover, by \eqref{jacobian}, the Jacobian determinant of $\overline{F}$ at $\overline{x}_i$ will equal exactly
\[ \det D_x f(x_i)  \prod_{j=1}^d \det D_{y_j} p_j |_{(x_i,\Phi_j(x_i))}. \]
Because the Jacobian determinant $\det D_x f$ is nonzero at each $x_i$ by assumption, the Inverse Function Theorem implies the existence of real analytic maps $\varphi_i$ defined on a neighborhood of $a \in \R^d$ such that $\varphi_i(a) = x_i$ and $f(\varphi_i(a')) = a'$ for all $a'$ sufficiently near $a$. By further restricting the domain of the $\varphi_i$, one may further assume that $\det D_x f |_{\varphi_i(a')}$ is nonzero for all such $a'$ as well and that $\varphi_i(a') \neq \varphi_j(a')$ when $i \neq j$. For each $i = 1,\ldots,N$, the quantity
\[ B_i(a') := \prod_{j=1}^d \det D_{y_j} p_j |_{ ( \varphi_i(a'), \Phi_j(\varphi_i(a')))} \]
is a real analytic function of $a'$ which is not identically zero. The product $B_1(a') \cdots B_N(a')$ is thus also a real analytic function of $a'$ defined on a neighborhood of $a$ and not identically zero. It is therefore possible to find an $a'$ belonging to any given neighborhood of $a$ such that
\[ \det D_x f(\varphi_i(a'))  \prod_{j=1}^d \det D_{y_j} p_j |_{(\varphi_i(a'),\Phi_j(\varphi_i(a')))} \neq 0 \]
for each $i \in \{1,\ldots,N\}$. It follows that $\overline{x}'_{i} := (\varphi_i(a'),\Phi_1(\varphi_i(a')),\ldots,\Phi_d(\varphi_i(a')))$ will be a nondegenerate solution of $\overline{F}(\overline{x}'_{i}) = (a',0,\ldots,0)$ for each $i$. Thus the system $\overline{F}(\overline{x}) = (a',0,\ldots,0)$ must consequently have at least $N$ distinct nondegenerate solutions $\overline{x}'_i$. Now B\'{e}zout's Theorem \cite{fulton1984}*{Chapter 8, Section 4} guarantees that the number of complex isolated solutions of the system $\overline{F}(\overline{x}) = (a',0,\ldots,0)$ is at most the product of degrees of the polynomials making up the system. Real nondegenerate solutions remain nondegenerate and therefore isolated when regarded as belonging to the complex solution set, so it follows that $N \leq \prod_{j=1}^d \deg F_j \deg p_j^1 \cdots \deg p_j^{M_j}$. Choosing each $F_j$ and the associated $\Phi_j$ and $p_j$ so that the product of degrees is as small as possible gives exactly that $N \leq c(f_1) \cdots c(f_d)$.
\end{proof}

Nash functions enjoy the nice property that they are closed under a rather long list of natural operations. For example, if $f$ and $g$ are Nash on some open set $U$, then both $fg$ and $f+g$ are Nash and in both cases, the complexities of $fg$ and $f+g$ are both bounded by $c(f) c(g)$. (The proof is elementary in both cases when one recognizes that any algebraic lifting maps $\Phi_i : U \rightarrow \R^{M_i}$ for $i=1,\ldots,2$ induce an algebraic lifting map $\overline{\Phi} : U \rightarrow \R^{M_1+M_2}$ by $\overline{\Phi}(x) = (\Phi_1(x),\Phi_2(x))$ with an associated annihilating map $\overline{p}$ formed by simply concatenating the maps $p_1$ and $p_2$ after interpreting each as constant in the variables of $\R^{M_1} \times \R^{M_2}$ it does not explicitly depend on). Other less obvious but true facts are that $1/f$ and $\sqrt{f}$ are Nash on the set where $f > 0$ and have complexity at most $2c(f)$. 
The key property of Nash functions as they pertain to Theorem \ref{mainineq} is as follows.
\begin{lemma}
If $f_1,\ldots,f_{d+1}$ are Nash functions on some open set $U \subset \R^{d}$, then the function \label{ratiolemma}
\begin{equation} \det \frac{ \partial (f_1,\ldots,f_d)}{\partial x}  \left( \det \frac{\partial (f_2,\ldots,f_{d+1})}{\partial x} \right)^{-1} \label{ratiofn} \end{equation}
is Nash on the set $U' := \set{x \in U}{\det \partial (f_2,\ldots,f_{d+1})/\partial x \neq 0}$. Its complexity is at most $(c(f_1) \cdots c(f_{d+1}))^2$.
\end{lemma}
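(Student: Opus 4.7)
The approach is to find a single algebraic lifting map for the ratio \eqref{ratiofn}, by combining the liftings of the individual $f_i$ and then adjoining one additional variable that inverts the denominator. Let $(\Phi_i,p_i,F_i)$ realize each $f_i$ at its minimal complexity, and set $\overline\Phi := (\Phi_1,\ldots,\Phi_{d+1}) : U \to \R^{M_1+\cdots+M_{d+1}}$ with the annihilating map $\overline p := (p_1,\ldots,p_{d+1})$ obtained by concatenation (each $p_i$ read as constant in the $y_j$ for $j \neq i$). The block-diagonal form of $D_y \overline p$ gives $\det D_y \overline p = \prod_i \det D_{y_i} p_i$, so $\overline\Phi$ is itself an algebraic lifting map on $U$.

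Next, I would apply Proposition \ref{jacprop} to the two $d$-tuples $(f_1,\ldots,f_d)$ and $(f_2,\ldots,f_{d+1})$ with respect to this common lifting. Each Jacobian determinant is exhibited as the restriction to $y = \overline\Phi(x)$ of a polynomial $G_1$ (respectively $G_2$) divided by the corresponding product of $\det D_{y_j} p_j$. Forming the ratio \eqref{ratiofn} and canceling the shared factors $\det D_{y_j} p_j$ for $j \in \{2,\ldots,d\}$ leaves
\[ \frac{\det \partial(f_1,\ldots,f_d)/\partial x}{\det \partial(f_2,\ldots,f_{d+1})/\partial x}(x) \; = \; \frac{G_1(x,\overline\Phi(x))\cdot \det D_{y_{d+1}} p_{d+1}|_{(x,\Phi_{d+1}(x))}}{G_2(x,\overline\Phi(x))\cdot \det D_{y_1} p_1|_{(x,\Phi_1(x))}} \; =: \; \frac{H(x,\overline\Phi(x))}{K(x,\overline\Phi(x))}, \]
where the denominator $K$ vanishes exactly on the complement of $U'$.

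To promote this rational expression to a Nash function on $U'$, I would adjoin one additional coordinate $z(x) := 1/K(x,\overline\Phi(x))$, whose annihilating polynomial is $z K(x,y) - 1$. The augmented map $(\overline\Phi, z)$ is then an algebraic lifting map on $U'$, and \eqref{ratiofn} equals the polynomial $z \cdot H(x,y)$ evaluated at the new lifted point, which establishes the Nash property.

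The main obstacle is the complexity bound. One has to estimate $\deg H$, $\deg K$, and the degrees in the annihilating system in terms of the individual $\deg F_i$ and $\deg p_i^\ell$, then verify that the resulting product telescopes to at most $(c(f_1)\cdots c(f_{d+1}))^2$. The squared factor is consistent with $K$ entering the complexity product twice: once through the new annihilating polynomial $zK-1$, and once indirectly, because $K$ is itself a product of subdeterminants of the big Jacobian $D_{x,y}\overline F$ whose degree picks up contributions from every $F_i$ and every $p_i^\ell$. Bookkeeping these degree products carefully — and choosing, for each $f_i$, a lifting attaining the minimum in the definition of $c(f_i)$ — is what yields the stated bound.
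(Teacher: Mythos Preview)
Your approach is essentially the paper's: concatenate the liftings $(\Phi_i,p_i)$, use Proposition~\ref{jacprop} to write both Jacobians as restrictions of polynomials $H,K$ in $(x,\overline\Phi(x))$ (your $H,K$ coincide with the paper's $H_1,H_2$), and adjoin one new coordinate. The difference is that you adjoin $z := 1/K$ with annihilating polynomial $zK-1$ and lift $\varphi$ to $zH$, whereas the paper adjoins $z := \varphi$ itself with annihilating polynomial $H - zK$ and lifts $\varphi$ to the degree-one polynomial $z$. This seemingly minor choice creates two gaps.

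First, your claim that ``$K$ vanishes exactly on the complement of $U'$'' is false: one has $K(x,\overline\Phi(x)) = \det\tfrac{\partial(f_2,\ldots,f_{d+1})}{\partial x}(x) \cdot \prod_{j=1}^{d+1} \det D_{y_j}p_j\big|_{(x,\Phi_j(x))}$, and the second factor is only guaranteed to be \emph{not identically} zero, so $1/K$ may be undefined on a thin subset of $U'$ and your lifting fails to cover the full domain required by the definition of complexity. Second, and more damaging for the stated bound, your complexity product contains both $\deg(zH)\approx\deg H$ and $\deg(zK-1)\approx\deg K$ in addition to $\prod_{j,i}\deg p_j^i$. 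With $\deg H,\deg K < P := \prod_j \deg F_j \prod_{j,i}\deg p_j^i$, this gives at best $P^2\prod_{j,i}\deg p_j^i = (\prod_j \deg F_j)^2(\prod_{j,i}\deg p_j^i)^3$, which exceeds $(c(f_1)\cdots c(f_{d+1}))^2$ by a factor $\prod_{j,i}\deg p_j^i$; the bookkeeping does \emph{not} telescope as you hoped. The paper's choice fixes both issues at once: $\varphi$ is defined on all of $U'$, the identity $H-\varphi K\equiv 0$ holds there regardless of whether $K$ vanishes, the lift polynomial has degree one, and only the single factor $\deg(H-zK) < P$ enters, yielding $P\cdot\prod_{j,i}\deg p_j^i \leq (c(f_1)\cdots c(f_{d+1}))^2$.
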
 
\begin{proof}
As before, for each $i=1,\ldots,d+1$, let $\Phi_i(x)$ be an algebraic lifting map with annihilating polynomial $p_i(x,y_i)$ and let $F_i(x,y_i)$ be such that $f_i(x) = F_i(x,\Phi_i(x))$ on $U$.
Let $\varphi(x)$ denote the function \eqref{ratiofn} and consider the polynomials
\begin{align*}
H_1(x,y_1,\ldots,y_{d+1}) & := \det \frac{\partial (F_1,\ldots,F_d,p_1,\ldots,p_d)}{\partial (x,y_1,\ldots,y_d)} \det D_{y_{d+1}} p_{d+1} , \\
H_2(x,y_1,\ldots,y_{d+1}) & := \det \frac{\partial (F_2,\ldots,F_{d+1},p_2,\ldots,p_{d+1})}{\partial (x,y_1,\ldots,y_d)} \det D_{y_{1}} p_{1}, 
\end{align*}
where each $p_i$ and $F_i$ is understood to depend on $x$ and $y_i$ only (i.e., to be independent of $y_j$ when $j \neq i$).
Let
\begin{align*}
\overline{\Phi}(x) & := (\Phi_1(x),\ldots,\Phi_{d+1}(x),\varphi(x)), \\ \overline{p}(x,y_1,\ldots,y_{d+1},z) & := (p_1(x,y_1),\ldots,p_{d+1}(x,y_{d+1}),\\ & \qquad H_1(x,y_1,\ldots,y_{d+1}) - z H_2(x,y_1,\ldots,y_{d+1})).
\end{align*} 
By \eqref{jacobian}, it must be the case that
\[ H_1 (x,\Phi_1(x),\ldots,\Phi_{d+1}(x)) = \det \frac{\partial (f_1,\ldots,f_d)}{\partial x}(x)  \prod_{j=1}^{d+1} \det D_{y_{j}} p_{j} |_{(x,\Phi_j(x))} \]
and
\[ H_2(x,\Phi_1(x),\ldots,\Phi_{d+1}(x)) = \det \frac{\partial (f_2,\ldots,f_{d+1})}{\partial x}(x) \prod_{j=1}^{d+1} \det D_{y_{j}} p_{j} |_{(x,\Phi_j(x))}, \]
which means that $H_1 - z H_2$ vanishes identically when evaluated at $(x,\overline{\Phi}(x))$ for $x \in U'$. Likewise, all other components of $\overline{p}$ are identically zero as a function of $(x,\overline{\Phi}(x))$.
The Jacobian determinant of $\overline{p}$ with respect to $y_1,\ldots,y_{d+1},z$ is exactly
\[ -H_2(x,y_1,\ldots,y_{d+1}) \prod_{j=1}^{d+1} \det D_{y_j} p_j|_{(x,y_j)}, \]
which is not identically zero on $U'$ when evaluated at the points $(x,\overline{\Phi}(x))$. This means that $\overline{\Phi}$ is an algebraic lifting map on $U'$ with annihilating map $\overline{p}$. Via this map, $\varphi$ lifts to $z$, so the complexity of $\varphi$ is therefore at most
\[ \left( \prod_{j=1}^{d+1} \prod_{i=1}^{M_i} \deg p_j^i \right) \cdot \deg (H_1 - z H_2). \]
Because each of $H_1$ and $H_2$ are products of derivatives of the functions $F_j$ and $p_j^i$ for $i \in \{1,\ldots,M_{j}\}$ and $j \in \{1,\ldots,d+1\}$, the degree of both $H_1$ and $H_2$ will be strictly less than $\prod_{j=1}^{d+1} \deg F_j \prod_{i=1}^{M_j} \deg p_j^i$, which means that the product of degrees of the polynomials in $\overline{p}$ never exceeds $\prod_{j=1}^{d+1} \deg F_j \prod_{i=1}^{M_j} (\deg p_j^i)^2$. Optimizing gives that the complexity of $\varphi$ cannot exceed $(c(f_1) \cdots c(f_{d+1}))^2$ as promised.
\end{proof}
The proof of Theorem \ref{mainineq} from Lemma \ref{geocorr} is now rather immediate.
\begin{proof}[Proof of Theorem \ref{mainineq}.]
By \eqref{volume}, it suffices to prove the inequality
\begin{equation} \int_{E_{N-1}} \left| \det \frac{\partial (X^{\alpha_1} f^{j_1},\ldots,X^{\alpha_d} f^{j_d})}{\partial x} \right| dx \leq 2^d D^{(2d+2)^N} \label{thecov} \end{equation}
for each $N$ when $\alpha_1,\ldots,\alpha_d$ are of generation at most $N-1$ and when $f = (f^1,\ldots,f^m)$ is a Nash mapping on some open $U \subset \R^d$ for which the complexity of $c(f^j) \leq D$ for all $j \in \{1,\ldots,m\}$. In the proof of Lemma \ref{geocorr}, the mappings $f^{(N)}$ are defined so that the components of $f^{(N)}$ are either components of $f^{(N-1)}$ or have the form $X^{(N)}_i g$ for some function $g$ being one of the components of $f^{(N-1)}$. By virtue of the definition \eqref{vfdef} of the vector fields $X^{(N)}_i$ and Lemma \ref{ratiolemma}, $X^{(N)}_i \varphi$ is a Nash function when $\varphi$ is Nash and each $X^{\alpha_i} f^{j_i}$ is Nash whenever $\alpha_i$ and $j_i$, $i=1,\ldots,d$ are as they appear in \eqref{volume}. By \eqref{ratiofn} induction on $N$,
\[ c(X^{(N)}_i X^{\beta} f^j) \leq \left( c( X^\beta f^j) \prod_{i'=1}^d c( X^{\alpha_{i'}} f^{j_{i'}}) \right)^2 \]
whenever $\beta$ is of generation at most $N-1$. By induction, each complexity on the right-hand side of this expression is at most $D^{(2d+2)^{N-1}}$, so $c(X^\alpha f^j) \leq (D^{(2d+2)^{N-1}})^{2(d+1)} = D^{(2d+2)^N}$ as desired.

Now consider the integral \eqref{thecov}. It is known that $|X^{\alpha_i} f_i| \leq 1$ at all points of $E_{N-1}$. By the change of variables formula, it follows that 
\[ \int_{E_{N-1}} \left| \det \frac{\partial (X^{\alpha_1} f^{j_1},\ldots,X^{\alpha_d} f^{j_d})}{\partial x} \right| dx \]
is bounded above by the Lebesgue measure of $[-1,1]^d$ times the maximal number of nondegenerate solutions $x \in U_{N-1}$ of the system $X^{\alpha_i}f^{j_i}(x) = c_i$, i.e.,
\[ \begin{split} \int_{E_{N-1}} & \left| \det \frac{\partial (X^{\alpha_1} f^{j_1},\ldots,X^{\alpha_d} f^{j_d})}{\partial x} \right| dx  \leq 2^d \sup_{(c_1,\ldots,c_d) \in \R} \# \Big\{ x \in U_{(N-1)} \ \Big| \\ & X^{\alpha_i} f^{j_1} (x) = c_i, i=1,\ldots,d  \text{ and } \left| \det \frac{\partial (X^{\alpha_1} f^{j_1},\ldots,X^{\alpha_d} f^{j_d})}{\partial x} (x) \right| \neq 0 \Big\}. \end{split} \]
By Theorem \ref{NashBezout} and the complexity bound, it follows that
\[ \int_{E_{N-1}} \left| \det \frac{\partial (X^{\alpha_1} f^{j_1},\ldots,X^{\alpha_d} f^{j_d})}{\partial x} \right| dx \leq 2^d \left( D^{(2d+2)^{N-1}} \right)^d. \]
So in particular, \eqref{volume} implies that
\[ \wt(x) \left| \det (X_1^{(N)},\ldots,X_d^{(N)}) \right| \Big|_x \geq c_{N,d} 4^{-d} D^{-(2d+2)^N} m^{-Nd} \wt(E_0) \]
for all $x \in E_N$, which gives \eqref{nondegenvecs}. 
\end{proof}
It is worth noting that with additional work, the dependence of this constant in \eqref{nondegenvecs} the parameter $D$ can be substantially improved in the case when all $f^j$ happen to be polynomials (because the current proof includes unnecessary duplication of equations in the lifting when all the original functions happen to be polynomials), but this will have no meaningful application at present and so is omitted.

\section{Sufficiency of the nondegeneracy condition}
\label{suffsec}

All of the necessary tools have now been assembled to prove Theorem \ref{characterthm} for polynomial mappings\footnote{In fact, the argument will show boundedness of \eqref{theop} when the associated $\phi$ is Nash. This is nice because the Implicit Function Theorem then allows one to establish boundedness of \eqref{theop} when the map $\gamma_t(x)$ is polynomial but does not have the form $(t,\phi(x,t))$.}.  Section \ref{collectineq} derives some key inequalities relating to nondegeneracy, and Section \ref{polysec} shows how these inequalities imply $L^{p_b}$--$L^{q_b}$ boundedness. Throughout this section, it will be assumed that $\phi(x,t)$ is a polynomial function on some open set $U \subset \R^{n} \times \R^{\de}$ with $D_x \phi(x,t)$ full rank on $U$, that $\gamma_t(x) := (x,\phi(x,t))$, and that $\pi(x,y)$ is given by \eqref{definingfn}. 
\subsection{Geometric sublevel set inequalities}
\label{collectineq}

\begin{lemma}
Suppose that the mappings $\gamma$ and $\pi$ (as they appear in \eqref{theop} and Section \ref{importsec}) are polynomial. Let $U \subset \R^{\de}$ be open and let $F$ be a nonnegative Nash function on $U$ and let $\wt$ be a nonnegative measurable function on $U$.
Let $E \subset U$ be any compact set of positive Lebesgue measure, let $\{\omega_i\}_{i=1}^n$ be a basis of $\R^n$ satisfying $|\det \{\omega_i\}_{i=1}^n| = 1$. If \label{ineqlemma}
\begin{equation} \sup_{t \in E} (F(t))^2 ||d_x \pi |_{(x, \gamma_t(x))} ||_\omega^2 \leq 1 \label{supassump} \end{equation}
(recall \eqref{typicalsum}) for some fixed $x \in \R^n$, then there exists a point $t_E \in E$ and bases $\{u_i\}_{i=1}^{\de}$ of $\R^{\de}$ and $\{\omega'_i\}_{i=1}^n$ of $\R^n$ such that $\wt(t_E) |\det \{u_i\}_{i=1}^{\de}| \gtrsim \wt(E)$, $|\det \{\omega'_i\}_{i=1}^n| = 1$, $\omega'_{k+1},\ldots,\omega'_n$ span $\ker D_x \pi_{(x,\gamma_{t_E}(x))}$, and
\begin{equation} \sum_{s=0}^{\min \{d,k\}} \sum_{i=1}^{\de} \sum_{i'=1}^k \sum_{i''=k+1}^n \left| F(t_E) (\tup{u_i}{s} \cdot \nabla_t^s) d_x \pi |_{(x,\gamma_{t}(x))} (\tup{\omega'_{i'}}{k-s},\tup{\omega'_{i''}}{s}) \right|^2 \lesssim 1 \label{basicinequality} \end{equation}
at $t = t_E$.
The implicit constants above do not depend on $E$, $\wt$, $x$, or $\{\omega_i\}_{i=1}^n$ and grow at most like a finite power of the degrees of $\pi$ and $\gamma$ and the complexity of $F$. (Note that $\tup{u_i}{0} \cdot \nabla_t^0$ is to be understood as the identity operator with no corresponding sums over the variable $i$.)
\end{lemma}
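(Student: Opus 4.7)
The plan is to apply Theorem \ref{mainineq} to a suitably constructed Nash map on a neighborhood of $E$ and then translate its conclusions into the geometric bound \eqref{basicinequality} via basis manipulations. After a translation and a dilation that place $E$ in a unit box (absorbing the scale factor into the implicit constants), I would form a Nash map $f : U \rightarrow \R^m$ whose components include the products $F(t) \cdot d_x \pi|_{(x,\gamma_t(x))}(\omega_{i_1},\ldots,\omega_{i_k})$ ranging over all $k$-tuples of basis indices, the function $F$ itself, and the coordinate functions $t^j$ for $j = 1,\ldots,\de$. The coordinate-function components guarantee that $D_t f$ has full rank $\de$ everywhere; the hypothesis \eqref{supassump} bounds each of the other components by $1$ on $E$ (up to a universal constant); and by Lemma \ref{ratiolemma} together with the closure properties of Nash functions, the complexity of each component is bounded by a fixed power of $\deg \pi$, $\deg \gamma$, and $c(F)$. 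Invoking Theorem \ref{mainineq} with $N := \min\{d,k\}$ would then yield a compact set $E_N \subset E$ with $\wt(E_N) \gtrsim \wt(E)$, smooth vector fields $\{X_i^{(N)}\}_{i=1}^{\de}$ on an open set $U_N$, the determinant lower bound \eqref{nondegenvecs}, and uniform estimates $|X^\alpha f^j| \leq 1$ for every generalized multiindex $\alpha$ of generation at most $N$.

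Next I would pick any $t_E \in E_N$ and set $u_i := X_i^{(N)}|_{t_E}$; the bound \eqref{nondegenvecs} immediately gives $\wt(t_E) |\det \{u_i\}_{i=1}^{\de}| \gtrsim \wt(E)$. The kernel $V := \ker D_x \pi|_{(x,\gamma_{t_E}(x))}$ is a fixed $d$-dimensional subspace of $\R^n$, so I would apply Proposition \ref{sbfprop} to the basis $\{\omega_i\}_{i=1}^n$ with the nested subspaces $\R^n \supset V$, producing a new basis $\{\omega'_i\}_{i=1}^n$ whose final $d$ vectors span $V$. The identity \eqref{sbf} is equivalent to the matrix equation $\sum_i \omega_i \omega_i^T = \sum_i \omega'_i (\omega'_i)^T$, so taking determinants yields $|\det\{\omega'_i\}| = |\det\{\omega_i\}| = 1$; moreover, Proposition \ref{linprop} guarantees that the sum of squares $\sum |L(\omega_{i_1},\ldots,\omega_{i_k})|^2$ is preserved under this change of basis for any $k$-linear functional $L$. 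Since $d_x \pi|_{t_E}$ vanishes whenever any one of its arguments lies in $V$, a derivative of order $s$ in $t$ can activate at most $s$ of the kernel arguments $\omega'_{k+1},\ldots,\omega'_n$, so the only terms that contribute to the $s$-th summand of \eqref{basicinequality} are exactly those indexed by $(k-s)$-tuples from $\{1,\ldots,k\}$ together with $s$-tuples from $\{k+1,\ldots,n\}$, matching the combinatorial structure in the statement of the lemma.

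The main obstacle I expect is the passage from the vector-field bounds $|X^\alpha f^j| \leq 1$ supplied by Theorem \ref{mainineq} to bounds on the constant-coefficient directional derivatives $(\tup{u_i}{s} \cdot \nabla_t^s) \psi|_{t_E}$ that appear in \eqref{basicinequality}. At first order the two coincide because $X_i^{(N)}|_{t_E} = u_i$, but at higher orders one incurs correction terms from the product rule applied to the variable coefficients of the vector fields, and Theorem \ref{mainineq} only controls compositions $X^\alpha$ whose generations are strictly increasing. My plan is an induction on $s \leq N$: write $(u_{i_1} \cdot \nabla_t) \cdots (u_{i_s} \cdot \nabla_t) \psi|_{t_E}$ as a composition of vector fields of distinct generations $N_1 < \cdots < N_s = N$ (using the change-of-basis relation \eqref{basissize} to move between generations with bounded coefficients) plus lower-order correction terms; the corrections involve derivatives of the vector-field coefficients themselves, which are controlled through the coordinate-function components $t^j$ of $f$, and the lower-order constant-coefficient derivatives are controlled inductively. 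A parallel Leibniz decomposition would separate the factor $F(t_E)$ sitting outside the derivative in \eqref{basicinequality} from the derivative of $d_x\pi$, using the bounds on $X^\alpha F$ supplied by including $F$ as one of the $f^j$. Assembling these estimates produces \eqref{basicinequality} with implicit constants depending only on $n,\de,k,d$ and a fixed power of $\deg \pi \cdot \deg \gamma \cdot c(F)$, as required.
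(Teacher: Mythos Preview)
Your overall architecture matches the paper's: apply Theorem \ref{mainineq} to the Nash map with components $F(t)\,d_x\pi|_{(x,\gamma_t(x))}(\omega_{i_1},\ldots,\omega_{i_k})$ together with (scaled) coordinate functions, pick $t_E\in E_N$, set $u_i := X_i^{(N)}|_{t_E}$, and replace $\{\omega_i\}$ by $\{\omega'_i\}$ via Proposition \ref{sbfprop}. The determinant and weight conclusions follow exactly as you say.

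Where your proposal diverges from the paper is in the step you flag as the ``main obstacle,'' and there is a genuine gap there. You already observe that $d_x\pi|_{(x,\gamma_t(x))}(\tup{\omega'_{i'}}{k-s},\tup{\omega'_{i''}}{s})$ vanishes at $t=t_E$ whenever an argument comes from the kernel; the point you are missing is that this function vanishes \emph{to order $s$} at $t_E$ (each of the $s$ columns $D_x\pi\,\omega'_{i''}$ vanishes there). Consequently, for any order-$s$ differential operator, only its principal part contributes at $t_E$. This kills the problem you are worried about in two ways: first, $T^\beta$ with $|\beta|=s$ may be replaced by the constant-coefficient operator $(u^{(N_s)}_{j_s}\cdot\nabla_t)\cdots(u^{(N_1)}_{j_1}\cdot\nabla_t)$ at $t_E$ with no correction terms, after which \eqref{basissize0} lets you rewrite every $(\tup{u_i}{s}\cdot\nabla_t^s)$ as a bounded combination of such operators with strictly increasing generations; second, $F(t_E)$ passes outside the derivative for free, because every Leibniz term in which a derivative lands on $F$ leaves fewer than $s$ derivatives on the vanishing factor and hence evaluates to zero.

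Your alternative plan---include $F$ itself among the $f^j$ so as to bound $X^\alpha F$---does not work as stated: Theorem \ref{mainineq} requires $\sup_{E_0}|f^j|\le 1$, and nothing in the hypotheses bounds $F$ on $E$ (only the product $F\cdot\|d_x\pi\|_\omega$ is controlled). So the Leibniz route for $F$ is both unnecessary and unavailable. Once you use the order-$s$ vanishing as above, no induction on $s$ and no separate control of derivatives of $F$ or of the vector-field coefficients is needed.
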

\begin{proof} 
Fix $N := \min \{d,k\}$. By \eqref{supassump} and the definition of $||d_x \pi||_{\omega}$ (combined with the fact that $d_x \pi$ is an alternating $k$-linear functional), it must be that
\begin{equation} \sup_{t \in E} F(t) \left|  d_x \pi |_{(x,\gamma_t(x))} (\omega_{i_1},\ldots,\omega_{i_k}) \right| \leq 1 \label{ftype1} \end{equation}
for any indices $1 \leq i_1 < \cdots < i_k \leq n$. Moreover, compactness of $E$ implies that there exists some nonzero $c$ such that
\begin{equation} \sup_{t \in E} |c t^i| \leq 1 \label{ftype2} \end{equation}
where, as usual, $t^1,\ldots,t^{\de}$ are the standard coordinate functions on $\R^{\de}$. Taking the functions appearing on the left-hand sides of \eqref{ftype1} and \eqref{ftype2} as components of a vector-valued mapping $f : \R^{\de} \rightarrow \R^{m}$ for suitable $m \leq n^k + \de$ gives by Theorem \ref{mainineq} that there exists an open set $U_N \subset \R^{\de}$ and families of vector fields $\{T_i^{(j)}\}_{i=1}^{\de }$ for $j \in \{1,\ldots,N\}$ such that
\begin{equation} \sup_{t \in U_N}  |  T^\beta F(t) d_x \pi |_{(x,\gamma_t(x))} (\omega_{i_1},\ldots,\omega_{i_k})| \leq 1 \label{derivestbegin} \end{equation}
for any $\beta$ of generation at most $N$ and any $1 \leq i_1 < \cdots < i_k \leq n$. Theorem \ref{mainineq} additionally guarantees the existence of a compact set $E_N \subset U_N \cap E$ such that $\wt(E_N) \gtrsim \wt(E)$ and $\wt(t) |\det \{T_i^{(j)}\}_{i=1}^{d_1}| \gtrsim |E|$ at all points $t \in E_N$, where in both cases the implicit constants depend only on dimensions, degrees, and $N$; as functions of the degrees of $\pi$ and $\gamma$ and the complexity of $F$, these implicit constants grow at most like a fixed power of those quantities. Squaring \eqref{derivestbegin} and summing gives that
\begin{equation} \sup_{t \in U_N} \mathop{\sum_{|\beta| \leq N}}_{\gen(\beta) \leq N} \sum_{i =1}^n | T^\beta F(t) d_x \pi |_{(x,\gamma_t(x))} (\tup{\omega_i}{k})|^2 \lesssim 1 \label{derivestapply0} \end{equation}
with implicit constant depending only on $d$, $\de $, $k$, and $N$.
For every $N' < N$ and every $i = 1,\ldots,\de $, there are coefficients $c_{i}^{i'}$ on $U_N$ with magnitude at most $2$ at each point of $U_N$ such that
\begin{equation} T^{(N)}_i = \sum_{i'=1}^{\de } c_{i}^{i'} T^{(N')}_{i'}. \label{compatible} \end{equation}
Let $t_E$ be any point in $E_N$ (which is nonempty because it has positive measure). 

Next, consider the application of Proposition \ref{sbfprop} to the sequence of subspaces $V_1 := \R^n \supset V_2 := \ker D_x \pi |_{(x,y)}$ and basis $\{\omega_i\}_{i=1}^n$. If $\{\omega'_i\}_{i=1}^n$ is the promised basis of $\R^n$ such that the span of $\omega'_{k+1},\ldots,\omega'_n$ is exactly $V_2$, then \eqref{sbf} and Proposition \ref{linprop} imply that 
\begin{equation}  \sum_{i=1}^n | T^\beta F(t) d_x \pi |_{(x,\gamma_t(x))} (\tup{\omega_i}{k})|^2 = \sum_{i=1}^n \left|T^\beta F(t) d_x \pi |_{(x,\gamma_t(x))} (\tup{\omega_i'}{k}) \right|^2 \label{fullsum} \end{equation}
when $t = t_E$ for each $|\beta| \leq N$ and further imply that $\omega'_i = \sum_j O_{ij} \omega_j$ for some orthogonal $O$. Since $|\det \{\omega_i\}_{i=1}^n| = 1$, this forces $|\det \{\omega'_i\}_{i=1}^n| = 1$ as well.

For any $\beta$ on the left-hand side of \eqref{fullsum}, suppose that the order of $\beta$ equals $s \in \{0,\ldots,\min\{d,k\}\}$. The right-hand side is not made larger if one restricts the expansion $\tup{\omega'_i}{k}$ in such a way that the indices $i_1,\ldots,i_{k-s}$ are constrained to belong to $\{1,\ldots,k\}$ and the indices $i_{k-s+1},\ldots,i_k$ are constrained to belong to $\{k+1,\ldots,n\}$. In other words,
\begin{equation} \begin{split} \sum_{i=1}^n & \left|T^\beta F(t) d_x \pi |_{(x,\gamma_t(x))} (\tup{\omega_i}{k}) \right|^2 \\ & \geq \sum_{i'=1}^k \sum_{i''=k+1}^n \left|T^\beta F(t) d_x \pi |_{(x,\gamma_t(x))} (\tup{\omega_{i'}'}{k-s},\tup{\omega'_{i''}}{s}) \right|^2 \end{split} \label{sumbound} \end{equation}
because the terms in the second sum are merely a subset of the terms on the right-hand side of \eqref{fullsum}.
 By definition of $d_x \pi$, 
\[ \begin{split} T^\beta F(t) d_x \pi &  |_{(x,\gamma_t(x))} (\omega'_{i_1},\ldots,\omega'_{i_k})  \\ & = T^\beta F(t) \det ( D_x \pi |_{(x,\gamma_t(x))} \omega'_{i_1} ,\ldots D_x \pi |_{(x,\gamma_t(x))} \omega'_{i_k}). \end{split} \]
Because each $\omega'_{k+1},\ldots,\omega'_n$ belongs to the kernel of $D_x \pi |_{(x,\gamma_{t_E}(x))}$, the vector $D_x \pi |_{(x,\gamma_{t}(x))} \omega'_{i}$ vanishes at $t=t_E$ when $i \geq k+1$. Thus each term in the expansion of $d_x \pi |_{(x,\gamma_t(x))} (\tup{\omega_{i'}'}{k-s},\tup{\omega'_{i''}}{s})$ vanishes to at least order $s$ at $t=t_E$. This means that when the right-hand side of \eqref{sumbound} is evaluated at this particular $t_E$, the differential operator $T^\beta$ may be replaced (without changing the value of the sum) by any order $s$ differential operator whose highest-order part agrees with $T^{\beta}$. In particular, if $\beta = ((N_1 ,\ldots,N_s),(i_1,\ldots,i_s))$ and if $u^{(j)}_i \in \R^{\de}$ is a constant vector equaling $T_i^{(j)}$ at this distinguished $t=t_E$, then defining $(u \cdot \nabla_t)^\beta = (u^{(N_s)}_{j_s} \cdot \nabla_t) \cdots (u^{(N_1 )}_{j_1} \cdot \nabla_t)$ and replacing $T^\beta$ by $(u \cdot \nabla_t)^{\beta}$ on the right-hand side of \eqref{sumbound} leaves the value unchanged.
By \eqref{compatible}, if one fixes $u_1,\ldots,u_{\de }$ to simply equal $u^{(N)}_1,\ldots,u^{(N)}_{\de }$, then 
\[ (u_{i_1} \cdot \nabla_t) \cdots (u_{i_s} \cdot \nabla_t) \]
is expressible as a linear combination of terms $T^{\beta}$ for generalized multiindices $\beta$ with $|\beta| = s$. The number of such terms needed is at most $\de ^s$, and the size of coefficient for each term is at most $2^{s}$.
Thus
\begin{align*}
  \sum_{i'=1}^k \sum_{i''=k+1}^n & \left|  (u_{i_1} \cdot \nabla_t) \cdots (u_{i_s} \cdot \nabla_t) F(t) d_x \pi |_{(x,\gamma_t(x))} (\tup{\omega_{i'}'}{k-s},\tup{\omega'_{i''}}{s})  \right|^2 
  \\
 & \leq \sum_{i'=1}^k \sum_{i''=k+1}^n \left| \mathop{\sum_{|\beta| = s}}_{\gen(\beta) = s} 2^s | T^\beta d_x F(t) \pi |_{(x,\gamma_t(x))} (\tup{\omega_{i'}'}{k-s},\tup{\omega'_{i''}}{s}) | \right|^2
\end{align*}
at $t = t_E$ and consequently by Cauchy-Schwarz, one has the inequality
\begin{align*}
(4 \de)^s & \mathop{\sum_{|\beta| = s}}_{\gen(\beta) \leq N} \sum_{i'=1}^k \sum_{i''=k+1}^n \left|T^\beta F(t) d_x \pi |_{(x,\gamma_t(x))} (\tup{\omega_{i'}'}{k-s},\tup{\omega'_{i''}}{s}) \right|^2
 \\ & \geq  \sum_{i=1}^{\de} \sum_{i'=1}^k \sum_{i''=k+1}^n \left| F(t) (\tup{u_i}{s} \cdot \nabla_t^s) d_x \pi |_{(x,\gamma_t(x))} (\tup{\omega_{i'}'}{k-s},\tup{\omega'_{i''}}{s})  \right|^2
\end{align*}
at $t = t_E$; note in particular that $F(t)$ may pass outside the derivatives $\tup{u_i}{s} \cdot \nabla_t^s$ because all terms arising from the product rule which involve derivatives of $F$ must vanish because $d_x \pi |_{(x,\gamma_t(x))} (\tup{\omega_{i'}'}{k-s},\tup{\omega'_{i''}}{s})$ vanishes to order $s$ at $t = t_E$.
Summing over $s$ and recalling \eqref{derivestapply0} and \eqref{sumbound} gives \eqref{basicinequality} with a constant that depends only on $n, k$, and $\de$. The condition $\wt(t_*) |\det \{u_i\}_{i=1}^{\de}| \gtrsim \wt(E)$ with a constant growing at most like a power of the degree of $\gamma$ or $\pi$ or complexity of $F$ is a consequence of the analogous property of the vector fields $\{T^{(N)}_i\}_{i=1}^{\de}$ at the point $t_E$, which is guaranteed by inequality \eqref{nondegenvecs} of Theorem \ref{mainineq}.
\end{proof}

\begin{proposition}
Under the same hypotheses as Lemma \ref{ineqlemma}, let $t \in \R^{d_1}$, $\{u_i\}_{i=1}^{\de}$, and $\{\omega'_i\}_{i=1}^n$ be as described there. Fix $y := \gamma_{t_E}(x)$ and let $Q$ be defined as in \eqref{qdef} for a given choice $\{z_i\}_{i=1}^d$ of orthonormal basis of $\ker D_x \pi |_{(x,y)}$.  \label{comparetoQ}
Let $\{v_i\}_{i=1}^k$ be the basis of $\R^k$ satisfying 
\begin{equation} v_i := D_x \pi|_{(x,y)} \omega'_i \qquad \forall i \in \{1,\ldots,k\}, \label{vdef1} \end{equation}
and let $\{w_i\}_{i=1}^d$ be the basis of $\R^d$ such that
\begin{equation} \omega'_{k+i} = \sum_{i'=1}^d w_{i}^{i'} z_{i'} \qquad \forall i \in \{1,\ldots,d\} \label{wdef2} \end{equation}
(i.e., the coefficients of the vectors $w_i$ are given by the change of basis matrix).
Then
\begin{equation}
\begin{split}
\sum_{s=0}^{\min \{d,k\}} & \sum_{i=1}^{\de} \sum_{i'=1}^k \sum_{i''=k+1}^n \left| (\tup{u_i}{s} \cdot \nabla_t^s) d_x \pi |_{(x,\gamma_t(x)} (\tup{\omega'_{i'}}{k-s},\tup{\omega'_{i''}}{s})  \right|^2 \\ & = |\det \{v_i\}_{i=1}|^2 ( \mathcal{Q} [ \{u_i\}_{i=1}^{\de }, \{v_i^*\}_{i=1}^k, \{w_i\}_{i=1}^d ])^2
\end{split} \label{compare}
\end{equation}
at $t = t_E$,
 where $\mathcal Q$ is derived from $Q$ as in \eqref{qsumdef} and $\{v_i^*\}_{i=1}^k$ is the dual basis of $\{v_i\}_{i=1}^k$ as in \eqref{inversionthing}.
\end{proposition}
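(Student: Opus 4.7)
The plan is to evaluate the left-hand side of \eqref{compare} explicitly at $t=t_E$ and to match it termwise against the already-established identity \eqref{inversionthing}. First I would translate the geometric expression into a polynomial one. Since $D_x\pi|_{(x,\gamma_t(x))}$ coincides with $D_x\phi(x,t)$ (as recorded in the proof of Proposition \ref{coareaflat}), one has
\[ d_x\pi|_{(x,\gamma_t(x))}(\omega'_{i'_1},\ldots,\omega'_{i'_{k-s}},\omega'_{i''_1},\ldots,\omega'_{i''_s}) = \det[D_x\phi(x,t)\omega'_{i'_1},\ldots,D_x\phi(x,t)\omega'_{i''_s}]. \]
For $i'_\ell \in \{1,\ldots,k\}$ the column $D_x\phi(x,t)\omega'_{i'_\ell}$ equals $v_{i'_\ell}$ at $t=t_E$ by definition \eqref{vdef1}. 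For $i''_\ell \in \{k+1,\ldots,n\}$ the column vanishes at $t=t_E$ since $\omega'_{i''_\ell} \in \ker D_x\pi|_{(x,y)}$; using \eqref{wdef2} and the definition \eqref{qdef} of $Q$, a direct computation yields
\[ (u\cdot\nabla_t)\bigl(D_x\phi(x,t)\omega'_{k+j}\bigr)\Big|_{t=t_E} = \Theta(u,w_j), \]
where $\Theta : \R^{\de} \times \R^d \to \R^k$ is defined via $Q(u,v,w) = v\cdot\Theta(u,w)$.

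Second, I would apply the Leibniz rule to the determinant. At $t=t_E$ the last $s$ columns vanish while the first $k-s$ equal $v_{i'_1},\ldots,v_{i'_{k-s}}$, so a nonvanishing contribution to $(\tup{u_i}{s}\cdot\nabla_t^s)$ acting on the determinant requires each of the last $s$ columns to be differentiated exactly once, in some order indexed by a permutation $\sigma\in S_s$. With $j_\ell := i''_\ell - k \in \{1,\ldots,d\}$, this produces
\[ (\tup{u_i}{s}\cdot\nabla_t^s) d_x\pi|_{(x,\gamma_t(x))}(\tup{\omega'_{i'}}{k-s},\tup{\omega'_{i''}}{s})\Big|_{t=t_E} = \sum_{\sigma\in S_s}\det[v_{i'_1},\ldots,v_{i'_{k-s}},\Theta(u_{i_{\sigma(1)}},w_{j_1}),\ldots,\Theta(u_{i_{\sigma(s)}},w_{j_s})]. \]
The right-hand side equals $(\tup{u_i}{s}\cdot\nabla_t^s)\det[v_{i'_1},\ldots,v_{i'_{k-s}},\Theta(t,w_{j_1}),\ldots,\Theta(t,w_{j_s})]$, because the latter is a homogeneous polynomial of exact degree $s$ in $t$, so its $s$-th total derivative is independent of the base point and agrees with the symmetric multilinear form $\sum_\sigma$.

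Finally, substituting this identity into the sum on the left-hand side of \eqref{compare} and relabeling the $i''$ sum over $\{k+1,\ldots,n\}$ as a sum over $j\in\{1,\ldots,d\}$ produces exactly the left-hand side of \eqref{inversionthing}. Applying that identity directly yields $|\det\{v_i\}_{i=1}^k|^2 (\mathcal{Q}[\{u_i\},\{v_i^*\},\{w_i\}])^2$, which is the right-hand side of \eqref{compare}. The only bookkeeping issue of real substance is the Leibniz step, namely confirming that the antisymmetric structure of the determinant combines with the derivatives to produce precisely the symmetrization $\sum_\sigma$ above; once this is accepted, the reduction to \eqref{inversionthing} is purely a matter of matching indices.
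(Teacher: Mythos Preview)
Your proposal is correct and follows essentially the same route as the paper's own proof: both reduce the determinant derivative via the product rule (using that the last $s$ columns vanish at $t_E$ so all derivatives must land there), introduce the map $\Theta$ with $v\cdot\Theta(u,w)=Q(u,v,w)$, recognize the result as $(\tup{u_i}{s}\cdot\nabla_t^s)\det(\tup{v_{i'}}{k-s},\tup{\Theta(t,w_{i''})}{s})$, and then invoke \eqref{inversionthing}. Your Leibniz expansion with the explicit sum over $\sigma\in S_s$ is a slightly more detailed version of what the paper summarizes in one line as ``by virtue of the product rule.''
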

\begin{proof}
As observed earlier, $D_x \pi |_{(x,\gamma_t(x))} \omega'_i$ vanishes to first order (or more) at the chosen point $t=t_E$ when $i \geq k+1$, which means that when $i'_1,\ldots,i'_{k-s} \leq k$ and $i''_1,\ldots,i''_s \geq k+1$,
\[ \begin{split}
& (u_{i_1} \cdot \nabla_t) \cdots (u_{i_s} \cdot \nabla_t) \det (D_x \pi \omega'_{i'_1},\ldots,D_x \pi \omega'_{i'_{k-s}} D_x \pi \omega'_{i''_1},\ldots,D_x \pi \omega'_{i''_s}) \\ & 
= (u_{i_1} \cdot \nabla_t) \cdots (u_{i_s} \cdot \nabla_t) \det (v_{i'_1},\ldots,v_{i'_{k-s}}, D_x \pi \omega'_{i''_1},\ldots,D_x \pi \omega'_{i''_s})
\end{split} \]
for any $\{u_i\}_{i=1}^{\de }$ at the chosen $t$.
Defining
\[ \Theta (u,w) :=  (u \cdot \nabla_t) D_x \pi |_{(x,\gamma_t(x))} \left[ \sum_{i=1}^d w^i z_i \right] \]
for any $(u,w) \in \R^{\de} \times \R^{d}$
implies that
\[ \begin{split} (u_{i_1} \cdot \nabla_t) & \cdots (u_{i_s} \cdot \nabla_t) \det (v_{i'_1},\ldots,v_{i'_{k-s}}, D_x \pi \omega'_{i''_1},\ldots,D_x \pi \omega'_{i''_s}) \\ & = (u_{i_1} \cdot \nabla_{t'})  \cdots (u_{i_s} \cdot \nabla_{t'})  \det (v_{i'_1},\ldots,v_{i'_{k-s}}, \Theta(t',w_{i''_1}),\ldots,\Theta(t',w_{i''_s})) \end{split}
\]
for any $t' \in \R^{\de}$ by virtue of the product rule (since, on both sides, the only terms which can possibly be nonzero are those terms in which each one of the final $s$ entries of the determinant is differentiated with respect to exactly one of the derivatives with respect to $t$ or $t'$). Thus
\[ \begin{split}
\sum_{s=0}^{\min \{d,k\}} & \sum_{i=1}^{\de} \sum_{i'=1}^k \sum_{i''=k+1}^n \left| (\tup{u_i}{s} \cdot \nabla_t^s) d_x \pi |_{(x,\gamma_t(x))} (\tup{\omega'_{i'}}{k-s},\tup{\omega'_{i''}}{s})  \right|^2 \\
& = \sum_{s=0}^{\min\{d,k\}}  \sum_{i=1}^{\de } \sum_{i'=1}^{k} \sum_{i''=1}^d \left| (\tup{u_i}{s} \cdot \nabla_{t}^s) \det (\tup{v_{i'}}{k-s},\tup{\Theta(t, w_{i''})}{s}) \right|^2.
\end{split}
 \]
The identity \eqref{compare} is now a direct consequence of \eqref{inversionthing} because $v \cdot \Theta(u,w) = Q(u,v,w)$ by virtue of the fact that $(u \cdot \nabla_t) D_x \pi |_{(x,\gamma_t(x))}$ is exactly equal to $(u \cdot \nabla_t) D_x \phi(x,t)$ at $t=t_E$ by virtue of the definition \eqref{definingfn}.  
\end{proof}

\subsection{Proof of sufficiency in the polynomial case}
\label{polysec}
Given a point $(x_*,t_*) \in \R^{n} \times \R^{\de}$ and a polynomial mapping $\phi(x,t)$ defined near $(x_*,t_*)$, suppose that $Q$ (given by \eqref{qdef}) is nondegenerate at $(x_*,t_*)$. Let $y_* := \gamma_{t_*}(x_*)$. Corollary \ref{stability} guarantees the existence of a finite constant $c$ depending only on $Q$ as defined at $(x_*,t_*)$ and a neighborhood of $(x_*,t_*)$ depending only on the $C^3$ norm of $\phi$ near $(x_*,t_*)$ such that when $\tilde \phi$ is any mapping sufficiently close to $\phi$ on that neighborhood (also measured in the $C^3$ norm), then \eqref{qnondegen2} holds for the same fixed value of $c$ for all $\tilde Q$ computed from this $\tilde \phi$ at any point of the neighborhood. Let $\eta(x,y)$ be any continuous cutoff function such that $\eta(x,\gamma_t(x))$ is compactly supported within this given neighborhood.

To prove that the operator \eqref{theop}, defined using this $\phi$ and $\eta$, maps $L^{p_b}$ to $L^{q_b}$, it suffices by Theorem \ref{fromtest} and \eqref{intdef} to establish that there is some constant $C$ such that
\begin{equation} \int_{\R^{\de}} \frac{(\eta(x,\gamma_t(x)))^{p_b'} dt}{||d_x \pi (x,\gamma_t(x))||_\omega^{p_b'-1}} \leq C \label{biggoal} \end{equation}
for all $x \in \R^n$ and all choices of basis $\{\omega_i\}_{i=1}^n$ which are normalized such that $|\det (\omega_1,\ldots,\omega_n)|=1$. We will show that, in fact, this estimate holds for all $\tilde \phi$ sufficiently near to $\phi$ with constant $C$ that grows at most like some power of the degree of the associated polynomials for $\phi$ and $\pi$.

Let $x \in \R^n$ and the basis $\{\omega_i\}_{i=1}^n$ be fixed, and assume without loss of generality that the integral on the left-hand side of \eqref{biggoal} is nonzero. Let $E \subset \R^{\de}$ be any compact set of positive measure contained in the support of $\eta(x,\gamma_t(x))$ for this fixed $x$. Let $F$ be the Nash function $||d_x \pi |_{x,\gamma_t(x)}||_{\omega}^{-1}$ (whose complexity grows no faster than some power of the degrees of $\pi$ and $\phi$ and which never vanishes on the support of the integral by the assumption we make that $D_x \phi$ is full rank at $(x_*,t_*)$) and let $\wt(t) := ||d_x \pi |_{x,\gamma_t(x)}||_{\omega}^{- n\de / (dk)}$. Because one trivially has
\[ \sup_{t \in E} (F(t))^2 ||d_x \pi |_{(x,\gamma_t(x))}||_\omega^2 \leq 1, \]
Lemma \ref{ineqlemma} applies; let $t_E$ be the distinguished value of $t$ promised by the lemma, and let $Q$ be the trilinear functional \eqref{qdef} computed at $(x,t_E)$. Because the support of $\eta$ has been made sufficiently small, the quantity ${\mathcal{Q}}$ satisfies \eqref{qnondegen2} with a constant $c$ (already discussed above) that is independent of $E$, $x$, and $t_E$. Combining \eqref{basicinequality} and \eqref{compare} gives that
\[ c (F(t_E))^2 |\det \{v_i\}|_{i=1}^d|^2 |\det \{u_i\}_{i=1}^{\de }|^{\frac{2s}{\de }} |\det \{v^*_i\}_{i=1}^k|^{\frac{2s}{k}} |\det \{w_i\}_{i=1}^d|^{\frac{2s}{d}} \lesssim 1 \]
for $s := dk/n$
with an implicit constant that does not depend on $x$ or $E$ and that grows at most like a power of the degree of $\pi$ (or equivalently, $\phi$). Because the bases $\{v_i\}_{i=1}^k$ and $\{v_i^*\}_{i=1}^k$ are dual, this means that
\begin{equation} c (F(t_E))^2 |\det \{u_i\}_{i=1}^{\de }|^{\frac{2s}{\de }} |\det \{v_i\}_{i=1}^k|^{2-\frac{2s}{k}} |\det \{w_i\}_{i=1}^d|^{\frac{2s}{d}} \lesssim 1. \label{mybound} \end{equation}
Lemma \ref{ineqlemma} guarantees that $\{u_i\}_{i=1}^{\de }$ satisfies $\wt(t_E) |\det \{u_i\}_{i=1}^{\de }| \gtrsim \wt(E)$.
The weight $\wt$ and function $F$ have the property that $\wt = (F)^{\de/s}$, so it follows that 
\begin{equation} \sqrt{c} (\wt(E))^{\frac{s}{\de }} |\det \{v_i\}_{i=1}^k|^{1-\frac{s}{k}} |\det \{w_i\}_{i=1}^d|^{\frac{s}{d}} \lesssim 1 \label{mybound2} \end{equation}
with implicit constant depending only on $d,\de,k$ and the degree of $\pi$ (as always, with growth bounded like a power of this degree).

Lemma \ref{ineqlemma} also guarantees that
$|\det \{\omega'_i\}_{i=1}^n| = 1$. For each $i \in \{1,\ldots,k\}$, let $\omega''_i$ be the unique vector in the orthogonal complement of $\ker D_x \pi |_{(x,\gamma_{\tilde t}(x))}$ such that $\omega''_i - \omega'_i \in \ker D_x \pi|_{(x,\gamma_{\tilde t}(x))}$, and when $i > k$, let $\omega''_i := \omega'_i$. Because $\omega''_i = \omega'_i + \sum_{i'=k+1}^n \theta_{ii'} \omega'_i$ for suitable coefficients $\theta_{ii'}$ and each $i \in \{1,\ldots,k\}$, $|\det \{\omega''_i\}_{i=1}^n| = 1$ as well. 
Now consider the map $M : \R^n \rightarrow \R^n$ such that  $M \omega''_i = (D_x \pi|_{(x,\gamma_{\tilde t}(x))} \omega''_i, z_1 \cdot \omega''_i,\ldots,z_d \cdot \omega''_i)$ for each $i$, where $z_1,\ldots,z_{d}$ is the orthonormal basis of $\ker D_x \pi |_{(x,\gamma_{t_E}(x))}$ from which $Q$ is defined. If $i \in \{1,\ldots,k\}$, then $M \omega''_i = (v_i,0,\ldots,0)$, and if $i \geq k$, then $M \omega''_i = (0,w_{i-k}^1,\ldots,w_{i-k}^d)$ for $w_{i}^{i'}$ as in Proposition \ref{comparetoQ}. It must be the case, then, that
\[ |\det \{v_i\}_{i=1}^k| |\det \{w_i\}_{i=1}^d| = |\det M| |\det \{\omega''_i\}_{i=1}^n| = |\det M|. \] 

Now consider the matrix $MM^T$. Because $\{z_1,\ldots,z_d\}$ is an orthonormal basis of $\ker D_x \pi|_{(x,y_j)}$, it follows that $MM^T$ has block structure 
\[ \begin{bmatrix} (D_x \pi(x,y)) (D_x \pi(x,y))^T & 0 \\ 0 & I \end{bmatrix} \]
(with $y := \gamma_{t_E}(x)$)
which means that $|\det M| = \sqrt{ \det (D_x \pi(x,y)) (D_x \pi(x,y))^T}$, (which, by Proposition 1 of \cite{testingcond}, equals  $||d_x \pi(x,y)||$). 
In particular, this means that $|\det M|$ is a continuous function of $(x,y)$ which is nonvanishing at $(x_*,y_*)$ and depends only on first derivatives of $\phi$. It may therefore be assumed that the support of the cutoff function $\eta$ has been sufficiently restricted so that $|\det M|$ is bounded uniformly below there by a constant that depends only on $\phi$ at $(x_*,t_*)$ and is stable when $\phi$ is replaced by any $\tilde \phi$ which is close to it in the $C^3$ sense. Thus \eqref{mybound2} implies that
\begin{equation} \int_{E} \frac{dt}{||d_x \pi (x,\gamma_t(x))||_\omega^{p_b'-1}} \leq C_0 \label{sublevgoal} \end{equation}
for some constant which is independent of $x, \{\omega_i\}_{i=1}^n,$ and $E$. The analogous estimate continues to hold when $\phi$ is replaced by any $\tilde \phi$ sufficiently close to it in the $C^3$ sense with a new constant $\tilde C_0$ which grows at most like some power of the degree of the perturbed map $\tilde \phi$.  Since $E$ may be any arbitrary compact set in the support of $\eta(x,\gamma_t(x))$ and since $\eta$ is bounded, it follows that \eqref{biggoal} must indeed hold uniformly in $x$ and $\{\omega_i\}_{i=1}^{n}$ and must continue to hold for all $\tilde \phi$ near $\phi$ with constant $C$ growing at most like some power of the degree of $\tilde \phi$.  Thus, not only will the operator \eqref{theop} map $L^{p_b} \rightarrow L^{q_b}$, but there must also be a form of stability: if $\phi$ is replaced by $\tilde \phi$ sufficiently close to it in the $C^3$ sense, then the new operator $\tilde T$ must also map $L^{p_b}$ to $L^{q_b}$ and the operator norm should be bounded by some power of the degree of $\tilde \phi$. This stability result will be a critical component in the passage from polynomial to smooth maps that is undertaken in Section \ref{npasec}.

\section{Necessity of the nondegeneracy condition}
\label{nesssec}

This section establishes the necessity of nondegeneracy for any operator $T$ which maps $L^p(\R^{\en})$ to $L^q(\R^n)$ for some pair $(1/p,1/q)$ close to the best possible pair $(1/p_b,1/q_b)$. Section \ref{tomodelsec} establishes a reduction to a model case in which $\phi(x,t)$ is bilinear in $x$ and $t$, and Section \ref{modelsec} then establishes necessity of nondegeneracy in this model case. The proof is built around the construction of appropriate Knapp-type examples for the model operator combined with a process (Proposition \ref{convertknapp}) to appropriately modify these examples so that they apply to non-model cases as well.

For convenience, it will be assumed throughout this section that the operator $T$ is being studied in a neighborhood of the origin $(0,0) \in \R^n \times \R^{\de}$.

\subsection{Reduction to a model case}
\label{tomodelsec}

Let $\phi(x,t)$ be a smooth $\R^k$-valued function on a neighborhood of the origin $(0,0) \in \R^n \times \R^{\de}$ and suppose that $D_x \phi$ is rank $k$ at $(0,0)$ and that $\phi(0,0) = 0$. Let $T$ be the operator \eqref{theop} associated to $\phi$, under the assumption that $\eta : \R^n \times \R^{\en} \rightarrow \R$ is a continuous function which is nonvanishing at $(0,0)$ and has the property that the support of $\eta(x,\gamma_t(x))$ is compact and contained in the given domain of $\phi$.  Rotating coordinates of $\R^n$ as necessary, it may be assumed without loss of generality that $\partial \phi / \partial x^i = 0$ at the origin for $i \in \{1,\ldots,d\}$. Every vector $x \in \R^n$ will be regarded as a pair $(x_0,x_1) \in \R^d \times \R^k$, and for simplicity, the quantity $\phi((x_0,x_1),t)$ will be written $\phi(x_0,x_1,t)$.
Let  $M := D_{x_1} \phi |_{(0,0)}$,
\begin{equation} \Theta (u,w) := \sum_{i=1}^{d} \sum_{i''=1}^{\de} \left. \frac{\partial^2 \phi}{\partial t^i \partial x_0^{i''}} \right|_{(0,0,0)} u^{i} w^{i''}, \label{thetadef} \end{equation}
and
\begin{equation} \tilde \phi(x_0,x_1,t) := M x_1 + \Theta(t,x_0). \label{modeldef} \end{equation}
Just as was the case for $\phi(x,t)$ and $\gamma_t(x)$, let $\tilde \gamma_t(x_0,x_1) := (t, \tilde \phi(x_0,x_1,t))$.
\begin{proposition}
Suppose there exists a sequence of compact sets $G_\delta \subset \R^{\en}$ and $F_\delta \subset \R^n$ for all sufficiently small positive $\delta$ which satisfy the following properties: \label{convertknapp} 
\begin{enumerate}
\item Each set $G_\delta$ and $F_\delta$ is contained in a ball of radius at most $C \delta^{-\epsilon}$ centered at the origin for some $\epsilon > 0$ and $C < \infty$.
\item There exists some $\epsilon' > 0$ such that 
\begin{equation} \limsup_{\delta \rightarrow 0^+} \delta^{\epsilon'} \frac{\int \chi_{F_\delta} (x_0,x_1) \chi_{G_\delta}(\tilde \gamma_{t}(x_0,x_1)) dx_0 dx_1 dt}{|F_\delta|^{1/q_b'} |G_\delta|^{1/p_b}} > 0. \label{scalestoobig} \end{equation}
\item The slices of the set $G_\delta$ with respect to the first $\de$ coordinates are compact and convex, i.e., 
\[ G_\delta(t) := \set{ y_1 \in \R^{k}}{ (t,y_1) \in G_\delta } \]
is compact and convex for all $t \in \R^{\de}$.
\item All slices $G_\delta(t)$ which are nonempty satisfy $|G_\delta(t)| \geq c \delta^{M}$ for some positive $M$ and $c$.
\item For all sufficiently small $\delta$, $|F_\delta| \geq c' \delta^{N}$ and $|G_\delta| \geq c' \delta^N$ for some positive $N$ and $c'$.
\end{enumerate}
Then for all pairs $(1/p,1/q)$ sufficiently near to $(1/p_b,1/q_b)$, $T$ is unbounded as a map from $L^p$ to $L^q$.
\end{proposition}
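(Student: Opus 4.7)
The plan is a rescaling--Knapp argument. The idea is to transport the given Knapp example $(F_\delta,G_\delta)$ for the model map $\tilde\gamma$ into a Knapp example for the genuine map $\gamma$ by a $\delta$-dependent zoom toward the origin; the surplus factor $\delta^{-\epsilon'}$ in \eqref{scalestoobig} will then defeat the testing inequality for $T$ not just at $(1/p_b,1/q_b)$ but throughout a small neighborhood of that point.

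Before rescaling I would normalize $\phi$. Using the volume-preserving substitutions $(y_0,y_1)\mapsto(y_0,y_1-Ay_0-g(y_0))$ on $\R^{\en}$, with $A:=D_t\phi|_{(0,0)}$ and $g$ collecting all pure-$t$ Taylor terms of $\phi$, and $(x_0,x_1)\mapsto(x_0,x_1-M^{-1}h(x_0))$ on $\R^n$, with $h$ collecting the pure-$x_0$ Taylor terms, one preserves the $L^p\rightarrow L^q$ operator norm of $T$ up to bounded factors while leaving both $\Theta$ and the model map $\tilde\gamma$ unchanged. After these reductions one may assume $\phi(x_0,x_1,t)=Mx_1+\Theta(t,x_0)+R(x_0,x_1,t)$, where every Taylor monomial $t^{a}x_0^{b}x_1^{c}$ appearing in $R$ satisfies either $c\geq 1$ with $a+b+c\geq 2$, or $a,b\geq 1$ with $a+b\geq 3$.

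With this normalization, fix exponents $\alpha=\beta>0$ (to be chosen in terms of $M,k$, and $\epsilon$) and introduce the rescalings $R_\delta(x_0,x_1):=(\delta^{\alpha}x_0,\delta^{\alpha+\beta}x_1)$, $L_\delta(y_0,y_1):=(\delta^{\beta}y_0,\delta^{\alpha+\beta}y_1)$, and $T_\delta t:=\delta^{\beta}t$. Bilinearity of $\tilde\phi$ gives $\tilde\gamma_{T_\delta t}(R_\delta x)=L_\delta\tilde\gamma_t(x)$, whereas the actual $\gamma$ satisfies $\gamma_{T_\delta t}(R_\delta x)=L_\delta\tilde\gamma_t(x)+(0,R(R_\delta x,T_\delta t))$. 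On the rescaled support, where $|x_0|,|x_1|,|t|\lesssim\delta^{-\epsilon}$ by hypothesis (1), each monomial of $R$ is bounded by $\delta^{a(\beta-\epsilon)+b(\alpha-\epsilon)+c(\alpha+\beta-\epsilon)}$, and the restrictions above force this exponent to exceed $\alpha+\beta+\min(\alpha,\beta)-2\epsilon$. Meanwhile the rescaled slice $L_\delta G_\delta$ at $\tilde t=T_\delta t$ equals $\delta^{\alpha+\beta}G_\delta(t)$: it is convex, has diameter at most $\delta^{\alpha+\beta-\epsilon}$, and, when nonempty, has volume at least $c\delta^{k(\alpha+\beta)+M}$. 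Standard convex-geometric bounds relating inradius, volume, and diameter then give that this rescaled slice has inradius at least $c_k\delta^{\alpha+\beta+M+\epsilon(k-1)}$. Choosing $\min(\alpha,\beta)>M+(k+1)\epsilon$ makes the shift polynomially smaller than the inradius, and the inequality $|K+B(0,r)|\leq(1+r/\rho)^k|K|$ (valid whenever $K$ is convex and $B(0,\rho)\subset K$ after translation) yields that the $y_1$-thickening $\hat G_\delta^{+}$ of $L_\delta G_\delta$ by the maximum shift satisfies $|\hat G_\delta^{+}|\lesssim|L_\delta G_\delta|$, while $\chi_{\hat G_\delta^{+}}(\gamma_{T_\delta t}(R_\delta x))\geq\chi_{L_\delta G_\delta}(L_\delta\tilde\gamma_t(x))$ pointwise. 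A change of variables then transports \eqref{scalestoobig} into a diverging lower bound of the form $\delta^{-\epsilon'}|R_\delta F_\delta|^{1/q_b'}|\hat G_\delta^{+}|^{1/p_b}$ for the trilinear coupling computed against the actual $\gamma$.

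To close the argument, suppose $T$ is bounded $L^p\to L^q$ for some $(1/p,1/q)$ near $(1/p_b,1/q_b)$; H\"{o}lder duality would then force the same trilinear coupling to be at most $\|T\|_{p\to q}\|\eta\|_\infty\,|R_\delta F_\delta|^{1/q'}|\hat G_\delta^{+}|^{1/p}$. At $(1/p_b,1/q_b)$ the Jacobians of the rescaling on both sides match exactly and the $\delta^{-\epsilon'}$ surplus immediately contradicts boundedness. For nearby $(1/p,1/q)$ the extra correction produced by perturbing the exponents is of the form $|F_\delta|^{1/q_b'-1/q'}|G_\delta|^{1/p_b-1/p}$, and by hypotheses (1), (4), and (5) this correction is controlled by a fixed power of $\delta$; choosing $(1/p,1/q)$ sufficiently close to $(1/p_b,1/q_b)$ keeps that power weaker than $\epsilon'$, producing the desired contradiction along the subsequence realizing the $\limsup$ in \eqref{scalestoobig}. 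The main obstacle in this plan is the thickening step: the slices $G_\delta(t)$ can be arbitrarily thin in some direction, so no uniform thickness estimate is available, and the polynomial slack between the maximum shift and the slice inradius must be engineered carefully by choosing $(\alpha,\beta)$ in terms of $M$, $k$, and $\epsilon$ so that the subsequent exponent arithmetic still leaves a genuine neighborhood of $(1/p_b,1/q_b)$ on which $T$ must fail.
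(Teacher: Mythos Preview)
Your proposal is correct and follows essentially the same strategy as the paper's proof: normalize $\phi$, rescale parabolically to reduce to the model map plus a small error, thicken $G_\delta$ in the $y_1$-direction using the convexity of the slices (the paper invokes the John ellipsoid theorem where you use the equivalent inradius bound $|K+B(0,r)|\le(1+r/\rho)^k|K|$) to absorb that error without inflating the measure, and then exploit the exact scaling at $(1/p_b,1/q_b)$ together with the polynomial measure bounds on $|F_\delta|,|G_\delta|$ to pass to nearby exponents. The only superficial difference is that the paper first reparametrizes $\delta\mapsto\delta^\theta$ to make $\epsilon$ and $M$ effectively small and then uses the fixed scaling $(\delta x_0,\delta^2 x_1,\delta t)$, whereas you keep $\delta$ and choose the scaling exponent $\alpha$ large; these are equivalent moves.
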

\begin{proof} 
By simply reparametrizing the $\delta$-dependence of the sets $F_\delta$ and $G_\delta$, substituting $\delta^{\theta}$ in the place of every $\delta$ for some small positive value of $\theta$, it may be assumed without loss of generality that $\epsilon \in (0, 1/(k+4))$ and that nonempty slices $G_\delta(t)$ satisfy $|G_\delta(t)| \geq c \delta^{1-(k+4) \epsilon}$.

Let $\phi_1 (x_0,x_1,t) := \phi(x_0,x_1 + R(x_0),t) - \phi(0,0,t)$ for some $\R^k$-valued homogeneous quadratic function $R$ to be determined momentarily. Because $\partial_{x_i^0} \phi$ vanishes at the origin for $i \leq d$, it follows that
\[ \partial_{x_0^i} |_{(0,0,0)} \phi_1 (x_0,x_1,t)  = 0 \text{ and } \partial_{t^i} \phi_1(x_0,x_1,t) = 0 \]
for each $i \in \{1,\ldots,d\}$ in the first equation and $i \in \{1,\ldots,\de\}$ in the second. Likewise
\[ \partial_{x_1^i} \phi_1 = \partial_{x_1^i}  \phi, \ \ \partial_{t^i x_0^{i'}}^2  \phi_1 = \partial_{t^i x_0^{i'}}^2  \phi, \ \text{ and } \ \partial^2_{t^{i} t^{i'}} \phi_1 = 0\]
when each quantity is evaluated at $(0,0,0)$ and 
\[ \partial^2_{x_0^i x_0^{i'}}  |_{(0,0,0)}  \phi_1(x_0,x_1,t) = \partial^2_{x_0^i x_0^{i'}}|_{(0,0,0)} \phi + D_{x_1} \phi|_{(0,0,0)} \partial^2_{x_0^i x_0^{i'}}|_{(0,0,0)} R(x_0). \]
Since $D_{x_1} \phi |_{(0,0,0)}$ is invertible, there is a choice of $R$ which makes all second derivatives $\partial^2_{x_0^i x_0^{i'}} \phi_1$ vanish at the origin.
By virtue of Taylor's theorem with remainder, then, one has the identity
\[ \phi_1(x_0,x_1,t) = M x_1 + \Theta(t,x_0) + E(x_0,x_1,t) \]
where $E$ is expressible as a quadratic function of $x_1$ plus a bilinear function of $x_1$ and $t$ and finitely many terms which equal cubic monomials in $(x_0,x_1,t)$ times smooth functions.  Consequently
\[ \phi_1(\delta x_0,\delta^2 x_1,\delta t) = \delta^2 (M x_1 + \Theta(t,x_0)) + O(\delta^{3-3 \epsilon}) \]
uniformly on the sets $K_\delta := \set{ (x_0,x_1,t)}{|x_0| + |x_1| + |t| < C \delta^{-\epsilon}}$ as $\delta \rightarrow 0$. For every small $\delta$, let $\phi_\delta(x_0,x_1,t) := \delta^{-2} \phi_1(\delta x_0,\delta^2 x_1,\delta t)$ and $\gamma_{\delta,t}(x_0,x_1) := (t,\phi_\delta(x_0,x_1,t))$.
Then
\begin{equation} \phi_\delta(x_0,x_1,t) = \tilde \phi(x_0,x_1,t) + O(\delta^{1-3\epsilon}) \label{ellipseclose} \end{equation}
uniformly on $K_{\delta}$ as $\delta \rightarrow 0$.

Now suppose $F_\delta, G_\delta$, and $\epsilon' > 0$ satisfy \eqref{scalestoobig} and that slices of $G_\delta$ have the required properties. Let $G'_\delta(t)$ be the set of all points $y_1'$ such that $|y'_1 - y_1| \leq \delta^{1- 4 \epsilon}$ for some $y_1 \in G_\delta(t)$ and let $G'_\delta$ be the set whose slices are $G'_\delta(t)$ for each $t$.  By the John Ellipsoid Theorem, each nonempty $G_\delta(t)$ admits an ellipsoid $E \subset \R^k$ such that
\[ E \subset G_\delta(t) \subset k E \]
(where $k E$ is the ellipsoid with the same center as $E$ with all axes stretched by a factor of $k$).
From this it follows that $|E| \leq |G_\delta(t)| \leq k^k |E|$. As the measure of $G_\delta(t)$ is at least $c \delta^{1 - \epsilon (k+4)}$ and as no axis of $E$ can be longer than $C \delta^{-\epsilon}$, it follows that there is some fixed constant $C'$ depending only on $C$, $c$, and $k$ such that every axis of $E$ is at least $C '\delta^{1-5 \epsilon}$ for all sufficiently small $\delta$. In particular, for sufficiently small $\delta$, the ellipsoid $2k E$ will necessarily contain the vector sum of $G_\delta(t)$ and the ball of radius $\delta^{1-4 \epsilon}$. As a consequence, this means that $|G'_\delta| \leq (2k)^k |G_\delta|$ for all sufficiently small $\delta$. Moreover, for sufficiently small $\delta$ if $(x_0,x_1) \in F_\delta$ and $\tilde \gamma_t(x_0,x_1) \in G_\delta$, then $\tilde \phi(x_0,x_1, t)$ belongs to the slice $G_\delta(t)$, so by \eqref{ellipseclose}, $\phi_\delta(x_0,x_1,t) \in G'_\delta(t)$, or $\gamma_{\delta,t}(x_0,x_1) \in G'_\delta$. Thus 
\[ \int \chi_{F_\delta}(x_0,x_1) \chi_{G'_\delta} ( \gamma_{\delta,t}(x_0,x_1)) dx dt \geq \int \chi_{F_\delta}(x_0,x_1) \chi_{G_\delta}(\tilde \gamma_t(x_0,x_1)) dx dt. \]
By \eqref{scalestoobig}, it is necessarily the case that 
\begin{equation} \limsup_{\delta \rightarrow 0^+} \delta^{\epsilon'} \frac{\int \chi_{F_\delta} (x_0,x_1) \chi_{G_\delta'}(\gamma_{\delta,t}(x_0,x_1)) dx_0 dx_1 dt}{|F_\delta|^{1/q_b'} |G_\delta'|^{1/p_b}} > 0. \label{stoo2} \end{equation}
A series of changes of variables (replacing $x_0$ by $\delta^{-1} x_0$, $x_1$ by $\delta^{-2} x_1$, $t$ by $\delta^{-1} t$, and finally $x_1$ by $x_1 - R(x_0)$) and recalling that $\phi_1 (x_0,x_1,t) = \phi(x_0,x_1 + R(x_0),t) - \phi(0,0,t)$ and $\phi_\delta = \delta^{-2} \phi_1(\delta x_0,\delta^2 x_1,\delta t)$ 
yields the identity
\[ \begin{split}
\int  \chi_{F_\delta} (x_0,x_1) \chi_{G_\delta'}(\gamma_{\delta,t}(x_0,x_1))&  dx_0 dx_1 dt = \\
  \delta^{-2k - d - \de} \int & \chi_{F_\delta}(\delta^{-1} x_0,\delta^{-2} (x_1 - R(x_0))) \\ & \cdot \chi_{G'_\delta} (\delta^{-1} t, \delta^{-2} (\phi(x_0,x_1,t) - \phi(0,0,t))) dx dt.
\end{split} \]
Now let $F'_\delta$ and $G''_\delta$ be defined so that
\[ \chi_{F'_\delta}(x_0,x_1) := \chi_{F_\delta}(\delta^{-1} x_0,\delta^{-2} (x_1 - R(x_0))) \]
and
\[ \chi_{G''_\delta}(y_0,y_1) := \chi_{G'_\delta} (\delta^{-1} y_0, \delta^{-2} (y_1 - \phi(0,0,y_0))). \]
It follows that $|F'_\delta| = \delta^{d + 2k} |F_\delta|$ and $|G''_\delta| = \delta^{\de + 2k} |G'_\delta|$, and for the particular exponents $p_b$ and $q_b'$, 
\[ |F'_\delta|^{1/q_b'} |G''_\delta|^{1/p_b} = \delta^{2k+d+\de} |F_\delta|^{1/q_b'} |G_\delta'|^{1/p_b}. \]
It follows by \eqref{stoo2} that
\begin{equation*} \limsup_{\delta \rightarrow 0^+} \delta^{\epsilon'} \frac{\int \chi_{F_\delta'} (x_0,x_1) \chi_{G_\delta''}(\gamma_{t}(x_0,x_1)) dx_0 dx_1 dt}{|F_\delta'|^{1/q_b'} |G_\delta''|^{1/p_b}} > 0.  \end{equation*}
Because  $F_\delta'$ and $G_\delta''$ are contained in any given small balls around the origin in $\R^n$ and $\R^{\en}$ (respectively) for all sufficiently small $\delta$, continuity of $\eta$ and the nonvanishing of $\eta(0,0)$ give that
\[ \begin{split}
\eta(0,0) \int & \chi_{F_\delta'} (x_0,x_1) \chi_{G_\delta''}(\gamma_{t}(x_0,x_1)) dx_0 dx_1 dt \\ & \approx \int \chi_{F'_\delta}(x) \chi_{G''_\delta}(\gamma_t(x)) \eta(x,\gamma_t(x)) dx dt \end{split} \]
for all sufficiently small $\delta$ (with implicit constants for both the lower and upper bounds which tend to $1$ as $\delta \rightarrow 0^+$). Finally, because $\epsilon'$ is positive and the measures of $F'_\delta$ and $G''_\delta$ are bounded between some fixed positive and negative powers of $\delta$, it follows that
\begin{equation*} \limsup_{\delta \rightarrow 0^+}  \frac{\left| \int \chi_{F_\delta'} (x) \chi_{G_\delta''}(\gamma_{t}(x)) \eta(x,\gamma_t(x)) dx dt \right|}{|F_\delta'|^{1/q'} |G_\delta''|^{1/p}}  = \infty  \end{equation*}
for all $1/p$ and $1/q$ sufficiently close to $(1/p_b,1/q_b)$. This means that $T$ is not of restricted weak type $(p,q)$ and hence not bounded from $L^p$ to $L^q$, either.
\end{proof}

\subsection{Analysis of the model case}
\label{modelsec}
Recall the definition of the model case \eqref{modeldef}. By the linear change of variables $x_1 \rightarrow M^{-1} x_1$ it may be assumed without loss of generality that $M$ is the identity (because the effect on $L^q(\R^n)$ norms is merely to multiply by a nonzero constant) and consequently it may be assumed that
\[ \tilde \phi(x_0,x_1,t) := x_1 + \Theta(t,x_0) \text{ and } \tilde \gamma_t(x_0,x_1) := (t, x_1 + \Theta(t,x_0)) \]
for $\Theta$ given by \eqref{thetadef}. Suppose that $\{\omega_i\}_{i=1}^n$ is any given basis of $\R^n$ and that
\[ B_\omega := \set{ \sum_{i=1}^n \theta_i \omega_i }{ \sum_{i=1}^n \theta_i^2 \leq 1}. \]
Let $L$ be any linear map from $\R^n$ to $\R^k$ (where as usual, $k < n$), and let $\omega$ be the $n \times n$ matrix whose $i$-th column equals $\omega_i$ as expressed in standard coordinates. The set $L B_\omega$ is then simply equal to $L \omega \mathbb{B}_n$ when $\mathbb{B}_n$ is the $n$-dimensional Euclidean unit ball. By the Singular Value Decomposition, there exist orthogonal matrices $O_1$ and $O_2$ such that $L \omega = O_1 D O_2$ for some $k \times n$ matrix $D$ whose only nonzero elements are on the diagonal (and are nonnegative). Since $\mathbb{B}_n = O_2 \mathbb{B}_n$, it follows that $L B_\omega = O_1 D \mathbb{B}_n$, which means that $L B_\omega$ is an ellipsoid centered at the origin in $\R^k$ and its semiaxes are simply the diagonal elements of $D$. The volume of $L B_\omega$ is exactly $|\mathbb{B}_k| \sqrt{\det (L \omega) (L \omega)^T}$, and by Proposition 1 of \cite{testingcond}, this quantity also equals
\begin{equation} |\mathbb{B}_k| \sqrt{ \frac{1}{k!} \sum_{i=1}^{n} \left| \det (\tup{L \omega_{i}}{k}) \right|^2}. \label{multidef} \end{equation}
If $L_t$ is taken to be the Jacobian matrix $D_{x} \tilde \phi(x_0,x_1,t)$ (i.e., the Jacobian of $\tilde \phi$ with respect to the $x$ variables) and if $\{\omega_i\}_{i=1}^n$ is chosen to have the special form that $\omega_i := (w_i,0) \in \R^d \times \R^k$ when $i \in\{1,\ldots,d\}$ and $\omega_{i+d} := (0,v_i) \in \R^d \times \R^k$ for $i \in \{1,\ldots,k\}$, then for any fixed $t$, the expression \eqref{multidef} becomes
\[ |\mathbb{B}_k| \sqrt{ \sum_{s=0}^{\min\{d,k\}} \frac{1}{s!(k-s)!} \sum_{i'=1}^{k} \sum_{i''=1}^d \left| \det ( \tup{v_{i'}}{k-s},\tup{\Theta(t,w_{i''})}{s}) \right|^2} \]
because $L \omega_{i''} = \Theta(t,w_{i''})$ for $i'' = 1,\ldots,d$ and $L \omega_{i'+d} = v_{i'}$ for $i'=1,\ldots,k$ (where $\Theta$ is as in \eqref{inversionthing}).

Heuristically, in order to invoke Proposition \ref{convertknapp} to establish unboundedness of $T$, the goal is to construct Knapp-type example sets $F_\delta$ and $G_\delta$ in such a way that the $F_\delta$ sets are ellipsoids $B_\omega$ (for some basis $\omega$ chosen to depend on $\delta$) and the $G_\delta$ sets have the property that their nonempty slices $G_\delta(t)$ are sets of the form $L_t B_\omega$ for all $t$ in some ellipsoid of its own. The problem is that, although there is an exact formula for the measure of $L_t B_\omega$, the dependence on $t$ is opaque. The solution will be to establish that the size of $L_t B_\omega$ is often as large as it can reasonably be and then to restrict to the subset of those $t$ such that good lower and upper bounds for $|L_t B_\omega|$ hold. (As a consequence, the set of $t$ for which $G_\delta(t)$ is nonempty will not be an ellipsoid {per se} but merely some substantial fraction of a suitable ellipsoid.)
At this point, we need an auxiliary result concerning homogeneous polynomials on the unit ball:
\begin{proposition}
For each natural number $s$ and each $\epsilon \in (0,1)$, there is a positive constant $c$ such that
\begin{equation} \left| \set{ t \in {\mathbb{B}}_{\de} }{ |P(t)| < c \sup_{t' \in \mathbb{B}_{\de}} |P(t')|} \right| \leq \epsilon |\mathbb{B}_{\de}| \label{elipsedistro} \end{equation}
for all real homogeneous polynomials $P$ of degree $s$ on $\R^{\de}$, where $\mathbb{B}_{\de}$ is the Euclidean unit ball.
\end{proposition}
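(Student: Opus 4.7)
The plan is to exploit the fact that the space $V_s$ of real homogeneous polynomials of degree $s$ on $\R^{\de}$ is a finite-dimensional real vector space. Equipped with the norm $\|P\| := \sup_{t \in \mathbb{B}_{\de}} |P(t)|$, its unit sphere $S := \{P \in V_s : \|P\| = 1\}$ is compact. Since both sides of \eqref{elipsedistro} are invariant under the scaling $P \mapsto \lambda P$ for $\lambda > 0$, and since the inequality is trivial when $\sup |P| = 0$, it suffices to produce a single positive constant $c$ such that $|\{ t \in \mathbb{B}_{\de} : |P(t)| < c\}| \leq \epsilon |\mathbb{B}_{\de}|$ for every $P \in S$.

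The first step is a pointwise-in-$P$ observation: for each fixed $P \in S$, the zero set of $P$ has Lebesgue measure zero, since it is the vanishing locus of a nonzero polynomial. Continuity of measure from above therefore gives $|\{ t \in \mathbb{B}_{\de} : |P(t)| < c\}| \to 0$ as $c \to 0^+$. In particular, for each $P \in S$ there is some $c_P > 0$ such that $|\{t \in \mathbb{B}_{\de} : |P(t)| < 2 c_P\}| < \tfrac{1}{2} \epsilon |\mathbb{B}_{\de}|$.

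The second step is to promote these pointwise bounds to a uniform bound by a simple perturbation-plus-compactness argument. If $P' \in V_s$ satisfies $\|P - P'\| < c_P$, then $|P'(t) - P(t)| < c_P$ for every $t \in \mathbb{B}_{\de}$, so the sublevel set $\{t \in \mathbb{B}_{\de} : |P'(t)| < c_P\}$ is contained in $\{t \in \mathbb{B}_{\de} : |P(t)| < 2 c_P\}$ and inherits the same measure bound. Thus the open ball $B(P, c_P) \subset V_s$ (in the $\|\cdot\|$-norm) consists entirely of polynomials satisfying the desired inequality with threshold $c_P$. Compactness of $S$ yields a finite subcover $B(P_1,c_{P_1}), \ldots, B(P_N, c_{P_N})$, and setting $c := \min_{1 \leq i \leq N} c_{P_i}$ produces a constant that works for every $P \in S$ and hence, after rescaling, for all nonzero $P \in V_s$.

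The only mildly delicate point is verifying that the zero set of a nonzero polynomial has Lebesgue measure zero and that the measures of the open sublevel sets shrink to this as $c \to 0^+$; both are standard (the first by induction on $\de$ combined with Fubini, the second by monotone convergence applied to the decreasing intersection), so no real obstacle is anticipated.
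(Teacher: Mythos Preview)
Your proof is correct and follows essentially the same approach as the paper: both exploit compactness of the unit sphere in the finite-dimensional space $V_s$ together with the fact that a nonzero polynomial has a Lebesgue-null zero set. The only cosmetic difference is that the paper argues by contradiction via sequential compactness (extracting a convergent subsequence $P_N \to P_*$ whose limit would have a positive-measure zero set), whereas you argue directly via an open cover and finite subcover; the underlying idea is identical.
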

\begin{proof} Since \eqref{elipsedistro} is vacuously true when $P$ is identically zero, if \eqref{elipsedistro} failed to hold for any $c > 0$, then, after renormalizing, there would be a sequence of polynomials $\{P_{N}\}_{N=1}^\infty$ such that $\sup_{t \in \mathbb{B}_{\de}} |P_N(t')| = 1$ for each $N$ but have $|P_N(t')| \leq 1/N$ for all $t' \in \mathbb{B}_{\de}$ in a set of measure at least $\epsilon |\mathbb{B}_{\de}|$ for each $N$. Because the space of homogeneous polynomials of degree $s$ is finite-dimensional, it is always possible to pass to a subsequence which converges uniformly on compact sets to some limit polynomial $P_*$. The limit polynomial cannot be identically zero because it must attain the value $1$ on the unit ball. Moreover, for any $\delta > 0$, there must be some $N$ for which $|P_N(t') - P_*(t')| \leq \delta/2$ for all $t' \in {\mathbb B}_{\de}$ and some set of measure at least $\epsilon |\mathbb{B}_{\de}|$ on which $|P_N| \leq \delta/2$, which means that the sublevel set $\set{t \in \mathbb{B}_{\de}}{ |P_*(t)| \leq \delta}$ must have measure at least $\epsilon |\mathbb{B}_{\de}|$ for each $\delta > 0$. By the Lebesgue Dominated Convergence Theorem applied to the indicator functions $\chi_{|P_*(t)| \leq \delta}$, sending $\delta$ to $0$ implies that the set of points $t \in {\mathbb B}_{\de}$ at which $P_*(t) = 0$ must also have Lebesgue measure at least $\epsilon |\mathbb{B}_{\de}|$, which is impossible because $P_*$ is not identically zero.
\end{proof}
By a linear change of variables in \eqref{elipsedistro}, the analogous result holds when $\mathbb{B}_{\de}$ is replaced with any centered ellipsoid $E \subset \R^{\de}$. Now each expression
\[ \det ( \tup{v_{i'}}{k-s},\tup{\Theta(t,w_{i''})}{s}) \]
is a homogeneous polynomial of degree $s$ in the $t$ variables, so for any fixed $\epsilon \in (0,1)$, it may be assumed that there is some $c > 0$ for which 
\[ |\det ( \tup{v_{i'}}{k-s},\tup{\Theta(t,w_{i''})}{s})|^2 \geq c \sup_{t' \in E} |\det ( \tup{v_{i'}}{k-s},\tup{\Theta(t',w_{i''})}{s})|^2 \]
for all $t \in E$ aside from some exceptional set of measure at most $\epsilon |E|$. By fixing $\epsilon$ sufficiently small, it follows that on any centered ellipsoid $E$, there is a subset $E'$ of measure at least $|E|/2$ such that
\[ \begin{split}
\sum_{s=0}^{\min\{d,k\}} \frac{1}{s!(k-s)!} \sum_{i'=1}^{k} &  \sum_{i''=1}^d \inf_{t \in E'} \left| \det ( \tup{v_{i'}}{k-s},\tup{\Theta(t,w_{i''})}{s}) \right|^2 \\
 & \geq c \sum_{s=0}^{\min\{d,k\}} \sum_{i'=1}^{k} \sum_{i''=1}^d \sup_{t \in E} \left| \det ( \tup{v_{i'}}{k-s},\tup{\Theta(t,w_{i''})}{s}) \right|^2 \end{split} \]
 for some constant $c$ that depends only on $\de, d$, and $k$.
 As a consequence of this observation combined with \eqref{inversionthing} and \eqref{qcomp}, when $E := B_u$ for some basis $\{u_i\}_{i=1}^{\de}$ of $\R^{\en}$, there always exists a set $B_u' \subset B_u$ with $|B_u'| \geq |B_u|/2$ on which
 \[ \inf_{t \in B_u'} |L_t B_\omega| \geq c_{\de,d,k} |\det\{v_i\}_{i=1}^{k}| \mathcal{Q} [ \{u_i\}_{i=1}^{\de}, \{v_i^*\}_{i=1}^k,\{w_i\}_{i=1}^d], \]
where $\mathcal{Q}$ is exactly \eqref{qsumdef} associated to $\phi$ at the origin with the kernel basis being simply the standard basis in the $x_0$ variables. (Note that we may assume by inner regularity of Lebesgue measure that $B'_u$ is compact.) An upper bound for $|L_t B_\omega|$ in terms of this same expression is trivial thanks to \eqref{qcomp}, and so one has
\[ |L_t B_\omega| \approx |\det\{v_i\}_{i=1}^{k}| \mathcal{Q} [ \{u_i\}_{i=1}^{\de}, \{v_i^*\}_{i=1}^k,\{w_i\}_{i=1}^d] \]
for all $t \in B_u'$, with implicit constants depending only on $\de,d,$ and $k$. 

Under the assumption that $Q$ is degenerate, it follows by \eqref{hilbertmumford} that there exist orthonormal choices of $\{u_i\}_{i=1}^{\de}$, $\{v_i^*\}_{i=1}^k$, and $\{w_i\}_{i=1}^d$ along with matrices $D_1 ,D_2,D_3$ diagonal in these bases such that
\begin{equation} \frac{\mathcal{Q} [\{e^{\tau D_1 } u_i\}_{i=1}^{\de}, \{e^{\tau D_2} v_i^*\}_{i=1}^k,\{e^{\tau D_3} w_i\}_{i=1}^d]}{|\det \{e^{\tau D_1 } u_i\}_{i=1}^{\de }|^{\frac{dk}{n \de }}} = O(e^{- \epsilon \tau}) \label{hm3} \end{equation}
for some $\epsilon > 0$ as $\tau \rightarrow \infty$. It is further known that $|\det \{e^{\tau D_2} v_i^*\}_{i=1}^k| = 1$ and $|\det \{e^{\tau D_3} w_i\}_{i=1}^d| = 1$ for all $\tau$.

Proceed as follows: for each $\tau > 0$, let $\omega^{(\tau)}$ be the basis of $\R^n$ comprised of vectors of the form $(0, e^{-\tau D_2} v_i)$ for $i=1,\ldots,k$ and $(e^{\tau D_3} w_{i'},0)$ for $i' =1,\ldots,d$ (the negative sign in $e^{-\tau D_2}$ is appropriate because these are dual to the basis $\{e^{\tau D_2} v_i^*\}_{i=1}^k$). Let $\delta :=  e^{-\tau}$ and fix $F_\delta := B_{\omega^{(\tau)}}$ and let $G_\delta$ be the set
\[ \set{ (t, y) \in B_{e^{\tau D_1} u}' \times \R^k}{ y = x_1 + \Theta(t,x_0) \text{ for some } (x_0,x_1) \in F_\delta}. \]
It follows that
\[ \int \chi_{F_\delta}(x) \chi_{G_\delta}(\tilde \gamma_t(x)) dx dt = |F_\delta| |B_{e^{\tau D_1} u}'| \approx |\det e^{\tau D_1}| \]
because $\chi_{F_\delta}(x) \chi_{G_\delta}(\tilde \gamma_t(x))$ equals one for every $(x_0,x_1) \in F_\delta$ and every $t \in B'_{e^{\tau D_1} u}$. Also used here are the facts that $|F_\delta|$ is constant as a function of $\delta$ and that $|B'_{e^{\tau D_1} u}|$ is comparable to $|B_{e^{\tau D_1} u}|$. The implicit constant depends only on $\de,d,$ and $k$. On the other hand, 
\[ |G_\delta| \approx |\det e^{\tau D_1}| \mathcal{Q} [\{e^{\tau D_1 } u_i\}_{i=1}^{\de}, \{e^{\tau D_2} v_i^*\}_{i=1}^k,\{e^{\tau D_3} w_i\}_{i=1}^d] \]
because slices $G_\delta(t)$ are exactly equal to $L_t B_{\omega^{(\tau)}}$ when $t \in B'_{e^{\tau D_1} u}$ and are empty otherwise (and because $|\det \{ e^{-\tau D_2}v_i \}_{i=1}^k| = 1$). Once again,
the implicit constants depend only on the basic parameters $\de,d,$ and $k$. These observations combine with \eqref{hm3} to give that
\begin{equation}
\begin{split}
& \frac{\int \chi_{F_\delta}(x) \chi_{G_\delta}(\tilde \gamma_t(x)) dx dt}{|F_\delta|^{1/q_b'} |G_\delta|^{1/p_b}} \\ & \approx
\left[ \frac{|\det e^{\tau D_1}|^{p_b-1}}{\mathcal{Q}[\{e^{\tau D_1 } u_i\}_{i=1}^{\de}, \{e^{\tau D_2} v_i^*\}_{i=1}^k,\{e^{\tau D_3} w_i\}_{i=1}^d]} \right]^{1/p_b} \geq c e^{\tau \epsilon / p_b} = c \delta^{-\epsilon/p_b}
\end{split} \label{itsstb}
\end{equation}
for all sufficiently small $\delta$.  The compact sets $F_{\delta}$ and $G_{\delta}$ therefore satisfy all the required properties of Proposition \ref{convertknapp} because the lengths of vectors in the bases $\{e^{\tau D_1 } u_i\}_{i=1}^{\de}, \{e^{\tau D_2} v_i^*\}_{i=1}^k$, and $\{e^{\tau D_3} w_i\}_{i=1}^d$ grow at most exponentially in $\tau$ (and therefore like a power of $\delta^{-1}$), the inequality \eqref{scalestoobig} holds (by virtue of \eqref{itsstb}), the sets $G_{\delta}$ have slices which are ellipsoids whose measure is no smaller than exponential decay in $\tau$, and the measures of $G_\delta$ and $F_\delta$ are likewise no smaller than exponentially decaying in $\tau$. Thus the operator \eqref{theop} must be unbounded from $L^p$ to $L^q$ for all pairs $(1/p,1/q)$ near $(1/p_b,1/q_b)$. As the model operator based on $\tilde \gamma_t(x)$ depends only on the quantity $Q$ defined for \eqref{theop} at the origin, the neighborhood of pairs near $(1/p_b,1/q_b)$ for which $T$ is unbounded may be assumed to depend only on $Q$ at the origin.

\section{Nonpolynomial averages}
\label{npasec}

This final section deals with the passage from polynomial $\phi$ to the $C^\infty$ category. Section \ref{genapprox} establishes the abstract tools necessary for the transition, designed to be of continued use for the future study of degenerate objects. Section \ref{specapprox} then applies these tools to the main case of interest, namely, nondegeneracy of the trilinear form $Q$ from \eqref{qdef}.

\subsection{General approximation results}
\label{genapprox}

As has been the case throughout, suppose $\phi(x,t)$ is a smooth $\R^k$-valued function of its parameters $(x,t)$ for all such points belonging to some open set $U \subset \R^n \times \R^{\de}$. Assume that the Jacobian matrix $D_x \phi$ is full rank at $(x_*,t_*)$ and let $\gamma_t(x) := (t,\phi(x,t)) \in \R^{\en}$ for each $(x,t) \in U$. The first proposition below establishes local $L^p$-boundedness of \eqref{theop}.
\begin{proposition}
If $\eta$ is a continuous function on $\R^n \times \R^{\en}$ such that $\eta(x,\gamma_{t}(x))$ is compactly supported sufficiently close to the point $(x_*,t_*)$, then the operator \eqref{theop} maps $L^p(\R^{\en})$ to $L^p(\R^n)$ for all $1 \leq p \leq \infty$. \label{lpbound}
\end{proposition}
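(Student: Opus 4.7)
The plan is to establish boundedness at the two endpoints $p=\infty$ and $p=1$ separately and then interpolate via Riesz–Thorin. Since the support hypothesis on $\eta(x,\gamma_t(x))$ is local near $(x_*,t_*)$, I will shrink it as needed so that $D_x\phi$ retains full rank $k$ throughout the support (which is possible by continuity from the assumed full rank at $(x_*,t_*)$) and so that $\det(D_x\phi)(D_x\phi)^T$ is bounded below there by a positive constant.

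The $L^\infty \to L^\infty$ bound is immediate. Since $\eta$ is continuous and $\eta(x,\gamma_t(x))$ has compact support, there is a compact set $K \subset \R^n$ outside of which $Tf$ vanishes identically and, for each $x \in K$, the set of $t$ for which $\eta(x,\gamma_t(x)) \neq 0$ has bounded Lebesgue measure uniformly in $x$. Thus $|Tf(x)| \leq \|f\|_\infty \int |\eta(x,\gamma_t(x))|\,dt \leq C\|f\|_\infty$.

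The $L^1 \to L^1$ bound is the main content. By Fubini,
\[ \|Tf\|_1 \leq \int_{\R^{\de}}\!\!\int_{\R^n} |f(\gamma_t(x))| \,|\eta(x,\gamma_t(x))|\,dx\,dt. \]
For fixed $t$, since the map $x \mapsto \phi(x,t)$ has full rank $k$ on the support, the coarea formula (applied in the form $\int h(x)\,dx = \int_{\R^k}\!\int_{\phi(\cdot,t)^{-1}(z)} h(x)\, \tfrac{d\mathcal{H}^d(x)}{\sqrt{\det(D_x\phi)(D_x\phi)^T}}\,dz$) converts the inner integral into
\[ \int_{\R^k} |f(t,z)| \left[ \int_{\phi(\cdot,t)^{-1}(z)} \frac{|\eta(x,(t,z))|}{\sqrt{\det(D_x\phi)(D_x\phi)^T}}\, d\mathcal{H}^d(x) \right] dz. \]
The bracketed quantity is bounded uniformly in $(t,z)$: the integrand is supported in the fixed compact support of $\eta$, the denominator is bounded below by a positive constant, and the fiber $\phi(\cdot,t)^{-1}(z) \cap \operatorname{supp}\eta$ has $d$-dimensional Hausdorff measure bounded uniformly (e.g., by a co-area-type estimate using that $\phi(\cdot,t)$ is a submersion with full-rank differential on the compact support). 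Integrating in $t$ and using compact support in $t$ gives $\|Tf\|_1 \leq C\|f\|_1$.

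Riesz–Thorin interpolation between these two endpoint bounds yields the claim for all $1 \leq p \leq \infty$. The only real subtlety is the uniform bound on the fiber integral in the coarea step; this is where I would be most careful, but it is a routine consequence of compactness of support combined with continuity of $D_x\phi$ and the full-rank assumption, which together give uniform quantitative control on the geometry of the fibers $\phi(\cdot,t)^{-1}(z)$ throughout the support.
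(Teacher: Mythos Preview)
Your proof is correct and follows the same strategy as the paper: interpolate between $p=1$ and $p=\infty$, handling $p=1$ via the submersion structure of $x \mapsto \phi(x,t)$. The paper executes the $p=1$ step by choosing coordinates so that $(t,x^1,\ldots,x^k) \mapsto (t,\phi(x,t))$ is a local diffeomorphism for each fixed $(x^{k+1},\ldots,x^n)$ and changing variables directly (which sidesteps the uniform fiber-measure estimate you flag as the main subtlety), but your coarea formulation is equivalent.
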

\begin{proof}
By interpolation, it suffices to consider the cases $p=1$ and $p=\infty$.  Consider first $p = 1$. Without loss of generality, one may assume that the map $(t,x^1,\ldots,x^k) \mapsto (t,\phi(x,t))$ is locally injective with nondegenerate Jacobian for each fixed $x^{k+1},\ldots,x^n$ provided that the points $x$ and $t$ are sufficiently close to $x_*$ and $t_*$, respectively. Letting $K \in \R^d$ be a compact set such that $\eta(x,\gamma_t(x)) = 0$ when $(x^{k+1},\ldots,x^n) \not \in K$, it follows that
\begin{align*}
\int & \left| \int g(\gamma_t(x)) \eta(x,\gamma_t(x)) dt \right| dx \leq \int |g(t,\phi(x,t))| |\eta(x,\gamma_t(x)) | dt dx \\
& \leq \int_K \left( \int |g(t,\phi(x,t))| |\eta(x,\gamma_t(x)) | dt dx^1 \cdots dx^{k} \right) dx^{k+1} \cdots dx^{n} \\
& \leq C |K| ||g||_{L^1} ||\eta||_\infty 
\end{align*}
with $C$ being the supremum of $1/|\det \partial \phi(x,t) / \partial (x^1,\ldots,x^k)|$ over those $(x,t)$ for which $\eta(x,\gamma_t(x)) \neq 0$. Making the support of $\eta(x,\gamma_t(x))$ sufficiently close to $(x_*,t_*)$ ensures that $C$ is finite.

When $p = \infty$,  $|g(y)| \leq ||g||_\infty$ for all $y$ not belonging to some null set; by inequality just proved for $p=1$, almost every $x$ has the property that $\gamma_t(x)$ belongs to this null set only for a null set of parameters $t$. So redefining $g$ to be zero on this null set preserves the value of $T g(x)$ at almost every point $x$, and consequently it suffices to assume that $|g(y)| \leq ||g||_\infty$ everywhere. Then
\[ |T g(x)| \leq ||g||_\infty \int |\eta(x,\gamma_t(x))| dt, \]
and the integral on the right-hand side will be uniformly bounded for all $x$ because $\eta(x,\gamma_t(x))$ is bounded and can only be nonzero for some compact set of $t \in \R^{\de}$.
\end{proof}

The next proposition establishes a very rudimentary sort of $L^2$-Sobolev inequality for operators $T$ satisfying the H\"{o}rmander condition. The estimates provided here will be just enough to give crucial decay when summing terms of the Littlewood-Paley decomposition of $T$.
\begin{proposition} 
Suppose the double fibration $\pi_1(x,t) := \gamma_t(x)$ and $\pi_2(x,t) := x$ of the open set $U \subset \R^n \times \R^{\de}$ satisfies the H\"{o}rmander condition at the point $(x_*,t_*)$. Then for all smooth $\eta$ with $\eta(x,\gamma_t(x))$ supported sufficiently near $(x_*,t_*)$ and for all $N$ sufficiently large, the operator \eqref{theop} has the property that $(T^* T)^N$ is given by integration against a kernel $K_N(y,y')$ such that
\begin{equation}
\int \left| K_N(y,y'+h) - K_N(y,y') \right| dy' \leq C |h|^\delta \label{l1delta}.
\end{equation}
uniformly in $y,h \in \R^{\en}$ for some positive $\delta$. As a consequence, if $\psi$ is any continuous function of compact support on $\R^{\en}$ whose integral vanishes and $R$ is convolution with $\psi$ (i.e., $Rf := f * \psi$), then for any $N$ sufficiently large that \eqref{l1delta} holds,
\begin{equation}
|| T R f||_2 \leq ||\psi||_1^{1-\frac{1}{2N}} \left[ C \int |h|^\delta |\psi(h)| dh \right]^\frac{1}{2N} ||f||_2
\label{cancell1}
\end{equation}
for all $f \in L^2(\R^{\en})$, where $C$ and $\delta$ are the same as in \eqref{l1delta}. \label{cancelprop}
\end{proposition}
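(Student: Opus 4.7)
The plan splits naturally along the two assertions. For part 1, the approach is to invoke the $L^2$-Sobolev smoothing estimate for generalized Radon transforms under the H\"{o}rmander condition, as established by Christ, Nagel, Stein, and Wainger \cite{cnsw1999}: for $\eta(x,\gamma_t(x))$ supported in a sufficiently small neighborhood of $(x_*,t_*)$, the operator $T^*T$ gains a fixed positive amount $s_0 > 0$ of regularity on $L^2$, so that $(T^*T)^N : L^2(\R^{\en}) \to H^{Ns_0}(\R^{\en})$ with operator norm depending only on $\eta$ and the geometry. Self-adjointness of $T^*T$ and duality give $(T^*T)^N : H^{-Ns_0} \to L^2$, and composing yields $(T^*T)^{2N} : H^{-Ns_0} \to H^{Ns_0}$.

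Choose $N$ large enough that $Ns_0 > \en/2$, set $\delta := \min\{Ns_0 - \en/2, 1\}$, and use the Sobolev embedding $H^{Ns_0}(\R^{\en}) \hookrightarrow C^{0,\delta}(\R^{\en})$. Apply $(T^*T)^{2N}$ to the tempered distribution $\delta_{y'} - \delta_{y'+h}$, whose $H^{-Ns_0}$-norm is $O(|h|^{\delta})$ by a direct Fourier computation. The output $K_{2N}(\cdot, y') - K_{2N}(\cdot, y'+h)$ lies in $C^{0,\delta}$ with norm $O(|h|^{\delta})$, giving the pointwise bound $|K_{2N}(y,y'+h) - K_{2N}(y,y')| \lesssim |h|^{\delta}$ uniformly in $y,y',h$. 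Since $\eta(x,\gamma_t(x))$ is compactly supported, so is $K_{2N}$ in $\R^{\en}\times \R^{\en}$; integrating the pointwise bound over the compact $y'$-support gives \eqref{l1delta} (relabeling $2N$ as the ``$N$'' of the proposition).

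For part 2, the mean-zero condition $\int \psi = 0$ lets one write the kernel of $(T^*T)^N R$ as
\[ \tilde K(y,y') = \int \left[ K_N(y, y'+h) - K_N(y, y') \right] \psi(h) \, dh. \]
Now \eqref{l1delta} gives $\sup_y \int |\tilde K(y,y')| \, dy' \leq C \int |h|^{\delta} |\psi(h)| \, dh$, while the complementary Schur estimate $\sup_{y'} \int |\tilde K(y,y')| \, dy \lesssim \|\psi\|_1$ follows from Proposition \ref{lpbound} applied to $(T^*T)^N$, which ensures $\sup_{y'} \int |K_N(y,y')| \, dy < \infty$. Schur's test then yields
\[ \|(T^*T)^N R\|_{L^2 \to L^2}^2 \lesssim \|\psi\|_1 \int |h|^{\delta} |\psi(h)| \, dh. \]
Iterated Cauchy--Schwarz for the self-adjoint positive operator $A := T^*T$ gives $\|A g\|_2^{2^k} \leq \|A^{2^k} g\|_2 \|g\|_2^{2^k - 1}$, which combined with $\|TRf\|_2^2 \leq \|T^*T Rf\|_2 \|Rf\|_2$ yields $\|TRf\|_2^{2N} \leq \|(T^*T)^N Rf\|_2 \|Rf\|_2^{2N - 1}$ (for $N$ a power of two). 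Using $\|Rf\|_2 \leq \|\psi\|_1 \|f\|_2$ and taking the $2N$-th root produces the claimed inequality up to a factor-of-two in the exponent, which is absorbed by slightly enlarging $N$.

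The main obstacle is the first part: the CNSW result is deep but used as a black box, and the chief technical subtlety is transferring operator-level Sobolev smoothing to pointwise kernel modulus of continuity via duality for the self-adjoint $T^*T$ and Sobolev embedding into H\"{o}lder spaces. Part 2 is essentially mechanical once the kernel estimate is in hand.
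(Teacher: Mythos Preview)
Your Part 2 is a valid alternative to the paper's approach. The paper proves the operator log-convexity inequality
\[
\|R^*T^*TR\|_{2\to 2}\le \|R^*R\|_{2\to 2}^{1-\frac1N}\,\|R^*(T^*T)^NR\|_{2\to 2}^{\frac1N}
\]
for every integer $N\ge 1$ (not only powers of two), and then bounds $\|R^*(T^*T)^NR\|_{2\to 2}$ by interpolating its $L^\infty\to L^\infty$ and $L^1\to L^1$ norms using self-adjointness, obtaining $C\|\psi\|_1\int|h|^\delta|\psi(h)|\,dh$ with no square root. This yields \eqref{cancell1} with the stated exponent exactly, whereas your Schur test on $(T^*T)^NR$ followed by iterated Cauchy--Schwarz gives it with $2N$ replaced by $4N$, which as you say is harmless for the application.

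Part 1 has a genuine gap. The CNSW machinery you invoke is formulated for operators between spaces of the \emph{same} dimension: their $\gamma_t$ maps $\R^n$ to $\R^n$. Here $T$ maps $L^2(\R^{\en})$ to $L^2(\R^n)$, and in general $n\neq\en$, so CNSW does not apply directly to $T$; nor is $T^*T$ itself manifestly a CNSW-type Radon transform on $\R^{\en}$ to which one could apply their smoothing theorem. The paper confronts this head-on: when $n=\en$ it cites CNSW's kernel results (their Lemma~20.1 and Proposition~7.2) directly, and when $n\neq\en$ it pads the deficient side with dummy variables $s$ to build an extended operator $\overline{T}$ between equal-dimensional spaces, checks that the H\"ormander condition persists for the extended double fibration, and then shows that $(\overline{T}^*\overline{T})^N$ and $(T^*T)^N$ have the same kernel (up to a tensor factor in $s$). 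This reduction occupies most of the paper's proof and is not addressed in your sketch. Once that reduction is in place, your alternate route through Sobolev smoothing, embedding into H\"older spaces, and the $H^{-Ns_0}$ estimate for $\delta_{y'}-\delta_{y'+h}$ would also work and is a reasonable substitute for citing CNSW's kernel lemmas directly.
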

\begin{proof}
If $n = \en$, the existence of $K_N$ satisfying \eqref{l1delta} is a direct consequence (after suitable changes of variables) of Lemma 20.1 and Proposition 7.2 in the seminal work of Christ, Nagel, Stein, and Wainger \cite{cnsw1999}, so the only cases in question arise when $n \neq \en$. Thankfully, these cases are rather trivial consequences of the equal dimension case and can be proved by simply tacking on extra directions onto whichever side is ``deficient'' using trivial behavior in these new directions. 

When $n < \en$, the argument goes as follows. Consider the double fibration defined on an open subset of the point $(x_*,0,t_*) \in \R^{n} \times \R^{\en-n} \times \R^{\de}$ which is given by
\[ \overline{\pi}_1(x,s,t) := \gamma_t(x) \text{ and } \overline{\pi}_2(x,s,t) := (x,s). \]
The H\"{o}rmander condition must be satisfied at $(x_*,0,t_*)$ for these extended mappings simply because all vector fields annihilated by $d \pi_1$ and $d \pi_2$ are also annihilated by $d \overline{\pi}_1$ and $d \overline{\pi}_2$, respectively, when the former are transported to the new space by taking them to be constant in the new parameter $s$. By assumption, taking successive Lie brackets of these original vector fields will eventually yield a family of smooth vector fields which span all of the original tangent directions. The remaining directions (namely, the new directions $\partial/\partial s^i$) belong to the kernel of $d \overline{\pi}_1$, so every direction is spanned. It follows that when $\overline{T}$ is the operator defined by duality in terms of the formula
\[ \int  f(x,s) \overline{T} g(x,s)  dx ds = \int f(\overline{\pi}_2(x,s,t)) g(\overline{\pi}_1(x,s,t)) \eta(x,\gamma_t(x)) \varphi(s) dx dt ds \]
for smooth $\eta$ such that $\eta(x,\gamma_t(x))$ is supported near $(x_*,t_*)$ and smooth $\varphi$ such that $\varphi$ is supported sufficiently near zero, all sufficiently large $N$ have that $(\overline{T}^* \overline{T})^N$ is given by integration against a kernel which is uniformly in $L^1_\delta$ as in \eqref{l1delta}. 

Let $\varphi$ be a nonnegative smooth function of compact support sufficiently close to the origin in $\R^{\en-n}$ having the property that $\int |\varphi(s)|^2 ds = 1$. By the duality formula, $\overline{T} g(x,s) = \varphi(s) T g(x),$
where $T$ is the operator \eqref{theop} with cutoff function $\eta$. Computing the dual of both sides gives
\[ \overline{T}^* \overline{f}(y) = T^* \left( \int \overline{f}(\cdot,s) \varphi(s) ds \right) \]
for any suitable $\overline{f}$ defined on $\R^{n} \times \R^{\en-n}$,
by which it follows that $\overline{T}^* \overline{T} = T^* T$. Thus the kernel of $(T^*T)^N$ must be uniformly in $L^1_\delta$ for large $N$ because the same holds for $(\overline{T}^* \overline{T})^N$.  

The case $\en < n$ is similar but slightly more involved. In this case let $\overline{\pi}_1(x,s,t) := (\gamma_t(x),s)$ and $\overline{\pi}_2(x) := x$ and define $\overline{T}$ so that
\[ \int  f(x) \overline{T} g(x)  dx = \int f(\overline{\pi}_2(x,s,t)) g(\overline{\pi}_1(x,s,t)) \eta(x,\gamma_t(x)) \varphi(s) dx dt ds. \]
Just as before, the H\"{o}rmander property still holds in this extended case, meaning that the kernel of $(\overline{T}^* \overline{T})^N$ is uniformly in $L^1_\delta$ provided that $\eta$ and $\varphi$ are suitably localized. Similar to the situation above,
\[ \overline{T} \overline{g}(x) = T \left( \int \overline{g}(\cdot,s) \varphi(s) ds \right) \]
for any function $\overline{g}$ on $\R^{\en} \times \R^{n - \en}$.
Let $P \overline{g} (y,s) := \int \overline{g}(y,s') \varphi(s') ds' \varphi(s)$. This projection $P$ is self-adjoint and satisfies $\overline{T} P = \overline{T}$. It follows that $P (\overline{T}^* \overline{T}) P = \overline{T}^* \overline{T}$ and $P(\overline{T}^* \overline{T})^N P =  (\overline{T}^* \overline{T})^N$ for any $N \geq 1$, so the kernel $\overline{K}_N$ of $(\overline{T}^* \overline{T})^N$ must have the property that
\[ \begin{split} \overline{K}_N((y,s),(y',s')) & = \varphi(s) \varphi(s') \int \overline{K}_N((y,s''),(y',s'''')) \varphi(s'') \varphi(s''') ds'' ds''' \\ & =: K_N(y,y') \varphi(s) \varphi(s') \end{split}\]
for some $K_N$ which is independent of $s$ and $s'$.
The quantity $K_N$ must in fact be the kernel of $(T^* T)^N$ because $\overline{T}^* \overline{T} ( g(\cdot) \varphi(\cdot)) = \varphi T^* T g$, where $g (\cdot) \varphi(\cdot)$ indicates the function $g(y) \varphi(s)$ on $\R^{\en} \times \R^{n - \en}$. By induction on $N$,
$(\overline{T}^* \overline{T})^N ( g(\cdot) \varphi(\cdot)) = \varphi(\cdot) (T^* T)^N g$ and therefore
\[ \begin{split}
\varphi(s) (T^*T)^N g (y) & = \overline{T}^* \overline{T} ( g(\cdot) \varphi(\cdot)) (y) \\ & = \varphi(s) \int K_N(y,y') \varphi(s') g(y') \varphi(s') ds' dy' \\
& = \varphi(s) \int K_N(y,y') g(y') dy, \end{split}\]
which clearly forces $K_N$ to be the kernel of $(T^*T)^N$.
Because $\overline{K}_N((y,s),\cdot)$ is uniformly in $L^1_\delta$ for all pairs $(y,s)$, $K_N(y,y')$ must also be uniformly in $L^1_\delta$ for all $y$:
\begin{align*}
\int & |K_N(y,y'+h) - K_N(y,y')| dy' \\ & = \int |K_N(y,y'+h) - K_N(y,y')| |\varphi(s)|^2 |\varphi(s')|^2 dy' ds ds' \\
& = \int  |\overline{K}_N((y,s),(y'+h,s')) - \overline{K}_N((y,s),(y'+h,s')| |\varphi(s)| |\varphi(s')| dy' ds ds' \\
& \leq  ||\varphi||_\infty \int  |\overline{K}_N((y,s),(y'+h,s')) - \overline{K}_N((y,s),(y'+h,s')| |\varphi(s)| dy' ds ds' \\
& \leq ||\varphi||_1 ||\varphi||_\infty \sup_{s} \int |\overline{K}_N((y,s),(y'+h,s')) - \overline{K}_N((y,s),(y'+h,s')| dy' ds' \\ & \leq C |h|^\delta||\varphi||_1 ||\varphi||_\infty,
\end{align*}
This completes the proof of \eqref{l1delta}.

Assuming now that \eqref{l1delta} holds, Let $\psi$ be any continuous function on $\R^{\en}$ which is compactly supported and has integral zero, and let $R g := \psi * g$. The desired inequality \eqref{cancell1} is a consequence of an elementary argument combined with the observation that
\begin{equation} || R^* T^* T R ||_{2 \rightarrow 2}  \leq ||R^* R||_{2 \rightarrow 2}^{1 - \frac{1}{N}} ||R^* (T^* T)^{N} R||_{2 \rightarrow 2}^{\frac{1}{N}}. \label{spectral} \end{equation}
for all $N \geq 1$. The proof of this inequality is based on the simpler observation that for each integer $N \geq 0$, 
\begin{equation} ||R^* (T^* T)^{N + 1} R ||_{2 \rightarrow 2}  \leq \left( ||R^* (T^* T)^{N} R||_{2 \rightarrow 2}  ||R^* (T^* T)^{N+2} R||_{2 \rightarrow 2} \right)^{\frac{1}{2}}. \label{nmconv} \end{equation} When $N$ is even, \eqref{nmconv} holds because
\begin{align*}
||R^* (T^* T)^{N + 1} R ||_{2 \rightarrow 2} & \leq ||R^* (T^*T)^{\frac{N}{2}}||_{2 \rightarrow 2} ||(T^* T)^{\frac{N}{2}+1}R ||_{2 \rightarrow 2} \\ & =  ||(T^*T)^{\frac{N}{2}}R ||_{2 \rightarrow 2} ||(T^* T)^{\frac{N}{2}+1}R ||_{2 \rightarrow 2} \\
& = \left( ||R^* (T^* T)^{N} R||_{2 \rightarrow 2} ||R^* (T^* T)^{N+2} R||_{2 \rightarrow 2} \right)^{\frac{1}{2}},
\end{align*}
and when $N$ is odd, one has instead has \eqref{nmconv} because
\begin{align*}
||R^* (T^* T)^{N + 1} R ||_{2 \rightarrow 2} & \leq ||R^* (T^*T)^{\frac{N-1}{2}}T^* ||_{2 \rightarrow 2} ||T (T^* T)^{\frac{N-1}{2}+1}R ||_{2 \rightarrow 2} \\ & =  ||T (T^*T)^{\frac{N-1}{2}}R ||_{2 \rightarrow 2} ||T (T^* T)^{\frac{N-1}{2}+1}R ||_{2 \rightarrow 2} \\
& = \left( ||R^* (T^* T)^{N} R||_{2 \rightarrow 2} ||R^* (T^* T)^{N+2} R||_{2 \rightarrow 2} \right)^{\frac{1}{2}}.
\end{align*}
Because \eqref{spectral} will be trivially true if either $||R^* T^* T R||_{2 \rightarrow 2} = 0$ or if $||R^* R||_{2 \rightarrow 2} = 0$ (meaning that $R = 0$), without loss of generality, it may be assumed that neither of these norms vanish. Then \eqref{nmconv} implies that
 $||R^* (T^* T)^{N+2} R||_{2 \rightarrow 2} \neq 0$ when both $||R^* (T^* T)^{N} R||_{2 \rightarrow 2}$ and $||R^* (T^* T)^{N+1} R||_{2 \rightarrow 2}$ are nonzero. By induction, then, it may be assumed that $||R^* (T^* T)^{N} R||_{2 \rightarrow 2}$ is nonzero for each nonnegative integer $N$. In this case, the inequality \eqref{nmconv} implies that
\[ \frac{||R^* (T^* T)^{N+1} R||_{2 \rightarrow 2} }{||R^* (T^* T)^{N} R||_{2 \rightarrow 2}} \leq \frac{||R^* (T^* T)^{N+2} R||_{2 \rightarrow 2}}{||R^* (T^* T)^{N+1} R||_{2 \rightarrow 2}} \]
for each $N \geq 0$, which implies by induction on $N$ that
\[  \frac{||R^* T^* T R||_{2 \rightarrow 2} }{||R^* R||_{2 \rightarrow 2}} \leq \frac{||R^* (T^* T)^{N+1} R||_{2 \rightarrow 2}}{||R^* (T^* T)^{N} R||_{2 \rightarrow 2}} \]
for all $N \geq 0$, which then implies that 
\[ \left( \frac{||R^* T^* T R||_{2 \rightarrow 2} }{||R^* R||_{2 \rightarrow 2}} \right)^N \leq \frac{||R^* (T^* T)^{N} R||_{2 \rightarrow 2}}{||R^* R||_{2 \rightarrow 2}} \]
for all $N \geq 1$. Rearranging terms gives \eqref{spectral}. By the usual Hilbert space theory of $L^2$, it follows that
\begin{equation} ||TR||_{2 \rightarrow 2} \leq ||R||_{2 \rightarrow 2}^{1 - \frac{1}{N}} ||R^* (T^* T)^N R||_{2 \rightarrow 2}^{\frac{1}{2N}} \label{spectralroot} \end{equation}
for each $N \geq 1$.
Now for $N$ sufficiently large, the kernel smoothness condition \eqref{l1delta} implies that
\begin{align*}
 (T^* T)^N R g (y) & = \int K_N(y,y') \psi(y'-y'') g(y'') dy' dy'' \\
 & = \int K_N(y,y''+h) \psi(h) g(y'') dh dy'' \\
 & = \int \left[ K_N(y,y''+h) - K_N(y,y'') \right] \psi(h) g(y'') dh dy'',
 \end{align*}
 where the term $-K_N(y,y'')$ can be added without changing the integral because the integral of $\psi(h)$ over $h$ is zero. Thus when $K_N$ satisfies \eqref{l1delta}, it follows that
 \[ \begin{split} ||R^*(T^* T)^N R g||_\infty & \leq C ||g||_\infty ||R^*||_{\infty \rightarrow \infty} \int |h|^\delta |\psi(h)| dh \\ & \leq C ||g||_\infty \int |\psi(h)| dh  \int |h|^\delta |\psi(h)| dh. \end{split} \]
Because $R^* (T^* T)^N R$ is self-adjoint, duality and interpolation combine to give that $||R^* (T^* T)^N R||_{2 \rightarrow 2} \leq C ||\psi||_1 ||\, |\cdot|^\delta \psi||_1$. This inequality combined with \eqref{spectralroot} and the trivial inequality $||R||_{2 \rightarrow 2} \leq ||\psi||_1$ give \eqref{cancell1}.
\end{proof}

The main result in this section is Theorem \ref{approxthm} below. Informally, the theorem guarantees that if one can approximate the mapping $\gamma_t(x)$ within distance $2^{-j}$ by polynomial mappings $\gamma^j_t(x)$ for all positive integers $j$ in such a way that the polynomial Radon-like transforms are bounded from $L^{p_0}$ to $L^{q_0}$ with only slow growth of the norm, then the original $T$ must be bounded for pairs $(1/p,1/q)$ arbitrarily close to $(1/p_0,1/q_0)$.
\begin{theorem}
Suppose $\gamma_t(x)$ is as described above and that both of the following hypotheses hold: \label{approxthm}
\begin{enumerate}
\item The double fibration $\pi_1(x,t) := \gamma_t(x)$ and $\pi_2(x,t) := x$ satisfies the H\"{o}rmander condition at the point $(x_*,t_*)$.
\item There exists a nonnegative continuous function $\eta_0$ with $\eta_0(x,\gamma_t(x))$ compactly supported in the domain $U$ of $\phi$ which is nonvanishing at $(x_*,t_*)$ and mappings $\gamma_t^j(x) := (t,\phi^j(x,t))$ for each $j \geq 1$ defined on an open set $U_j$ containing the support of $\eta_0(x,\gamma_t(x))$ such that $|\gamma^j_t(x) - \gamma_t(x)| \leq 2^{-j}$ on $U_j$. Suppose also that the operators
\begin{equation} T^j f(x) := \int f(\gamma^j_t(x)) \eta_0(x,\gamma_t(x)) dt \label{tjdef} \end{equation}
have the property that for every $\epsilon > 0$, there is a constant $C_\epsilon$ such that
\begin{equation} || T^j f||_{L^{q_0}(\R^n)} \leq C_\epsilon 2^{\epsilon j} ||f||_{L^{p_0}(\R^{\en})} \label{epsbound} \end{equation}
for all continuous functions of compact support $f$ all $j$ sufficiently large (with threshold independent of $f$),  where $1 \leq p_0 < q_0 \leq \infty$.
\end{enumerate}
Then for all continuous $\eta$ such that $\eta(x,\gamma_t(x))$ is supported sufficiently close to $(x_*,t_*)$, the operator \eqref{theop}
maps $L^p$ to $L^q$ for all pairs $(1/p,1/q)$ in the interior of the triangle with vertices $(0,0)$, $(1,1)$, and $(1/p_0,1/q_0)$.
\end{theorem}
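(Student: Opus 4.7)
The strategy is to combine a Littlewood-Paley decomposition $T = \sum_{j \geq 0} T P_j$ on $L^p(\R^{\en})$ with two complementary bounds on each dyadic piece: an $L^2 \to L^2$ estimate with exponential decay in $j$ coming from the H\"{o}rmander condition (via Proposition \ref{cancelprop}), and an $L^{p_0} \to L^{q_0}$ estimate with arbitrarily slow exponential growth in $j$ obtained by approximating $T$ by the polynomial operators $T^j$. Riesz-Thorin interpolation between these two yields summable operator norms at the interpolated exponents, and a second interpolation against the trivial $L^r \to L^r$ bounds of Proposition \ref{lpbound} fills in the open triangle with vertices $(0,0),(1,1),(1/p_0,1/q_0)$. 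After a preliminary Riesz-Thorin step applied to the hypothesis, I may assume $p_0, q_0 \in (1,\infty)$ so that every $P_j$ is uniformly bounded on each relevant $L^{p_\theta}$.

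For the $L^2 \to L^2$ bound, take $P_j$ for $j \geq 1$ to be convolution with a Schwartz function $\psi_j$ at scale $2^{-j}$ with $\int \psi_j = 0$, $\|\psi_j\|_1 \lesssim 1$, and $\int |h|^\delta |\psi_j(h)|\,dh \lesssim 2^{-j \delta}$ for every $\delta>0$. Proposition \ref{cancelprop} applied with $R = P_j$ then gives $\|T P_j\|_{2 \to 2} \lesssim 2^{-\alpha j}$ for some $\alpha > 0$.

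The main technical step is to show $\|T P_j\|_{p_0 \to q_0} \lesssim_\epsilon 2^{\epsilon j}$ for every $\epsilon > 0$. For a parameter $k \geq j$ to be optimized, split $T P_j = T^k P_j + (T - T^k) P_j$. The first term obeys $\|T^k P_j\|_{p_0 \to q_0} \leq C_{\epsilon'} 2^{\epsilon' k}$ directly from \eqref{epsbound} together with the uniform $L^{p_0}$-boundedness of $P_j$. For the second, the fundamental theorem of calculus gives $P_j f(\gamma_t(x)) - P_j f(\gamma^k_t(x)) = (\gamma_t - \gamma^k_t)(x) \cdot \int_0^1 \nabla P_j f(\gamma^{k,s}_t(x))\,ds$ with $\gamma^{k,s} := (1-s)\gamma^k + s\gamma$, so $|(T - T^k) P_j f(x)| \leq 2^{-k} \int_0^1 \tilde T^{k,s} |\nabla P_j f|(x)\, ds$ for the Radon-like transform $\tilde T^{k,s}$ with mapping $\gamma^{k,s}$ and cutoff $|\eta_0|$. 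For large $k$ the maps $\gamma^{k,s}$ are $C^\infty$-close to $\gamma$ uniformly in $s$, so Proposition \ref{lpbound} provides uniform $L^{p_0} \to L^{p_0}$ and $L^{\infty} \to L^{\infty}$ bounds for $\tilde T^{k,s}$, and Bernstein's inequality $\|\nabla P_j f\|_{p_0} \lesssim 2^j \|f\|_{p_0}$ then gives $\|(T - T^k) P_j\|_{p_0 \to p_0} \lesssim 2^{j-k}$. Meanwhile, $\|P_j f\|_\infty \lesssim \|\psi_j\|_{p_0'} \|f\|_{p_0} \lesssim 2^{\en j/p_0} \|f\|_{p_0}$ yields $\|(T - T^k) P_j\|_{p_0 \to \infty} \lesssim 2^{\en j/p_0}$. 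Because the image of $(T - T^k) P_j$ is supported in a fixed compact set, H\"{o}lder on the output gives $\|(T - T^k) P_j\|_{p_0 \to q_0} \lesssim 2^{jA - kB}$ for explicit positive constants $A,B$ depending only on $\en, p_0, q_0$. Choosing $k = \beta j$ with $\beta$ large enough that $A - \beta B \leq \epsilon$, and then $\epsilon' = \epsilon / \beta$, assembles the two pieces into $\|T P_j\|_{p_0 \to q_0} \lesssim_\epsilon 2^{\epsilon j}$.

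With both bounds in hand, Riesz-Thorin interpolation at $(1/p_\theta, 1/q_\theta) := (1-\theta)(1/2,1/2) + \theta(1/p_0,1/q_0)$ yields $\|T P_j\|_{p_\theta \to q_\theta} \lesssim 2^{j(-\alpha(1-\theta) + \epsilon \theta)}$; for $\epsilon < \alpha(1-\theta)/\theta$ this is summable, and uniform $L^{p_\theta}$-boundedness of $P_j$ produces $T \colon L^{p_\theta} \to L^{q_\theta}$ for every $\theta \in (0,1)$. A second Riesz-Thorin step against the trivial $L^r \to L^r$ bounds from Proposition \ref{lpbound} then covers every pair in the open triangle with vertices $(0,0),(1,1),(1/p_0,1/q_0)$. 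The principal obstacle is the third paragraph: balancing the approximation error $(T - T^k)$ against the slow but nontrivial norm growth of $T^k$ forces the choice $k = \beta j$, which in turn demands a uniform $L^p$ theory for all intermediate mappings $\gamma^{k,s}$. This is also the essential reason the smooth case is fundamentally harder than the polynomial one, since the $C^3$-stability argument from Section \ref{suffsec} that gives $L^{p_b} \to L^{q_b}$ at the endpoint in the polynomial category is simply unavailable outside that category.
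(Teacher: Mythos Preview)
Your overall architecture—Littlewood–Paley decomposition, $L^2$ decay from Proposition~\ref{cancelprop}, subexponential growth at $(p_0,q_0)$, then two rounds of interpolation—matches the paper's. There is, however, one genuine gap, and your route to the $(p_0,q_0)$ bound is more circuitous than necessary.

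\textbf{The gap.} You assert that ``for large $k$ the maps $\gamma^{k,s}$ are $C^\infty$-close to $\gamma$'' and invoke Proposition~\ref{lpbound} to get uniform $L^{p_0}\to L^{p_0}$ bounds for $\tilde T^{k,s}$. But the hypothesis of Theorem~\ref{approxthm} only gives the $C^0$ estimate $|\gamma^k_t(x)-\gamma_t(x)|\le 2^{-k}$; nothing is assumed about derivatives of $\phi^k$, and the $L^1\to L^1$ part of Proposition~\ref{lpbound} genuinely uses a Jacobian lower bound. So the $L^{p_0}\to L^{p_0}$ bound for $\tilde T^{k,s}$ is unjustified as stated. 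This is fixable: the $L^\infty\to L^\infty$ bound for $\tilde T^{k,s}$ is trivial (it only needs $\int|\eta_0(x,\gamma_t(x))|\,dt$ to be uniformly bounded), and combining it with $\|\nabla P_j f\|_\infty \le \|\nabla\psi_j\|_{p_0'}\|f\|_{p_0}\lesssim 2^{j(1+\en/p_0)}\|f\|_{p_0}$ and H\"older on the compactly supported output already gives $\|(T-T^k)P_j\|_{p_0\to q_0}\lesssim 2^{j(1+\en/p_0)-k}$, which suffices for your balancing argument.

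\textbf{Comparison with the paper.} The paper avoids all of this by insisting that $\psi_j$ be \emph{supported in the ball of radius $2^{-j}$}. Then for $|\gamma_t(x)-\gamma^j_t(x)|\le 2^{-j}$ one has the pointwise kernel bound
\[
|\psi_j(\gamma_t(x)-y)-\psi_j(\gamma^j_t(x)-y)|\le \Psi_j(\gamma^j_t(x)-y)
\]
for a nonnegative bump $\Psi_j$ with $\|\Psi_j\|_1\lesssim 1$, which yields $\|TR_j\|_{p\to q}\le C_{\en}\,\|T^j\|_{p\to q}$ directly (this is the proposition containing \eqref{pqpayoff}). The comparison happens at the \emph{same} scale $j$, with a universal constant, using no regularity of $\gamma^j$ whatsoever and no interpolated maps $\gamma^{k,s}$. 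Your mean-value-plus-Bernstein detour forces $k=\beta j$ with $\beta$ large and buys nothing in return; the paper's compact-support trick is both shorter and strictly more robust.
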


The proof of Theorem \ref{approxthm} requires only a most rudimentary sort of Littlewood-Paley decomposition.  Let $\varphi$ be a radial Schwartz function on $\R^{\en}$ which is nonnegative, nonincreasing as a function of the radius, has integral $1$, and vanishes outside the ball of radius $1/2$.  Let $\psi(x) :=  \varphi(x) - 2^{-\en} \varphi(x/2)$, and for each $j \geq 1$, let $\psi_j(x) = 2^{\en j} \psi(2^j x)$. Each $\psi_j$ is a $C^\infty$ function of mean zero on $\R^{\en}$ which is supported on the ball of radius $2^{-j}$. It is easy to verify by classical means that 
\begin{equation} f = f * \varphi + \sum_{j=1}^\infty f * \psi_j \label{convoeq} \end{equation}
for any $f \in L^p(\R^{\en})$ with convergence pointwise almost everywhere and in the $L^p$ norm provided $1 < p < \infty$. For convenience, let $R_j f := f * \psi_j$ for each $j \geq 1$ and let $R_0 f := f * \varphi$.

The proof of Theorem \ref{approxthm} also relies on the following two very basic propositions.
\begin{proposition}
Let $\eta$ be a nonnegative continuous function on $\R^n \times \R^{\de}$ with $\eta(x,\gamma_t(x))$ compactly supported within the domain of $\phi$ and let $T$ and $\tilde{T}$ be Radon-like transforms defined by
\begin{equation} \begin{split} T f(x) & := \int_{\R^{\de}} f(\gamma_t(x)) \eta(x,\gamma_t(x)) dt,\\ \tilde{T} f(x) & := \int_{\R^{\de}} f(\tilde{\gamma_t}(x))  \eta(x,\gamma_t(x)) dt, \end{split} \label{talldef} \end{equation}
for all continuous $f$ of compact support on $\R^{\en}$, where
 $\gamma_t$ and $\tilde{\gamma_t}(x) := (t, \tilde \phi(x,t))$ are defined on the support on open sets large enough to contain the support of $\eta(x,\gamma_t(x))$ (so that the integrals \eqref{talldef} are well-defined) and take values in $\R^{\en}$ such that
\[ |\gamma_t(x) - \tilde{\gamma_t}(x)| \leq 2^{-j} \]
for all $(x,t)$ in the support of $\eta(x,\gamma_t(x))$. Then for any $p,q \in [1,\infty]$,
\begin{equation} || T R_j||_{p \rightarrow q} \leq C_{\en} ||\tilde{T}||_{p \rightarrow q}. \label{pqpayoff} \end{equation}
\end{proposition}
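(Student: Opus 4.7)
The plan is to unfold $R_j$ as a convolution, perform an $(x,t)$-dependent translation in the convolution variable that absorbs the discrepancy $\gamma_t(x) - \tilde{\gamma_t}(x)$, and then apply Minkowski's integral inequality against a $j$-uniform kernel majorant. This reduces the estimate for $T R_j$ to an estimate for $\tilde T$ applied to translates of $|f|$, and translation invariance of $L^p$ then finishes the argument.

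First I would write
\[ T R_j f(x) = \int_{\R^{\de}} \int_{\R^{\en}} f(\gamma_t(x) - h)\, \psi_j(h)\, \eta(x,\gamma_t(x))\, dh\, dt \]
and make the $(x,t)$-dependent substitution $h \mapsto h' := h - (\gamma_t(x) - \tilde{\gamma_t}(x))$ in the inner $h$-integral. The Jacobian is $1$, and the net effect is to replace $\gamma_t(x) - h$ by $\tilde{\gamma_t}(x) - h'$; the only cost is that $\psi_j(h)$ becomes $\psi_j(h' + (\gamma_t(x) - \tilde{\gamma_t}(x)))$, which still depends on $(x,t)$. Passing to absolute values and invoking the standing hypothesis $|\gamma_t(x) - \tilde{\gamma_t}(x)| \leq 2^{-j}$ on the support of $\eta(x,\gamma_t(x))$, I would dominate that factor by the $(x,t)$-independent majorant
\[ \Psi_j(h') := \sup_{|s| \leq 2^{-j}} |\psi_j(h' + s)|. \]
A rescaling $h' \mapsto 2^{-j} h'$, together with $\psi_j(y) = 2^{\en j} \psi(2^j y)$ and the compact support of $\psi$, shows that $||\Psi_j||_{L^1(\R^{\en})} \leq C_{\en}$ uniformly in $j$.

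Recognizing the resulting inner $dt$-integral as $\tilde T\bigl(|f(\cdot - h')|\bigr)(x)$ (thanks to the appearance of $\tilde{\gamma_t}(x) - h'$ and the unchanged cutoff $\eta(x,\gamma_t(x))$), Minkowski's integral inequality then yields
\[ ||T R_j f||_{L^q} \leq \int \Psi_j(h')\, ||\tilde T |f(\cdot - h')|\,||_{L^q}\, dh' \leq ||\tilde T||_{p \to q}\, ||f||_{L^p}\, ||\Psi_j||_1, \]
where translation invariance of $L^p$ absorbs the shift by $h'$ and nonnegativity of $\eta$ furnishes the pointwise bound $|\tilde T g| \leq \tilde T |g|$. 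This is precisely the inequality \eqref{pqpayoff} with $C_{\en} = ||\Psi_j||_1$.

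I do not anticipate a serious obstacle. The only mildly delicate step is verifying the uniform $L^1$ bound on $\Psi_j$, which is a routine consequence of the dilation structure of $\psi_j$ and the compact support of $\psi$; everything else is linearity, a unit-Jacobian change of variables, and Minkowski's inequality.
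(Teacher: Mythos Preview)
Your argument is correct and follows essentially the same strategy as the paper: both reduce $TR_j f$ to $\tilde T$ acting on $|f|$ convolved with a kernel whose $L^1$ norm is uniformly bounded in $j$. The only cosmetic difference is that the paper splits $TR_j f = \tilde T R_j f + (T-\tilde T)R_j f$ and bounds the difference via the Mean Value Theorem applied to $\psi_j$, whereas you absorb the discrepancy $\gamma_t(x)-\tilde\gamma_t(x)$ directly through the change of variables and the supremum majorant $\Psi_j$; both routes exploit the dilation structure $\psi_j = 2^{\en j}\psi(2^j\,\cdot)$ and the compact support of $\psi$ in exactly the same way.
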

\begin{proof}
As a side note before proving \eqref{pqpayoff}, observe that when $\tilde \eta(x,(t,y')) := \eta(x,\gamma_t(x))$, it follows that $\eta(x,\gamma_t(x)) = \tilde \eta(x,\tilde \gamma_t(x))$. Thus the form of the cutoff function inside $\tilde T$ is compatible with all previous assumptions  even though it is written as a function of $\gamma_t(x)$ rather than $\tilde \gamma_t(x)$. 

For any continuous $f$ of compact support, $T R_j f (x) - \tilde{T}R_j f(x)$ equals
\[ \int \left[ \int \left[ \psi_j(\gamma_t(x) - y) - \psi_j(\tilde{\gamma_t}(x) - y) \right] f(y) dy \right] \eta(x,\gamma_t(x)) dt. \]
By the Mean Value Theorem,
\[\left| \psi_j(\gamma_t(x) - y) - \psi_j(\tilde{\gamma_t}(x) - y) \right| \leq |\gamma_t(x) - \tilde{\gamma_t}(x)| \sup_{y'} | \nabla \psi_j(y')| \leq C_{\en} 2^{\en j} \]
for all $(x,t)$.
The difference $\psi_j(\gamma_t(x) - y) - \psi_j(\tilde{\gamma_t}(x) - y)$ must also vanish when $|\tilde{\gamma_t}(x)-y| \geq 2^{-j+1}$ because $|\gamma_t(x) - y| \geq |\tilde \gamma_t(y) - y| - |\tilde \gamma_t(y) - \gamma_t(y)| \geq 2^{-j+1} - 2^{-j} \geq 2^{-j}$ and consequently both  $\psi_j(\gamma_t(x) - y)$ and $\psi_j(\tilde{\gamma_t}(x) - y)$ are already zero. Letting $\Psi_j$ be the function which is equal to $C_{\en} 2^{\en j}$ on the ball of radius $2^{-j+1}$ and zero elsewhere, it follows that
\[ |T (f * \psi_j)(x) - \tilde{T}( f * \psi_j)(x)| \leq \tilde{T} (|f| * \Psi_j)(x) \]
for all $x$. Thus
\[ || T R_j f||_q \leq || \tilde{T} (f * \psi_j)||_q + ||\tilde{T} (|f| * \Psi_j)||_q \leq C_{\en} ||\tilde T||_{p \rightarrow q} ||f||_p \]
because convolution with $\psi_j$ and $\Psi_j$ are uniformly bounded on $L^p$ with a constant independent of $p$ and $j$ and map the continuous functions of compact support to themselves. When $j = 0$, this proof must be modified by replacing $\psi_j$ with $\varphi$, but as no cancellation properties of $\psi_j$ were just used, the same argument implies \eqref{pqpayoff} in this case as well. 
\end{proof}

\begin{proposition}
Under the same hypotheses as Proposition \ref{cancelprop}, if $\eta$ is smooth and $\eta(x,\gamma_t(x))$ is supported sufficiently near $(x_*,t_*)$, then there exists \label{sobprop}
exists $\epsilon > 0$ such that 
\begin{equation} || T R_j f||_2 \leq C 2^{-\epsilon j} ||f||_2 \label{sobolevpayoff} \end{equation}
for some constant $C$ which is independent of $f \in L^2(\R^{n})$ and $j \geq 0$.
\end{proposition}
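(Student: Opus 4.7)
The plan is to apply Proposition \ref{cancelprop} directly to the Littlewood-Paley pieces $R_j$ for $j \geq 1$, and handle the low-frequency piece $R_0$ by a trivial bound.

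For $j \geq 1$, observe that $\psi_j(x) = 2^{\en j}\psi(2^j x)$ is continuous, compactly supported (in the ball of radius $2^{-j}$), and has mean zero by construction (since $\int \psi = \int \varphi - \int 2^{-\en}\varphi(\cdot/2) = 1 - 1 = 0$). Hence $R_j f = f * \psi_j$ falls exactly within the scope of Proposition \ref{cancelprop}. The two quantities appearing there scale as follows: by the substitution $h' = 2^j h$,
\begin{equation*}
||\psi_j||_1 = ||\psi||_1 \qquad \text{and} \qquad \int |h|^\delta |\psi_j(h)| dh = 2^{-\delta j} \int |h'|^\delta |\psi(h')| dh'.
\end{equation*}
Choosing (and fixing) $N$ large enough that the kernel smoothness estimate \eqref{l1delta} holds (which is where the H\"ormander hypothesis and the sufficient localization of $\eta$ enter, via the cited result of Christ-Nagel-Stein-Wainger), the conclusion \eqref{cancell1} of Proposition \ref{cancelprop} yields
\begin{equation*}
||TR_j f||_2 \leq ||\psi||_1^{1-\frac{1}{2N}} \left[C \cdot 2^{-\delta j} \int |h|^\delta |\psi(h)| dh \right]^{\frac{1}{2N}} ||f||_2 \leq C' 2^{-\delta j/(2N)} ||f||_2,
\end{equation*}
so setting $\epsilon := \delta/(2N)$ gives the desired bound for all $j \geq 1$.

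The remaining case $j = 0$ is handled separately since $\varphi$ does not have mean zero. Here $R_0 f = f * \varphi$ is bounded on $L^2$ by $||\varphi||_1 = 1$ via Young's inequality, and by Proposition \ref{lpbound} (with $p=2$, obtained by interpolation between the $L^1$ and $L^\infty$ bounds proved there) the operator $T$ is bounded on $L^2$ provided $\eta(x,\gamma_t(x))$ is supported sufficiently close to $(x_*,t_*)$. Consequently $||TR_0 f||_2 \leq ||T||_{2\to 2} ||f||_2$, which is trivially dominated by $C \cdot 2^{-\epsilon \cdot 0} ||f||_2$ after enlarging $C$. The main (in fact the only nontrivial) obstacle is the verification that Proposition \ref{cancelprop} applies, which has already been established; after that, the argument is a purely quantitative scaling calculation and the two pieces combine to yield \eqref{sobolevpayoff} uniformly in $j \geq 0$.
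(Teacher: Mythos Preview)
Your proof is correct and follows essentially the same approach as the paper: apply \eqref{cancell1} with the scaling identity $\int |h|^\delta |\psi_j(h)|\,dh = 2^{-\delta j}\int |h|^\delta |\psi(h)|\,dh$ to obtain $\epsilon = \delta/(2N)$ for $j\geq 1$, and handle $j=0$ separately using $L^2$-boundedness of $T$ and $R_0$. The only difference is that you are slightly more explicit about invoking Proposition~\ref{lpbound} for the $L^2$-boundedness of $T$, whereas the paper simply asserts this fact.
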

\begin{proof}
This is an immediate consequence of \eqref{cancell1} applied to $R_j$, because
\[ \int |h|^\delta |\psi_j(h)| dh = \int |2^{-j} h|^\delta |\psi(h)| dh = C 2^{-j \delta} \]
for some constant $C$ independent of $f$ and $j$. It follows that $\epsilon = \delta / 2N$ for any sufficiently large $N$ so that \eqref{cancell1} holds. Although \eqref{cancell1} does not apply in the case $j = 0$, the fact that both $T$ and $R_0$ are bounded on $L^2$ gives \eqref{sobolevpayoff} instead.
\end{proof}

\begin{proof}[Proof of Theorem \ref{approxthm}]
Let $\eta_0$ be a continuous nonnegative cutoff function as hypothesized in Theorem \ref{approxthm} which yields Radon-like transforms 
\[ T^j f(x) := \int f(\gamma_t^j(x)) \eta_0(x,\gamma_t(x)) dt \]
satisfying \eqref{epsbound}. Let $\eta_1$ be any smooth nonnegative cutoff function satisfying the hypotheses of Proposition \ref{sobprop} and nonvanishing at $(x_*,\gamma_{t_*}(x_*))$ so that \eqref{sobolevpayoff} holds for the unperturbed Radon-like transform $T$ given by
\[  T f(x) := \int f(\gamma_t(x)) \eta_1(x,\gamma_t(x)) dt. \] 
Multiplying $\eta_1$ by a small positive constant and a smooth bump function if necessary, it is always possible to assume that $\eta_1(x,\gamma_t(x)) \leq \eta_0(x,\gamma_t(x))$ for all $x$ and $t$.  Let
\[ \tilde T^j f(x) := \int f(\gamma_t^j(x)) \eta_1(x,\gamma_t(x)) dt. \]
By \eqref{epsbound} and the inequality
\[ \left| \int f(\gamma_t^j(x)) \eta_1(x,\gamma_t(x)) dt \right| \leq \int |f(\gamma_t^j(x))| \eta_0(x,\gamma_t(x)) dt, \]
 it must be the case for each $\epsilon_1 > 0$ that $||\tilde T^j ||_{p_0 \rightarrow q_0} \leq C_{\epsilon_1 } 2^{\epsilon_1  j}$ for all $j$ sufficiently large. Then \eqref{pqpayoff} and \eqref{epsbound} imply that
\[ ||T R_j||_{p_0 \rightarrow q_0} \leq C'_{\epsilon_1 } 2^{\epsilon_1  j} \]
for some constant $C'_{\epsilon_1}$ which is independent of $j$ (we can assume that this inequality holds for all $j \geq 0$ because, when $j$ is below the finite threshold at which we have bounds for $||\tilde T^j||_{p_0 \rightarrow q_0}$, it is still the case that $R_j$ maps $L^{p_0}$ to $L^{q_0}$ and that $T$ is bounded on $L^{q_0}$). Interpolation with \eqref{sobolevpayoff} implies that
\[ ||T R_j||_{\tilde p \rightarrow \tilde q} \leq C''_{\epsilon_1} 2^{-j \tilde \epsilon} \]
where
\[ \frac{1}{\tilde p} := \frac{\epsilon}{\epsilon + 2 \epsilon_1 } \frac{1}{p_0} + \frac{2 \epsilon_1 }{\epsilon + 2 \epsilon_1 } \frac{1}{2} , \qquad  \frac{1}{\tilde q} := \frac{\epsilon}{\epsilon + 2 \epsilon_1 } \frac{1}{q_0} + \frac{2 \epsilon_1 }{\epsilon + 2 \epsilon_1 } \frac{1}{2},  \]
and
\[ \tilde \epsilon := \frac{\epsilon}{\epsilon + 2 \epsilon_1 } (- \epsilon_1 ) + \frac{2 \epsilon_1 }{\epsilon + 2 \epsilon_1 } \epsilon = \frac{\epsilon \epsilon_1 }{\epsilon + 2 \epsilon_1 } > 0. \]
Summing over $j$ gives that $T$ must be bounded from $L^{\tilde p}$ to $L^{\tilde q}$. As $\epsilon_1  \rightarrow 0$, the pair $(\tilde p, \tilde q)$ tends to $(p_0,q_0)$, so interpolating between this $L^{\tilde p} \rightarrow L^{\tilde q}$ inequality and the trivial $L^p$-boundedness of $T$ established by Proposition \ref{lpbound} implies boundedness of $T$ whenever $(1/p,1/q)$ belongs to the promised open triangle in $[0,1]^2$. Finally, if $\eta(x,y)$ is any merely continuous cutoff function with support of $\eta(x,\gamma_t(x))$ close enough to $(x_*,t_*)$ that $\eta_1(x,\gamma_t(x))$ is bounded below there, then the inequality $|\eta(x,\gamma_t(x))| \leq C |\eta_1(x,\gamma_t(x))|$ for some finite $C$ implies (just as was observed earlier) that the operator \eqref{theop} defined using this cutoff is bounded for all pairs $(p,q)$ that were just established for the cutoff function $\eta_1$.
\end{proof}

\subsection{Proof of sufficiency in the $C^\infty$ case}
\label{specapprox}

The proof of Theorem \ref{characterthm} can now be completed by applying Theorem \ref{approxthm}.

Suppose $Q$ at the point $(x_*,t_*)$ is nondegenerate. Proposition \ref{hormander} (from Section \ref{nondegensec}) implies that the double fibration $\pi_1(x,t) := \gamma_t(x)$ and $\pi_2(x,t) := x$ satisfies the H\"{o}rmander condition at $(x_*,t_*)$ using at most first commutators, so the first hypothesis of Theorem \ref{approxthm} holds. 
By Corollary \ref{stability} (in the latter part of Section \ref{nondegensec} as well), there exists a ball centered at $(x_*,t_*)$ and a constant $c > 0$ such that if $||\phi - \phi_j||_{C^3(B)}$ is sufficiently small, then
\begin{equation} \begin{split} \mathcal{Q}_j & |_{(x,t)} [ \{u_i\}_{i=1}^{\de}, \{v_i\}_{i=1}^k, \{w_i\}_{i=1}^d] \\ & \geq c |\det \{u_i\}_{i=1}^{\de}|^{\frac{s}{\de}} |\det \{v_i\}_{i=1}^k|^{\frac{s}{k}} |\det \{w_i\}_{i=1}^{d}|^{\frac{s}{d}} \end{split} \label{uniformapx} \end{equation} 
for all $(x,t) \in B$, where $Q_j$ is defined via \eqref{qdef} using $\phi_j$ and $s = dk/n$. Here the constant $c$ is uniform in $x,t$, and $j$.

By the multivariate Jackson's Theorem, e.g., Theorem 2 of Bagby, Bos, and Levenberg \cite{bbl2002}, if $\phi$ is class $C^m$ on a neighborhood of a fixed Euclidean ball $B$ centered at $(x_*,t_*) \in U$, then for each integer $D$ and each $i \in \{1,\ldots,k\}$, there is a polynomial $p^i_D(x,t)$ of degree at most $D$ such that
\begin{equation} \sup_{(x,t) \in B} | \partial^\alpha_{x,t} \phi^i(x,t) - \partial^\alpha_{x,t} p^i_D(x,t)| \leq C ||\phi^i||_{C^m(B)} D^{-m+|\alpha|} \label{papprox} \end{equation}
for all $|\alpha| \leq \min\{m,D\}$, where $C$ is a constant which is independent of $\alpha$ and $\phi$ (depending only on $m$, the ball $B$, and the usual parameters $n,\de$).  

Fix any positive $\epsilon$ and let $m$ be any integer larger than $\epsilon^{-1}$. For each nonnegative integer $j$, choose $D$ as small as possible so that $C ||\phi^i||_{C^m(B)} D^{-m} < k^{-1} 2^{-j}$ for each $i$, i.e., let 
\[ \max_i (C ||\phi^i||_{C^m(B)} k 2^j)^{1/m} < D \leq \max_i (C ||\phi^i||_{C^m(B)} k 2^j)^{1/m} + 1. \]
 Let $\phi_j(x,t)$ be the polynomial function $(\phi^1_D(x,t),\ldots,\phi^k_D(x,t))$ and $\gamma^j_t(x) := (t,\phi_j(x,t))$. By the choice of $D$, $|\phi(x,t) - \phi^j(x,t)| < 2^{-j}$ on the ball $B$. If this $D$ is by itself does not guarantee sufficient smallness of $||\phi - \phi_j||_{C^3(B)}$ to yield \eqref{uniformapx}, then it is possible to increase $D$ by at most a fixed amount independent of $j$ so that \eqref{uniformapx} does indeed hold. By the choice of $m$, it follows that $D \leq C' 2^{\epsilon j}$ for some constant $C'$ which is independent of $j$.  It follows that
\begin{equation} \sup_{(x,t) \in B} |\gamma_t(x) - \gamma_t^j(x)| \leq 2^{-j} \label{closeness} \end{equation}
as required by Theorem \ref{approxthm}. By Theorem \ref{fromtest} and \eqref{sublevgoal}, when $\eta$ is such that $\eta(x,\gamma_t(x))$ is supported sufficiently close to $(x_*,t_*)$, it will follow that
\[ T_j f(x) := \int f(\gamma^j_t(x)) \eta(x,\gamma_t(x)) dt \]
maps $L^{p_b}$ to $L^{q_b}$ inequality with norm that grows at most like some fixed power of $D$. This can be interpolated with the trivial $L^p$-boundedness of $T$ given by Proposition \ref{lpbound} (where one notes that the $L^p$ norm of each $T_j$ will be uniform in $j$ because the constants were shown in Proposition \ref{lpbound} to depend only on the first derivatives of $\phi$ and size of $\eta$ on its support) to conclude that 
\[ ||T_j f||_{L^q(\R^n)} \leq C D^M ||f||_{L^p(\R^{\en})} \leq C (C' 2^{\epsilon j})^M ||f||_{L^p(\R^{\en})} \]
for any pair $(1/p,1/q)$ in the triangle with vertices $(0,0), (1,1)$, and $(1/p_b,1/q_b)$. Because the triangle is open, Theorem \ref{approxthm} may now be invoked for any $(1/p_0,1/q_0)$ in the triangle, and taking the union over all such pairs establishes boundedness of $T$ at all interior points of this same triangle. This completes the proof of Theorem \ref{characterthm} in the $C^\infty$ case\footnote{Note also that a slight modification of this argument can be used to attain endpoint $L^{p_b}$--$L^{q_b}$ boundedness when $\phi$ has $||\phi||_{C^m(B)}^{1/m}$ bounded as a function of $m$. In this case, one can simply take $m > \ln j$ and take a limit as $j \rightarrow \infty$ to let $T_j$ approximate $T$.}.

\bibliography{mybib}

\end{document}